\newtheorem{theorem}{Theorem}[section]    
\newtheorem{lemma}[theorem]{Lemma}          
\newtheorem{proposition}[theorem]{Proposition}
\newtheorem{corollary}[theorem]{Corollary} 
\theoremstyle{definition}
\newtheorem{definition}[theorem]{Definition}
\newtheorem{remark-number}[theorem]{Remark}  
\newtheorem{question}[theorem]{Question} 
\newtheorem{observation}[theorem]{Observation} 
\newtheorem{example}[theorem]{Example}   
\numberwithin{equation}{section}
\newcommand{\NL}{NL}
\newcommand{\F}{\mathcal F_{ob} }
\newcommand{\Int}{\textrm{Int}}
\newcommand{\mC}{\mathcal{C}}
\newcommand{\mF}{\mathcal{F}}
\newcommand{\Z}{\mathbb{Z}}
\newcommand{\sgn}{{\tt sgn}}
\newcommand{\e}{\varepsilon}
\begin{document}

\title{Overtwisted discs in planar open books}
\author{Tetsuya Ito}
\address{Research Institute for Mathematical Sciences, Kyoto university, Kyoto, 606-8502, Japan}
\email{tetitoh@kurims.kyoto-u.ac.jp}
\urladdr{http://www.kurims.kyoto-u.ac.jp/~tetitoh/}
\author{Keiko Kawamuro}
\address{Department of Mathematics,   
The University of Iowa, Iowa City, IA 52242, USA}
\email{kawamuro@iowa.uiowa.edu}
\date{\today}

\begin{abstract}
Using open book foliations we show that an overtwisted disc in a planar open book can be put in a topologically nice position. As a corollary, we prove that a planar open book whose fractional Dehn twist coefficients grater than one for all the boundary components supports a tight contact structure. 
\end{abstract}

\maketitle

\section{Introduction}

There is a rigid dichotomy between {\em tight} and {\em overtwisted} contact structures on $3$-manifolds. 
All the contact structures are locally identical, so tightness and overtwistedness are global properties. 

Eliashberg's classification of  overtwisted contact structures \cite{el} states that overtwisted contact structures are classified by the homotopy types of $2$-plane fields. On the other hand, tight contact structures are more subtle, and  classification of tight contact structures is still open except for several simple cases including some Seifert fibered spaces.

It is often hard to determine whether a given contact structure is tight or overtwisted. 
Some types of fillability, such as (weak and strong) symplectic or Stein, imply tightness of the contact structures. Non-vanishing of Ozsv\'ath and Szab\'o's contact invariant shows tightness \cite{OS}. 
In convex surface theory, Giroux's criterion \cite{g0} is useful to find overtwisted discs, and Honda's state traversal method \cite{ho1} provides a way to prove tightness. 
In \cite{et} Eliashberg and Thurston use confoliations to prove that a contact structure obtained by $C^{0}$-small perturbation of a taut foliation is universally tight (cf. \cite{hkm2}).  

In this paper we give a new tightness criterion, Corollary~\ref{cor:tight}, using strong topological and combinatorial aspects of open book foliations \cite{ik1-1,ik1-2,ik2,ik3}. 
Here is our main theorem: 

\begin{theorem}
\label{theorem:main}
Let $(S,\phi)$ be a planar open book which supports an overtwisted contact structure. Then there exists a transverse overtwisted disc $D$ such that: 
\begin{description}
\item [(SE1)] All the valence $\leq 1$ vertices of the graph $G_{--}(D)$ are strongly essential. 
\end{description}
\end{theorem}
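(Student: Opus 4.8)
The plan is a minimality argument: start from an arbitrary transverse overtwisted disc and repeatedly simplify its open book foliation until (SE1) is forced. The guiding intuition is that a transverse overtwisted disc cannot be simplified all the way to the trivial (tight) disc, so some essential structure in $G_{--}(D)$ is unavoidable; the theorem asserts that on the outermost (valence $\leq 1$) part of $G_{--}(D)$ this essential structure can be taken to be robust.

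\smallskip
\noindent\emph{Step 1 (Setup and choice of $D$).} Since $(S,\phi)$ supports an overtwisted structure, a transverse overtwisted disc exists; put its open book foliation in general position with respect to the pages and, by the standard reduction available for planar open books, eliminate all $c$-circles. Among all transverse overtwisted discs $D$ whose open book foliation has no $c$-circles, fix one, again called $D$, minimizing the number of hyperbolic points of $\mathcal F_{ob}(D)$, breaking ties by minimizing the number of negative elliptic points. The consequence of this choice is that $D$ admits no elliptic--hyperbolic pair cancellation, even after a preliminary bounded sequence of $b$-arc foliation changes: such a cancellation is realized by an ambient isotopy of $D$ supported in the complement of the binding, it preserves all the defining properties of a transverse overtwisted disc (in particular the self-linking number, which is unchanged by deleting an elliptic--hyperbolic pair of the same sign), and it strictly decreases the number of hyperbolic points.

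\smallskip
\noindent\emph{Step 2 (A bad vertex forces a cancellation).} Suppose for contradiction that some vertex $v$ of $G_{--}(D)$ has valence $\leq 1$ and is not strongly essential. Because $v$ has at most one incident negative hyperbolic point, the negative $b$-arcs issuing from $v$ form a single controlled fan, and the chain of negative leaves joining $v$ to its at most one $G_{--}$-neighbor $w$ is short. I would invoke the characterization of the failure of strong essentiality at such a low-valence vertex to produce a page $S_t$ in which a $b$-arc at $v$ --- possibly the relevant $b$-arc of the $v$--$w$ chain, after a bounded number of $b$-arc foliation changes dictated by the fan --- is inessential. Planarity enters decisively here: in a planar page an inessential $b$-arc bounds an embedded disc $\Delta \subset S_t$, and after a standard cleanup $\Delta$ meets $D$ only along $\partial\Delta$, is disjoint from the rest of the binding, and contains no elliptic points of $\mathcal F_{ob}(D)$ in its interior. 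Then $\Delta$ furnishes exactly the data for an elliptic--hyperbolic cancellation removing the negative elliptic point $v$ together with a negative hyperbolic point --- which Step 1 forbids. If instead $v$ has valence $0$, the $v$--$w$ chain degenerates and one checks that a valence-$0$, non-strongly-essential negative elliptic point is removable by the same mechanism, cancelling against the negative hyperbolic point that closes up its fan. Either way we contradict the minimality of $D$, so $D$ satisfies (SE1).

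\smallskip
\noindent\emph{Main obstacle.} The crux is Step 2: converting the negation of ``strongly essential'' at a valence-$\leq 1$ vertex into an honest inessential $b$-arc in a single page, and then verifying that the induced modification of $D$ is a legitimate open-book-foliation move that (i) stays within transverse overtwisted discs, (ii) does not reintroduce $c$-circles or otherwise raise the complexity fixed in Step 1, and (iii) strictly lowers that complexity. This requires a finite case analysis organized by the local picture around $v$ --- where the separatrices of the incident negative hyperbolic point land ($\partial D$, back at $v$, or at $w$), and how the positive hyperbolic points bordering the fan at $v$ sit relative to $\Delta$. Planarity of $S$ is precisely what makes this case analysis finite and forces each $\Delta$ to be an embedded, binding-free, elliptic-point-free disc after cleanup, so that the cancellation goes through; this is also where one sees why the hypothesis that the open book is planar cannot be dropped.
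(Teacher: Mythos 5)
The core idea — induction on some complexity of a transverse overtwisted disc, removing a non-strongly-essential valence $\leq 1$ vertex at each step — matches the paper's strategy. But your Step~2 has a genuine gap that cannot be patched by ``a standard cleanup,'' and your choice of complexity (number of hyperbolic points, then negative elliptic points) would in fact fail to decrease under the move that is actually available.

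The gap: you assert that if $v$ is non-strongly essential then ``after a standard cleanup $\Delta$ meets $D$ only along $\partial\Delta$\ldots and contains no elliptic points of $\mathcal F_{ob}(D)$ in its interior,'' so that an elliptic--hyperbolic cancellation at $v$ goes through. This conflates \emph{non-essential} with \emph{non-strongly essential}. After applying the standing reduction (\cite[Theorem 3.2]{ik2}) one may assume $\mathcal F_{ob}(D)$ is essential, and then precisely because it is essential, the disc $\Delta \subset S_t$ cobounded by a boundary-parallel $b$-arc at $v$ \emph{must} contain leaves of $D$ in its interior, hence elliptic points on $\partial'\Delta$; there is no isotopy of $D$ supported near a single page that clears $\Delta$, because those leaves belong to a portion of $D$ that wanders globally through the open book. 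Emptying $\Delta$ requires the global surgery of Section~\ref{sec:niceposition}: one relocates the entire leaf box $A=\mathrm{Int}(\Delta)\cap D$ by introducing $m$ copies of $A$ (one per positive $ab$-tile at $v$) together with many new elliptic \emph{and} hyperbolic points of both signs (cf.\ Proposition~\ref{prop:foliation}). Consequently the move available here typically \emph{increases} the number of hyperbolic points and the number of negative elliptic points, so a disc minimizing those counts gives no contradiction; this is exactly why the paper introduces the nesting-level complexity $\mathfrak{C}(D)=(|\mathcal V_D|,\mathcal C_D)$ and proves in Proposition~\ref{prop:complexity} that \emph{that} quantity drops. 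Your tie-breaking rule does not rescue the argument.

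You also misattribute the role of planarity. A non-strongly essential $b$-arc cobounds a disc with the binding on any surface; planarity is not needed for $\Delta$ to exist or be embedded. Planarity is used in Lemma~\ref{lemma:key} to establish property \textbf{(P2)}: that none of the positive elliptic points $\Omega_i$ lie on $\partial'\Delta$ (because otherwise the $b$-arc $b_{t_i+\e}$ would be separating, impossible on a planar page after \textbf{(P1)}). This is what makes the leg-sliding in Step~2 of the construction of $D_*$ possible. Without \textbf{(P2)} the relocated copies of $A$ could land back on $\partial'\Delta$ and the construction would not terminate (see Remark~\ref{rmk non-planar}). So the finiteness you hope to get ``from a case analysis made finite by planarity'' is actually secured by a nesting-level induction that has nothing to do with making $\Delta$ automatically leaf-free.
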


The graph $G_{--}(D)$ and strongly essential vertices are defined in Section \ref{sec:review}. Roughly speaking, Theorem~\ref{theorem:main} shows that we can put a transverse overtwisted disc so that it intersects each page of the open book  in some nice way. 
In the theory of Haken $3$-manifolds, one uses essential surfaces to cut $3$-manifolds and study the structure and properties of the manifolds. 
We apply this classical scheme to contact $3$-manifolds and surfaces admitting open book foliations and analyze topological features of the open books.

For a boundary component $C \subset \partial S$ let $c(\phi,C)$ denote the {\em fractional Dehn twist coefficient} (FDTC) of $\phi$ with respect to $C$. See \cite{hkm1} for the definition. 
In \cite[Theorem 1.1]{hkm1} Honda, Kazez and Mati\'c prove that an open book $(S, \phi)$ supporting a tight contact structure implies that $\phi$ is right-veering, in particular, $c(\phi,C)\geq 0$ for all the boundary components $C$ of $S$. The next corollary asserts the converse direction of the theorem under some assumptions on FDTC.

\begin{corollary}
\label{cor:tight}
Let $(S,\phi)$ be a planar open book. If $c(\phi,C)>1$ for all the boundary components $C \subset \partial S$ then $(S,\phi)$ supports a tight contact structure.
\end{corollary}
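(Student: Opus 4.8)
The plan is to prove the contrapositive of Corollary~\ref{cor:tight}: if $(S,\phi)$ supports an overtwisted contact structure, then $c(\phi,C)\le 1$ for \emph{some} boundary component $C\subset\partial S$ (then Eliashberg's dichotomy upgrades ``not overtwisted'' to ``tight''). Applying Theorem~\ref{theorem:main} to $(S,\phi)$, we obtain a transverse overtwisted disc $D$ satisfying \textbf{(SE1)}, so everything reduces to extracting, from the strongly essential valence $\le 1$ vertices of $G_{--}(D)$, an upper bound of $1$ on a fractional Dehn twist coefficient.

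First I would check that \textbf{(SE1)} actually carries information, i.e.\ that $G_{--}(D)$ has a valence $\le 1$ vertex. A transverse overtwisted disc always has negative elliptic points in its open book foliation $\mathcal F_{ob}(D)$, so $G_{--}(D)\neq\emptyset$; and a structural feature of the open book foliation of a disc --- the graphs $G_{\pm\pm}$ embedded in $D$ contain no cycle, which one reads off from the region decomposition (cf.\ \cite{ik1-1,ik1-2}) --- then forces a nonempty $G_{--}(D)$ to have a vertex $v$ of valence $0$ or $1$. (The valence $0$ case, an isolated negative elliptic point, is treated separately; it is either excluded for transverse overtwisted discs or handled by the same estimate below, only more directly.) By \textbf{(SE1)}, $v$ is strongly essential.

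The quantitative heart of the argument is that a strongly essential negative elliptic point of small valence forces some $c(\phi,C)$ to be small. Since $v$ has valence $\le 1$, the union $R_v$ of the regions of $\mathcal F_{ob}(D)$ incident to $v$ is a sub-disc of $D$ of very restricted combinatorial type, bounded by a single arc of $\partial D$ together with at most one pair of stable/unstable separatrices through one negative hyperbolic point. Pushing $R_v$ into a single page $S_t$ produces a describing arc realizing the page curve $\ell_v$ through $v$; here the planarity hypothesis enters, since an essential simple closed curve on a planar page is boundary-parallel, so $\ell_v$ is isotopic to a copy of some boundary component $C\subset\partial S$. Tracking how the monodromy $\phi$ transports $\ell_v$ around the binding across $R_v$, and using that $R_v$ embeds in the \emph{disc} $D$ --- so the describing arc cannot wind around the binding too far --- one obtains $c(\phi,C)\le 1$, contradicting the hypothesis that all fractional Dehn twist coefficients exceed $1$. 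The precise inequality is exactly of the type furnished by the fractional-Dehn-twist estimates for essential open book foliations in \cite{ik2}, specialized to valence $\le 1$.

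I expect this last step to be the main obstacle: converting the combinatorial statement ``$v$ is strongly essential and has valence $\le 1$ in a transverse overtwisted disc'' into the numerical bound $c(\phi,C)\le 1$ requires careful bookkeeping of the twisting accumulated by the describing arc of $R_v$ over one full turn around the binding, matched against the normalization of the FDTC in \cite{hkm1}. This is precisely where strong essentiality, rather than plain essentiality, is indispensable: essentiality alone only prevents $\ell_v$ from being nullhomotopic, whereas strong essentiality guarantees that $\phi$ genuinely displaces $\ell_v$, which is what promotes the a priori bound $c(\phi,C)\ge 0$ to $c(\phi,C)\le 1$. A minor additional point is to confirm that the boundary component produced by the estimate is a component of $\partial S$ (not of $\partial D$), which is immediate once $\ell_v$ is identified with a boundary-parallel curve on the planar page.
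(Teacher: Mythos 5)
Your proposal has the same architecture as the paper's proof: argue the contrapositive, apply Theorem~\ref{theorem:main} to get a transverse overtwisted disc $D$ satisfying \textbf{(SE1)}, note that $G_{--}(D)$ is by definition a nonempty connected tree and hence has a leaf $v$ (so the condition is non-vacuous), and then convert ``$v$ is a strongly essential negative elliptic point of valence $\leq 1$'' into the bound $c(\phi,C)\leq 1$, contradicting the hypothesis. The ``quantitative heart'' you describe is precisely Lemma~\ref{lemma:estimate} (cited from \cite{ik2}), which you correctly locate but describe only vaguely: applying part~(2) with $\sgn(v)=-1$ and with $n\leq 1$ (the number of negative hyperbolic points attached to $v$, i.e.\ the valence of $v$ in $G_{--}$) immediately gives $c(\phi,C)\leq n\leq 1$.

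One substantive correction: planarity is \emph{not} used in this final estimate. Your sketch --- pushing $R_v$ into a page, extracting a closed page curve $\ell_v$, and appealing to the fact that essential simple closed curves on a planar surface are boundary-parallel --- does not correspond to what Lemma~\ref{lemma:estimate} actually does (which concerns b-\emph{arcs} being non-boundary-parallel, and holds on arbitrary pages). Planarity is consumed entirely inside Theorem~\ref{theorem:main} (in particular in Lemma~\ref{lemma:key}, which guarantees property \textbf{(P2)}); once \textbf{(SE1)} is available the FDTC estimate is surface-agnostic. This is a conceptual muddle rather than a fatal gap, since you also cite the estimate from \cite{ik2}, which is what actually carries the proof.
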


It is interesting to compare Corollary \ref{cor:tight} with the result of Colin and Honda in \cite{ch}. They show that for a (not necessarily planar) open book $(S,\phi)$ with pseudo-Anosov monodromy, if $c(\phi,C_i)\geq \frac{k}{n_i}$ ($k\geq 2$) for every boundary component $C_i$ of $S$, where $n_i$ is the number of prongs around $C_i$ of the transverse measured (stable) foliation for $\phi$, then $(S,\phi)$ supports a universally tight contact structure (\cite{ch} treats the connected binding case, and by Baldwin and Etnyre \cite[Theorem 4.5]{be} the same result holds for the general case). 
They show tightness by proving non-vanishing of the contact homology. 
Note that the foundation of contact homology requires hard analysis and is geometric in the sense that its definition uses Reeb vector fields and contact forms. 
On the other hand, our argument using open book foliations is topological by nature. We do not need to determine the Nielsen-Thurston types of monodromies. We just add a topological assumption that the page surface is planar.

\begin{remark-number}   
\label{rem:bp}
Let $S=S_{0, 4}$ be a sphere with four discs removed. Call the boundary circles $A,B,C$ and $D$. Let $E$ be a simple closed curve in $S$ that separates $A, B$ from $C, D$. For $h,i,k>0$, let $\Phi_{h,i,k}=T_{A}^{h}T_{B}^{i}T_{C}T_{D}T_{E}^{-k-1}$, where $T_{X}$ denotes the right-handed Dehn twist along $X \in \{A,B,C,D,E\}$.
In \cite[Theorem 4.1]{ik1-2} we show that the open book $(S,\Phi_{h,i,k})$ is non-destabilizable and supports an overtwisted contact structure. 
The FDTCs of $\Phi=\Phi_{h,i,k}$ are 
\[ (c(\Phi,A),c(\Phi,B),c(\Phi,C),c(\Phi,D))=(h,i,1,1). \]
Moreover,
James Conway, John Etnyre, Amey Kaloti, and Dheeraj Kulkarni found a non-destabilizable open book $(S,\Psi)$ with $\Psi=T_{A}T_{B}^{2}T_{C}^{2}T_{D}^{3}T_{E}^{-2}$ supporting an overtwisted contact structure.
The FDTCs of $\Psi$ are
\[(c(\Psi,A),c(\Psi,B),c(\Psi,C),c(\Psi,D))=(1,2,2,3). \]
Thus the conditions in Corollary \ref{cor:tight} are best possible even if we add a reasonable assumption that $(S,\phi)$ is non-destabilizable.
\end{remark-number}

\section{Review of open book foliations} 
\label{sec:review}

In this section we summarize definitions and properties of open book foliations used in this paper.  For details, see \cite{ik1-1,ik2,ik3}. The idea of open book foliations originally came from Bennequin's work \cite{Ben} and Birman-Manasco's braid foliations  \cite{BM4, BM2, BM5, BM1, BM6, BM3, BM7, bm1, bm2}.

Let $S=S_{g, r}$ be a genus $g$ surface with $r(>0)$ boundary components, and $\phi \in {\rm Diff}^{+}(S, \partial S)$ an orientation preserving differomorphism of $S$ fixing the boundary $\partial S$ point-wise.
Suppose that the open book $(S,\phi)$ supports a closed oriented contact $3$-manifold $(M,\xi)$ via the Giroux correspondence \cite{g}. The manifold $M$ is often denoted by $M_{(S, \phi)}$. 
Let $B$ denote the {\em binding} of the open book and $\pi:M \setminus B \rightarrow S^{1}$ the fibration whose fiber $S_{t} := \pi^{-1}(t)$ is called a {\em page}.

Let $F \subset M_{(S, \phi)}$ be an embedded, oriented surface possibly with boundary. If $F$ has boundary we require that $\partial F$ is a closed braid with respect to $(S, \phi)$, that is, $\partial F$ is positively transverse to every page. Up to isotopy of $F$ fixing $\partial F$ we may put $F$ so that the singular foliation given by the intersection with the pages
\[ \F(F)=\left\{ F \cap S_t \ | \ t \in [0, 1] \right\} \] 
admits the following conditions {\bf ($\mF$ i)}--{\bf ($\mF$ iv)}, see \cite[Theorem 2.5]{ik1-1}. 
We call $\F(F)$ an {\em open book foliation} on $F$. 
\begin{description}
\item[($\mF$ i)] 
The binding $B$ pierces the surface $F$ transversely in finitely many points. 
Moreover, $v \in B \cap F$ if and only if there exists a disc neighborhood $N_{v} \subset \Int(F)$ of $v$ on which the foliation $\F(N_v)$ is radial with the node $v$, see Figure~\ref{fig:sign}-(1, 2). The leaves meeting at $v$ belong to distinct pages. We call $v$ an {\em elliptic} point. 
\item[($\mF$ ii)] 
The leaves of $\F(F)$ along $\partial F$ are transverse to $\partial F$. 
\item[($\mF$ iii)] 
All but finitely many pages $S_{t}$ intersect $F$ transversely.
Each exceptional fiber is tangent to $F$ at a single point $\in\Int(F)$.
In particular, $\F(F)$ has no saddle-saddle connections.
\item[($\mF$ iv)] 
All the tangencies of $F$ and pages are of saddle type, see Figure~\ref{fig:sign}-(3, 4). 
We call them {\em hyperbolic} points.
\end{description}

A leaf $l$ of $\F(F)$, a connected component of $F \cap S_t$, is called {\em regular} if $l$ does not contain a tangency point, and {\em singular} otherwise.
Regular leaves are classified into the following three types:
\begin{enumerate}
\item[a-arc]: An arc whose one of its endpoints lies on $B$ and the other lies on $\partial F$.
\item[b-arc]: An arc whose endpoints both lie on $B$.
\item[c-circle]: A simple closed curve.
\end{enumerate}

We say that an elliptic point $v$ is {\em positive} (resp. {\em negative}) if the binding $B$ is positively (resp. negatively) transverse to $F$ at $v$.
The sign of the hyperbolic point $h$ is {\em positive} (resp. {\em negative}) if the positive normal direction of $F$ at $h$ agrees (resp. disagrees) with the direction of $t$.
We denote the sign of a singular point $x$ by $\sgn(x)$.
See Figure \ref{fig:sign}, where we describe an elliptic point by a hollowed circle with its sign inside, a hyperbolic point by a black dot with the sign indicated nearby, and positive normals  $\vec n_F$ to $F$ by dashed arrows. 
\begin{figure}[htbp]
 \begin{center}
\SetLabels
(0.03*0.95) (1)\\
(0.58*0.95) (2)\\
(0.03*0.48) (3)\\
(0.58*0.48) (4)\\
(0.2*0.92) $B$\\
(0.72*0.92) $B$\\
(0.08*0.62) $\vec n_F$ \\
(0.6*0.62) $\vec n_F$ \\
(0.19*0.23) $\vec n_F$ \\
(0.71*0.23) $\vec n_F$ \\
(0.75*0.8) $F$\\
(0.23*0.8) $F$\\
(0.03*0.2) $t$\\
(0.55*0.2) $t$\\
(0.22*0.62) $t$\\
(0.74*0.62) $t$\\
(0.24*0.33) $F$\\
(0.77*0.33) $F$\\
\endSetLabels
\strut\AffixLabels{\includegraphics*[scale=0.5, width=120mm]{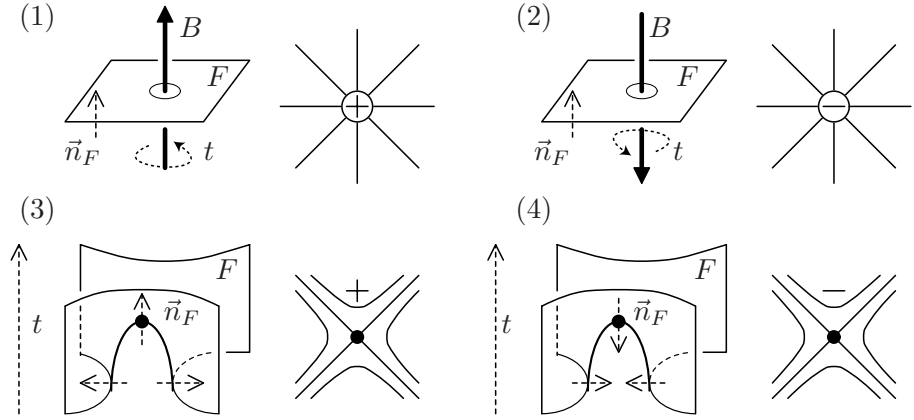}}
 \end{center}
 \caption{Signs of singularities and normal vectors $\vec n_F$.}
  \label{fig:sign}
\end{figure}

The neighborhoods of hyperbolic points as depicted in Figure ~\ref{region} are called {\em regions}. 
There are six types of regions according to the types of nearby regular leaves of the hyperbolic points; 
$aa$-tile, $ab$-tile, $bb$-tile, $ac$-annulus, $bc$-annulus, and $cc$-pants.
\begin{figure}[htbp]
\begin{center}
\SetLabels
(0.15*0.55)  $aa$-tile\\
(0.5*0.55)    $ab$-tile\\
(0.84*0.55)  $bb$-tile\\
(0.15*0.04)  $ac$-annulus\\
(0.5*0.04)    $bc$-annulus\\
(0.84*0.04)  $cc$-pants\\
\endSetLabels
\strut\AffixLabels{\includegraphics*[scale=0.5, width=90mm]{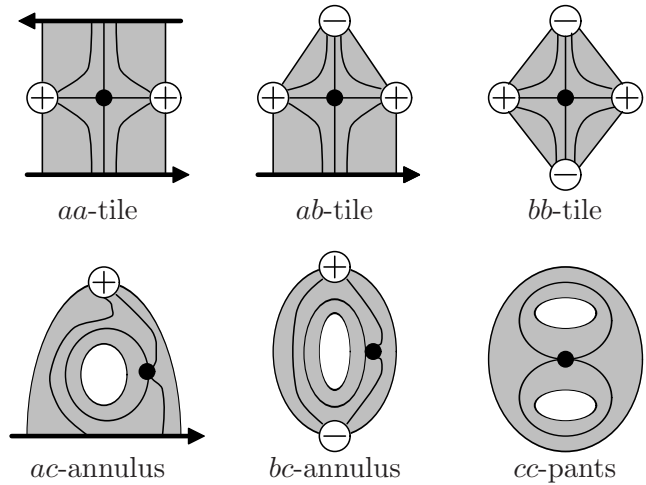}}
\caption{Six types of regions.}\label{region}
\end{center}
\end{figure}

For a region $R$ of type aa, ac, bc, or cc, some components of the boundary $\partial R$ are possibly identified in $F$ (see Figure \ref{degtiles}). In such case we say that $R$ is {\em degenerate}. Certain degenerate regions cannot exist because of {\bf ($\mathcal{F}$ i)}. 
We denote by $\sgn(R)$ the sign of the hyperbolic point contained in the region $R$. The surface $F$ is decomposed into the union of regions whose interiors are disjoint \cite[Proposition 2.15]{ik1-1}.
\begin{figure}[htbp]
\begin{center}
\SetLabels
(0.1*0.54) {\small identified}\\
(0.52*0.54) {\small identified}\\
(0.88*0.54) {\small identified}\\
(0.18*0.02) {\small Degenerate aa-tile}\\
(0.53*0.02) {\small Degenerate bc-annulus}\\
(0.88*0.03) {\small forbidden}\\
(0.03*0.95) {(i)}\\
(0.36*0.95) {(ii)}\\
(0.68*0.95) {(iii)}\\
\endSetLabels
\strut\AffixLabels{\includegraphics*[width=100mm]{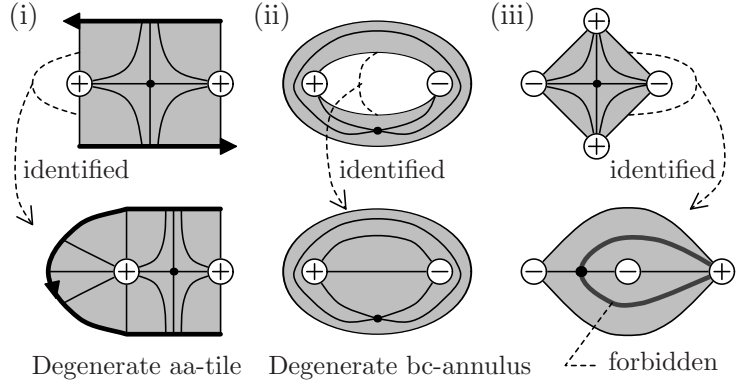}}
\caption{(i, ii) Degenerate regions. (iii) A forbidden region.}\label{degtiles}
\end{center}
\end{figure}

We will often take the following homotopical properties of leaves into account.

\begin{definition}
Let $b$ be a b-arc in $S_{t}$. We say that: 
\begin{enumerate}
\item
$b$ is {\em essential} if $b$ is not boundary-parallel in $S_{t}\setminus(S_{t} \cap \partial F)$, 
\item
$b$ is {\em strongly essential} if  $b$ is not boundary-parallel in $S_t$,
\item 
$b$ is {\em separating} if $b$ separates the page $S_t$ into two connected components.
\end{enumerate}
For a b-arc the conditions `{\em boundary parallel in $S_t$}' and `{\em non}-strongly essential' are equivalent. 
\end{definition}

\begin{definition}
An elliptic point $v$ is called {\em strongly essential} if every $b$-arc that ends at $v$ is strongly essential.
An open book foliation $\F(F)$ is called ({\em strongly}) {\em essential} if all the $b$-arcs are (strongly) essential. 
\end{definition}

The next lemma may be one of the most useful results about open book foliations, which claims that the existence of strongly essential elliptic points gives an estimate of the FDTC. 
See \cite[Section 5]{ik2} for further relationships between open book foliations  and the FDTC. 

\begin{lemma}\cite[Lemma 5.1]{ik2}
\label{lemma:estimate}
Let $v$ be an elliptic point of $\F(F)$ lying on a binding component $C \subset \partial S$. 
Assume that $v$ is strongly essential and there are no a-arcs around $v$.
Let $p$ $($resp. $n)$ be the number of positive $($resp. negative$)$ hyperbolic points that are joined with $v$ by a singular leaf. 
\begin{enumerate}
\item If $\sgn(v)= +1$ then $-n \leq c(\phi,C) \leq p.$
\item If $\sgn(v) = -1$ then $-p \leq c(\phi,C) \leq n.$
\end{enumerate} 
\end{lemma}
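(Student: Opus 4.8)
The plan is to track the one–parameter family of $b$–arcs emanating from $v$ as the page parameter $t$ runs once around $S^1$, and to read off $c(\phi,C)$ from the net rotation of this family near $C$. Since $v$ is strongly essential and no $a$–arcs meet $v$, the unique leaf of $\mathcal F_{ob}(F)$ through $v$ in a page $S_t$ is a strongly essential $b$–arc for all but finitely many $t$; call it $\beta_t$. Its isotopy class in $S_t$ is locally constant and changes only as $t$ crosses one of the critical values $t_1<\dots<t_{p+n}$ at which a page is tangent to $F$ at a hyperbolic point $h_i$ joined to $v$ by a singular leaf; write $\varepsilon_i=\sgn(h_i)$. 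Using $S_1=S_0$ and $F\cap S_1=\phi(F\cap S_0)$, the family $\{\beta_t\}_{t\in[0,1]}$ is a path from $\beta_0$ to $\phi(\beta_0)$ through strongly essential properly embedded arcs of $S_0$ with an endpoint at $v\in C$.

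The crux is the local analysis of a single crossing. As $t$ passes $t_i$, the arc $\beta_{t_i-\varepsilon}$ is replaced by $\beta_{t_i+\varepsilon}$ by one band move supported in the region of $h_i$, which is one of the six model regions of Figure~\ref{region}. I would show, by inspecting these models against the normal–orientation conventions of Figure~\ref{fig:sign}, that when $\sgn(v)=+1$ this move rotates the germ of $\beta$ at the elliptic point $v$ along $C$ by an amount lying in $[0,1]$ Dehn–twist units when $\varepsilon_i=+1$ and in $[-1,0]$ when $\varepsilon_i=-1$ (when $\sgn(v)=-1$ the two cases are interchanged). Two things must be checked: the \emph{monotonicity}, that same–sign hyperbolic points always rotate $\beta$ the same way — this is forced by which side of $\beta_{t_i-\varepsilon}$ the band is attached to, which is in turn determined by $\sgn(h_i)$ together with $\sgn(v)$; and the \emph{unit bound}, that a single band move across one (topologically elementary) region changes the class of $\beta$ near $C$ by strictly less than a full twist. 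This is where the hypotheses enter: ``no $a$–arcs at $v$'' guarantees the leaf at $v$ is always a $b$–arc, so that the move is of the stated type, and ``$v$ strongly essential'' guarantees the $b$–arcs never become boundary–parallel in the page, so that the rotation is genuine and the contributions accumulate rather than cancel.

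To conclude, I would invoke the description of the FDTC as the translation number of the $\phi$–action on the linearly ordered set of lifts of essential arcs based on $C$ (see \cite{hkm1} and \cite[Section~5]{ik2}), normalized so that $T_C$ acts by unit translation: the net displacement along the path $\{\beta_t\}$ of a fixed lift of $\beta_0$ equals $c(\phi,C)$. By the local step this displacement is, for $\sgn(v)=+1$, a sum of $p$ contributions in $[0,1]$ and $n$ contributions in $[-1,0]$, hence lies in $[-n,p]$; for $\sgn(v)=-1$ it is a sum of $p$ contributions in $[-1,0]$ and $n$ in $[0,1]$, hence lies in $[-p,n]$. This is exactly the asserted estimate. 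The step I expect to be the main obstacle is the region–by–region local analysis establishing monotonicity and the unit bound from the orientation conventions; a secondary point requiring care is reconciling the displacement of this particular path with the a priori only asymptotically defined FDTC, which is handled by the correspondence cited above. The rest is bookkeeping.
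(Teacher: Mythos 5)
The paper does not prove Lemma~\ref{lemma:estimate}; it is quoted from \cite[Lemma~5.1]{ik2} without proof. Your proposed strategy — tracking the one--parameter family $\{\beta_t\}$ of $b$--arcs through $v$, localizing the change at the $p+n$ hyperbolic points joined to $v$, reading off a signed contribution from each, and identifying the net effect with $c(\phi,C)$ via the translation--number description of the FDTC on (lifts of) essential arcs based at $C$ — is indeed the same scheme used in \cite{ik2}. The hypothesis that $v$ is strongly essential is used exactly as you say, to keep each $\beta_t$ an essential arc so that the comparison with $T_C^{\pm 1}$ in the linear order is meaningful, and the ``no $a$--arcs'' hypothesis keeps the leaf at $v$ a $b$--arc throughout. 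In that sense the proposal takes essentially the same approach.

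That said, the write--up is a plan, not a proof: the two load--bearing claims — \emph{monotonicity} (a positive hyperbolic point adjacent to a positive $v$ always moves $\beta_t$ weakly to the right in the cyclic order at $C$, a negative one weakly to the left, with roles swapped for $\sgn(v)=-1$) and the \emph{unit bound} (one elementary move stays between the identity and a single twist $T_C^{\pm1}$) — are asserted but not established, and you flag them yourself. These are precisely the nontrivial content of the lemma, and they require the explicit region--by--region check against the sign conventions of Figures~\ref{fig:sign} and \ref{region} (one must argue, for instance, that the surgery along a describing arc cannot ``wrap'' the resulting $b$--arc past a full twist at $C$, which is where essentiality and the fact that the describing arc is embedded and disjoint from the other leaves are used). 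Also be careful with the claim that the contributions lie in closed intervals $[0,1]$ versus half--open $[0,1)$: a positive hyperbolic point may contribute $0$ (no change in isotopy class near $C$), so the bound $c(\phi,C)\le p$ indeed holds, but if you only establish the open bound $<1$ per move you would get the slightly different statement $c(\phi,C)<p$ for $p\ge 1$; the closed bound requires a limiting comparison argument of the kind made precise in \cite[Section~5]{ik2}. Until the local analysis is written out, I would regard this as a correct outline with the central lemma still to be proved.
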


The embedding of $F$ near a hyperbolic point can be  described as follows: 
Recall that a hyperbolic point is a saddle tangency of a page and $F$. Consider a saddle-shaped subsurface of $F$ where leaves $l_1$ and $l_2$ (possibly $l_1=l_2$) as in Figure~\ref{fig:hyperbolic} are sitting on a page $S_t$.  
As $t$ increases (the page moves up) the leaves approach along a properly embedded arc $\gamma \subset S_t$ (dashed in Figure~\ref{fig:hyperbolic}) joining $l_{1}$ and $l_{2}$ and switch the configuration. 
See the passage in Figure~\ref{fig:hyperbolic}. 
We call $\gamma$ a {\em describing arc} of the hyperbolic point. 
Up to isotopy, $\gamma$ is uniquely determined and conversely $\gamma$ uniquely determines an embedding of the saddle. We often put the sign of a hyperbolic point near its describing arc. 
\begin{figure}[htbp]
\SetLabels
(.1*.78) $\gamma$\\
(.36*.82) $F$\\
(.52*.55) $l_2$\\
(.05*.86) $l_1$\\
(.18*.86)   $l_2$\\
(.46*.59) $\gamma$\\
(.36*.55) $l_1$\\
(.58*.55)   $S_t$\\
\endSetLabels
\strut\AffixLabels{\includegraphics*[width=100mm]{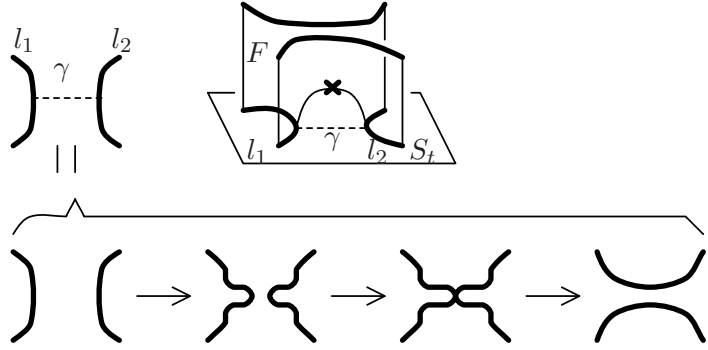}}
\caption{A describing arc (dashed) for a hyperbolic singularity.}
\label{fig:hyperbolic}
\end{figure}

The complement $\overline{ M_{(S,\phi)} \setminus S_{0}}$  of the page $S_0$ can be identified with $S\times[0,1]/\sim_{\partial}$, where $\sim_{\partial}$ is an equivalence relation $(x,t) \sim_{\partial} (x,s)$ for $x \in \partial S$ and $s,t \in [0,1]$. 
Let 
\begin{equation}\label{projection}
\mathcal{P}: (S\times[0,1]/\sim_{\partial}) \rightarrow S
\end{equation}
be the projection defined by $\mathcal P (x, t) = x$. 
To compare leaves in different pages we often use the projection $\mathcal{P}$. 
For example, by saying ``b-arcs $b \in S_t$ and $b' \in S_{t'}$ intersect'' we mean the arcs $\mathcal{P}(b)$ and $\mathcal{P}(b')$ intersect in $S$.

A {\em movie presentation} \cite{ik1-1} of $F$ describes how $F$ is embedded up to isotopy:  
Take $0=s_{0}<s_{1}<\cdots < s_{k} =1$ such that $S_{s_{i}}$ is a regular page and there exists exactly one hyperbolic point $h_{i}$ in each interval $(s_{i},s_{i+1})$. 
The sequence of slices $\{(S_{s_{i}}, S_{s_{i}} \cap F )\}$
with a describing arc of $h_{i}$ is called a movie presentation.  

\begin{example}
Let $(D^{2},id)$ be an open book decomposition of $S^{3}$. Consider a $2$-sphere $F$ embedded in $S^{3}$ as shown in Figure~\ref{fig:movie}-(a). Figure~\ref{fig:movie}-(b) depicts the entire picture of $\F(F)$ and Figure~\ref{fig:movie}-(c) is a movie presentation. 
\begin{figure}[htbp]
\begin{center}
\SetLabels
(0.0*0.98) (a)\\
(0.52*0.98) (b)\\
\endSetLabels
\strut\AffixLabels{\includegraphics*[scale=0.5, width=70mm]{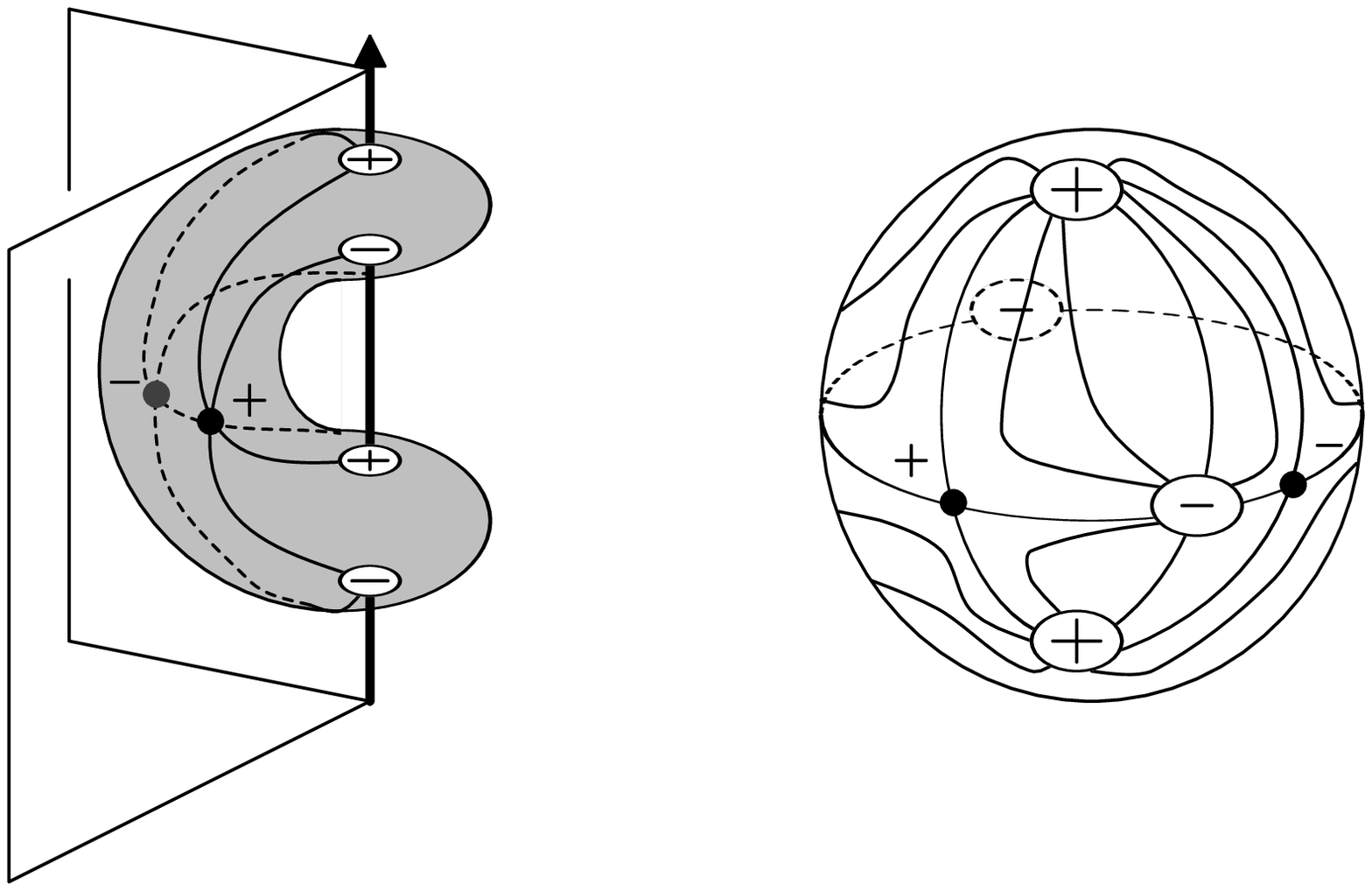}}
\SetLabels
(0.0*0.98) (c)\\
(0.12*0.3) $S_{0}$\\
(0.44*0.3) $S_{\frac{1}{2}}$\\
(0.74*0.3) $S_{1}$\\
(0.15*0.7) $-$\\
(0.51*0.62) $+$\\
(1.03*0.15) $id$\\
\endSetLabels
\strut\AffixLabels{\includegraphics*[scale=0.5, width=100mm]{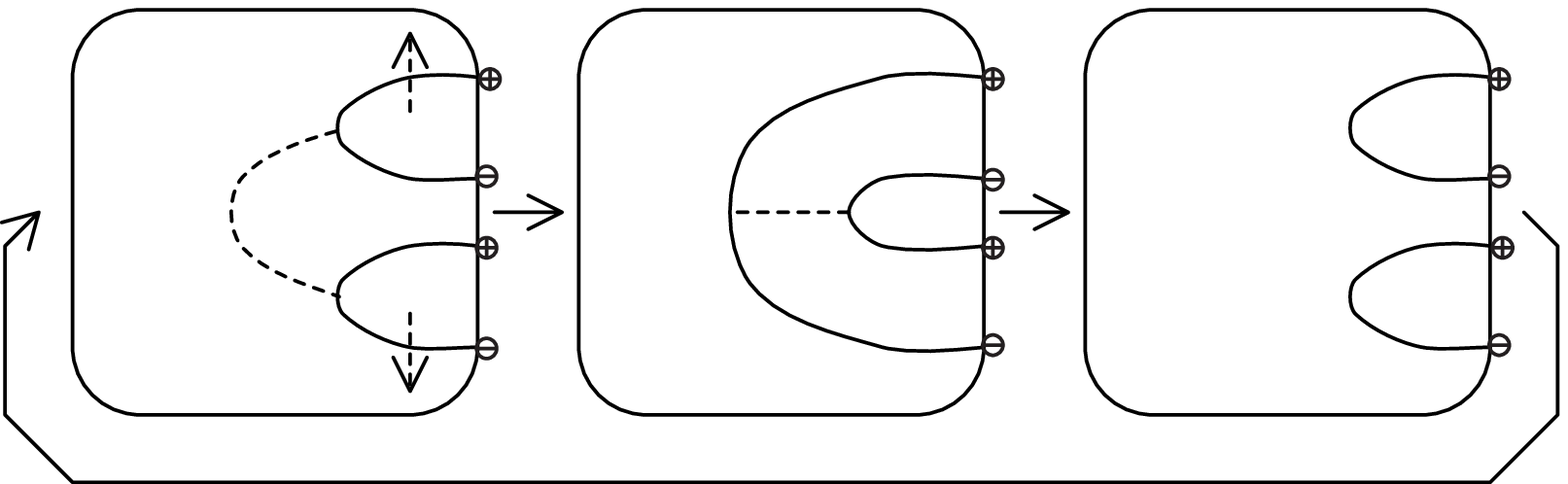}}
\caption{A movie presentation. 
Dashed arrows indicate normals $\vec{n_F}$ to $F$.}
\label{fig:movie}
\end{center}
\end{figure}
\end{example}

Both the surface $F$ and the ambient manifold $M$ are oriented. Let $\vec n_F$ be a positive normal to $F$. 
We orient each leaf of $\F(F)$, for both regular and singular, so that if we stand up on the positive side of $F$ and walk along a leaf in the positive direction then the positive side of the corresponding page $S_t$ of the open book is on our left. 
In other words, at a non-singular point $p$ on a leaf $l \subset (S_t \cap F)$ let $\vec n_S$ be a positive normal to $S_t$ then $X_{ob}= \vec n_S \times \vec n_F$ is a positive tangent to $l$. 
As a consequence, positive/negative elliptic points become sources/sinks of the vector field $X_{ob}$. 

Here is a useful fact about the sign of a hyperbolic point, its describing arc and $\vec n_F$. 

\begin{observation}\label{obs of sign} 
A hyperbolic point is positive (resp. negative) if and only if positive normals $\vec n_F$ point out of (resp. into) its describing arc.
See Figure~\ref{describing-arc}.
\begin{figure}[htbp]
\begin{center}
\SetLabels
(0*.7)$\vec n_F$\\
(.2*.7)$\vec n_F$\\
(.1*.82)$+$\\
(.1*.25)$-$\\
\endSetLabels
\strut\AffixLabels{\includegraphics*[scale=0.5, width=110mm]{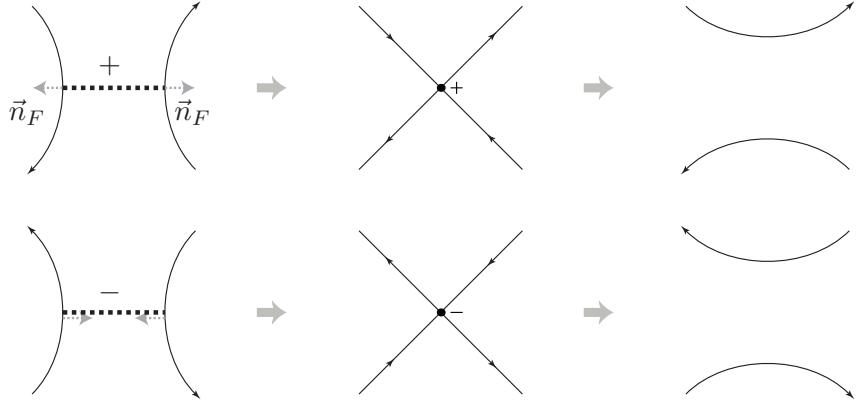}}
\caption{Observation~\ref{obs of sign}. Relation of the signs of hyperbolic points, describing arcs (thik dashed) and normal vectors $\vec n_F$ (dashed gray arrows).}
\label{describing-arc}
\end{center}
\end{figure}
\end{observation}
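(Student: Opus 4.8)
The statement is local near the hyperbolic point $h$, so the plan is to put $F$ into a standard local form and then simply read off the normal vector. Since $h$ is a tangency of $F$ with the page $S_{t(h)}$ we have $T_{h}F=T_{h}S_{t(h)}$, so near $h$ the surface $F$ is a graph over that page. First I would choose a local trivialization of the fibration $\pi$ near $h$, giving ambient coordinates $(x,y,t)$ in which the pages are the slices $\{t=\textrm{const}\}$, the direction of increasing $t$ is $+\partial_{t}$, and $h$ is the origin. Applying the Morse lemma to the function $t|_{F}$ by an orientation-preserving change of the $(x,y)$-coordinates — which can be performed fibrewise and hence keeps the pages equal to the slices $\{t=\textrm{const}\}$ — I may assume that near $h$
\[
F=\{\, t=x^{2}-y^{2}\,\}.
\]
In this model the two unit normals to $F$ are $\pm(-2x,2y,1)/\|\cdot\|$, equal to $\pm\partial_{t}$ at $h$; by definition $\sgn(h)=+1$ is the case $\vec n_{F}=(-2x,2y,1)/\|\cdot\|$, and $\sgn(h)=-1$ is the case $\vec n_{F}=(2x,-2y,-1)/\|\cdot\|$.

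Next I would identify the describing arc in this model. For small $\e>0$ the page $S_{-\e}$ meets $F$ in the two branches $\{y=\pm\sqrt{x^{2}+\e}\}$, which as $t$ increases to $0$ approach each other along the $y$-axis and then, in $S_{\e}$, reconnect as the branches $\{x=\pm\sqrt{y^{2}+\e}\}$. Hence, up to isotopy, the describing arc is the segment
\[
\gamma=\{\, x=0,\ -\sqrt{\e}\leq y\leq\sqrt{\e}\,\}\subset S_{-\e},
\]
with endpoints $(0,\pm\sqrt{\e})$ lying on $l_{1}$ and $l_{2}$. Projecting $\vec n_{F}$ into the tangent plane of the page — that is, discarding the $\partial_{t}$-component — at the two endpoints of $\gamma$ this projection equals $\pm 2\sqrt{\e}\,\partial_{y}$ when $\sgn(h)=+1$, which points away from $\gamma$ at both ends, and $\mp 2\sqrt{\e}\,\partial_{y}$ when $\sgn(h)=-1$, which points into $\gamma$ at both ends. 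Since ``$\vec n_{F}$ points out of / into its describing arc'' means precisely that the page-component of $\vec n_{F}$ points away from / toward $\gamma$ near the endpoints of $\gamma$, this is the claimed equivalence.

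The step that needs care is the reduction to the normal form: one must make sure the coordinate change producing $F=\{t=x^{2}-y^{2}\}$ both preserves the fibration by pages, so that ``page-component of $\vec n_{F}$'' and ``describing arc of $h$'' retain their meaning, and is orientation-preserving, so that $\sgn(h)$ and the two-sidedness underlying ``out of / into $\gamma$'' are transported faithfully. Once this is settled the rest is a one-line computation. If one prefers to avoid the Morse lemma, the same argument goes through after merely diagonalizing $\mathrm{Hess}(t|_{F})$ at $h$ by a rotation, using that the page-component of $\vec n_{F}$ at a point of $F$ equals $\mp\nabla(t|_{F})$ according as $\sgn(h)=\pm1$ and that $\nabla(t|_{F})$ points toward $\gamma$ near $\partial\gamma$; the higher-order terms are irrelevant to the local picture. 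In effect this is just tracking $\vec n_{F}$ through the passage in Figure~\ref{fig:hyperbolic}.
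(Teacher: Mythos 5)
Your proof is correct. The paper offers no written argument for this Observation (it simply appeals to the picture in Figure~\ref{describing-arc}), and your saddle-normal-form computation is precisely the calculation that picture encodes; the fibre-preserving coordinate change you flag is unproblematic because the Morse coordinates on $F$ correspond, via the graph projection, to coordinates on the page $S_{t(h)}$, so extending by the identity in $t$ preserves the slices, and your fallback of merely diagonalizing the Hessian by a rotation sidesteps the issue entirely.
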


The {\em graph $G_{--}=G_{--}(F)$} of $\F(F)$ is a graph which consists of negative elliptic points, negative hyperbolic points and the unstable separatrices for negative hyperbolic points in $aa$- $ab$- and $bb$-tiles.
 The vertices of $G_{--}$ are the negative elliptic points in $ab$- and $bb$-tiles and the end points of the edges of $G_{--}$ that lie on $\partial F$, called {\em fake} vertices.
Similarly, the graph $G_{++}$ consists of positive elliptic points, positive hyperbolic points and the stable separatrices of positive hyperbolic points.

\begin{definition}\label{def:trans-ot-disc}\cite[Definition 4.1]{ik1-1}
An embedded disc $D \subset M_{(S, \phi)}$ whose boundary is a positively braided unknot is called a {\em transverse overtwisted disc} if 
\begin{enumerate}
\item $G_{--}$ is a connected tree with no fake vertices.
\item $G_{++}$ is homeomorphic to $S^1$.
\item $\F(D)$ contains no c-circles. 
\end{enumerate}
\end{definition}

As proved in \cite[Proposition 4.2, Corollary 4.6]{ik1-1} an open book $(S,\phi)$ supports an overtwisted contact structure if and only if $M_{(S,\phi)}$ contains a transverse overtwisted disc.

In \cite{ik3}, we study operations on open book foliations including b-arc foliation changes:  
\begin{theorem}[b-arc foliation change]\cite[Theorem 3.1, Proposition 3.2]{ik3}
\label{theorem:b-arcfolchange}
Assume that the open book foliation $\F(F)$ contains two tiles $R_1, R_2$ satisfying the following conditions {\bf(i)}--{\bf(iii)}, see Figure~\ref{fig:folchange}-(a): 
\begin{description}
\item[(i)] 
$R_i$ $(i=1,2)$ is either an $ab$-tile or a $bb$-tile.
\item[(ii)] 
$\sgn(R_{1})=\sgn(R_{2})=\e \in \{+1, -1\}.$
\item[(iii)] 
$R_{1}$ and $R_{2}$ are adjacent at exactly one leaf that is  a separating $b$-arc, $b$.
\end{description}
Then there is an ambient isotopy $\Phi_{\tau}:M \rightarrow M$ supported on $M \setminus B$ such that: 
\begin{enumerate}
\item
$F'=\Phi_{1}(F)$ admits an open book foliation $\F(F')$. If $\F(F)$ is essential then so is $\F(F')$. 
\item 
The region decomposition of $\F(F')$ contains regions $R'_{1}, R'_{2}$, see Figure~\ref{fig:folchange}-(b,c) such that: 
\begin{enumerate}
\item 
Each $R'_i$ is an $aa$-, $ab$-, or $bb$-tile.
\item 
$\sgn(R'_{1})=\sgn(R'_{2})=\e$ as in {\bf(ii)} above. 
\item 
$\Phi_1(R_1 \cup R_2)= R_1' \cup R_2'$.
\item 
$R'_{1}\cap R'_{2}$ is exactly one leaf, $l$, of type $a$ or $b$. 
\item
The numbers of the hyperbolic points connected to the elliptic points $v$ and $w$ by a singular leaf decrease both by one, though the total number of hyperbolic points remains the same. 
\end{enumerate}
\item 
$\Phi_t$ preserves the region decomposition of $F\setminus(R_{1} \cup R_{2})$. 
\item 
If $\partial F$ is non-empty then $\Phi_{t}(\partial F)$ is a closed braid with respect to  $(S, \phi)$ for all $t \in [0,1]$, i.e., $L=\partial F$ and $L' = \partial F'$ are braid isotopic. 
\end{enumerate}
\end{theorem}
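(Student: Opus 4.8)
The plan is to prove Theorem~\ref{theorem:b-arcfolchange} by a purely local argument. Since every hypothesis and every conclusion concerns only $R_1\cup R_2$ and the regions bordering it, it suffices to construct an ambient isotopy $\Phi_s$ that fixes $F$ outside $R_1\cup R_2$ and is supported in $M\setminus B$; conclusion (3) will then be immediate, and $\Phi_1(R_1\cup R_2)$ will be a disc with the same boundary in $F$ but re-embedded so as to carry the desired two-tile foliation. The mechanism is: slide the hyperbolic point of one tile monotonically through the page carrying the shared leaf $b$ until it becomes adjacent to the hyperbolic point of the other tile, and then re-embed the resulting adjacent pair of equal-sign saddles in the complementary way. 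To set up the model, relabel so that $h_i\in S_{t_i}$ with $t_1<t_2$ and $b\subset S_t$, $t_1<t<t_2$ (other orderings are analogous). Since $b$ is a boundary leaf of $R_1$ on a page above $S_{t_1}$, it is one of the leaves produced by resolving the singular leaf at $h_1$, and likewise $b$ is one of the leaves consumed at $h_2$; hence the describing arc $\gamma_1$ of $h_1$ lies just below $S_{t_1}$ with one endpoint on a leaf that becomes $b$, while the describing arc $\gamma_2$ of $h_2$ lies just below $S_{t_2}$ with one endpoint on $b$ itself. Projecting by $\mathcal P$ into a single copy of $S$, the hypothesis that $b$ is separating forces $\gamma_2$ and the leaves it meets to lie, up to isotopy, in the component of $S\setminus b$ containing the image of $R_1$. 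Running through the finitely many combinatorial types — each of $R_1,R_2$ an $ab$- or $bb$-tile, the positions of $b$ as a leaf of each tile (including the degenerate identifications of Figure~\ref{degtiles}), and the two signs $\e$ — yields in each case a standard disc, possibly with part of its boundary identified, whose open book foliation is completely determined by the pair $(\gamma_1,\gamma_2)$; this is the left-hand picture of Figure~\ref{fig:folchange}.

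With the model in hand I would define $\Phi_s$ as the isotopy of $M$ that first carries $h_1$ up through the page $S_t$ — legitimate since $S_t$ meets $F$ in regular leaves only, so $h_1$ never collides with $h_2$ — and then re-resolves the resulting adjacent equal-sign pair $\{h_1,h_2\}$ into a new pair $\{h_1',h_2'\}$ whose common leaf is no longer $b$; this can be done so that $\Phi_s$ fixes $F$ outside $R_1\cup R_2$. All elliptic points of $R_1\cup R_2$ persist with their radial neighbourhoods undisturbed, so $\Phi_s$ is supported in $M\setminus B$. Because $\Phi_s$ moves $F$ only within a union of pages, $F'=\Phi_1(F)$ again meets every page transversely away from finitely many saddle tangencies, so $\mathcal F(F')$ satisfies ($\mathcal F$ i)--($\mathcal F$ iv) and is an open book foliation. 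If $\partial F\neq\emptyset$ we arrange the local modification of the $a$-arcs so that $\partial F$ stays transverse to every page, so $L=\partial F$ and $L'=\partial F'$ remain braid isotopic, which is (4).

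It remains to read off the other conclusions and to verify essentiality. In every case the re-resolution manifestly produces two tiles $R_1',R_2'$ of type $aa$, $ab$ or $bb$ with $\Phi_1(R_1\cup R_2)=R_1'\cup R_2'$ and with $R_1'\cap R_2'$ a single regular leaf $l$ of type $a$ or $b$; this is the right-hand picture of Figure~\ref{fig:folchange}, giving (1), (2a) and (2d). The signs are preserved: by Observation~\ref{obs of sign} the sign of a hyperbolic point records whether $\vec n_F$ points out of or into its describing arc, and neither the slide nor the re-resolution changes the side of $F'$ on which $\vec n_F$ lies, so $\sgn(R_i')=\e$, which is (2b)--(2c). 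For (2e), the endpoints $v,w$ of $b$ are corners of the singular leaves of both $h_1$ and $h_2$, hence joined to both by singular leaves before the move, whereas afterwards $l$ is an $a$- or $b$-arc whose endpoints are neither $v$ nor $w$; chasing the separatrices in the model shows that each of $v$ and $w$ is now joined to exactly one of $h_1',h_2'$, a decrease by one, while the total number of hyperbolic points is unchanged since only $h_1,h_2$ were touched. Finally, every $b$-arc of $\mathcal F(F')$ inside $R_1'\cup R_2'$ is obtained from the $b$-arcs of $\mathcal F(F)$ in $R_1\cup R_2$ by band moves confined to the prescribed component of $S\setminus b$; together with essentiality of the original $b$-arcs and the fact that $b$ is separating this shows that no new $b$-arc is boundary-parallel in its page, so $\mathcal F(F')$ is again essential. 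The main obstacle is precisely this case analysis: correctly listing the local configurations — especially the degenerate tiles of Figure~\ref{degtiles} and the placements of $b$ — and checking in each that slide-and-re-resolve is realised by an ambient isotopy supported in $M\setminus B$, reproduces Figure~\ref{fig:folchange}, and preserves essentiality; the essentiality step is the delicate one, being the only point where the separating hypothesis (iii) is genuinely used.
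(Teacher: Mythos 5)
This theorem is stated in the paper only as a review item: it is cited as \cite[Theorem 3.1, Proposition 3.2]{ik3}, and the present paper gives no proof of its own. So there is no in-paper argument to compare against; I can only assess your sketch on its own terms.

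Your outline has the right shape for a proof of a ``foliation change'' statement --- identify the local model from the describing arcs, slide one saddle past the other, re-resolve, and then read off the conclusions --- but the step on which everything hinges is asserted rather than established, and I believe it is not even correctly stated. You claim that the separating hypothesis (iii) alone forces $\gamma_2$ and its leaves to lie in the component of $S\setminus b$ containing the image of $R_1$; but $\gamma_2$ is the describing arc of a saddle that lives in $R_2$, and whether its free endpoint sits on the $R_1$-side or the $R_2$-side of $\mathcal P(b)$ depends on the orientation data, i.e., on the signs. This is exactly where hypothesis (ii) (equal signs) enters: by Observation~\ref{obs of sign} the sign dictates which side of the describing arc the normal $\vec n_F$ points, and it is the combination of separating $b$ with $\sgn(h_1)=\sgn(h_2)$ that pins down the relative position of $\gamma_1,\gamma_2$ and creates the room to isotope one saddle past the shared $b$-arc. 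In your write-up (ii) is invoked only \emph{after} the isotopy, to check that $\sgn(R_i')=\e$, which leaves the existence of the ambient isotopy itself unsupported. A reader cannot tell from the sketch why the slide-and-re-resolve must succeed under (ii)--(iii) and why it would fail if either were dropped.

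Two further gaps. First, the statement guarantees an isotopy supported in $M\setminus B$ that fixes the region decomposition of $F\setminus(R_1\cup R_2)$; to get this you must push past the page $S_t$ without ever creating a saddle--saddle connection or disturbing the other leaves on $S_t$ outside the hexagon. You assert this is ``legitimate since $S_t$ meets $F$ in regular leaves only,'' but the danger is collision with the \emph{projections} of other leaves of $F$ in pages near $t$, which is precisely what the separating hypothesis is supposed to preclude and what needs to be spelled out. Second, the essentiality statement in (1) is singled out by you as ``the delicate one'' but is left as a promissory note; a proof needs to say why the new leaf $l$ and all $b$-arcs of $R_1'\cup R_2'$ remain non-boundary-parallel in $S_t\setminus(S_t\cap F')$, given only that the originals were and that $b$ separates. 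As written, the proposal reproduces the headline mechanism one expects from \cite{ik3} but does not yet constitute a proof.
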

\begin{figure}[htbp]
 \begin{center}
\SetLabels
(.01*.73) (a)\\ 
(.2*.5)  $b$\\
(.1*.53) $\e$\\
(.35*.53) $\e$\\
(0.35*0.38)  $R_{1}$\\
(0.13*0.38)  $R_{2}$\\
(.22*.23) $w$\\
(.22*.74) $v$\\
(.55*.95) (b)\\
(.73*.67) $\e$\\
(.81*.85) $\e$\\
(0.92*0.8)  $R_{2}'$\\
(0.65*0.7)  $R_{1}'$\\
(.77*.75) $l$\\
(.82*.53) $w$\\
(.82*1) $v$\\
(.55*0.45) (c)\\
(.73*.31) $\e$\\
(.81*.14) $\e$\\
(0.93*0.2)  $R_{2}'$\\
(0.8*0.3)  $R_{1}'$\\
(.77*.23) $l$\\
(.82*0) $w$\\
(.82*.45) $v$\\
\endSetLabels
\strut\AffixLabels{\includegraphics*[scale=0.5, width=90mm]{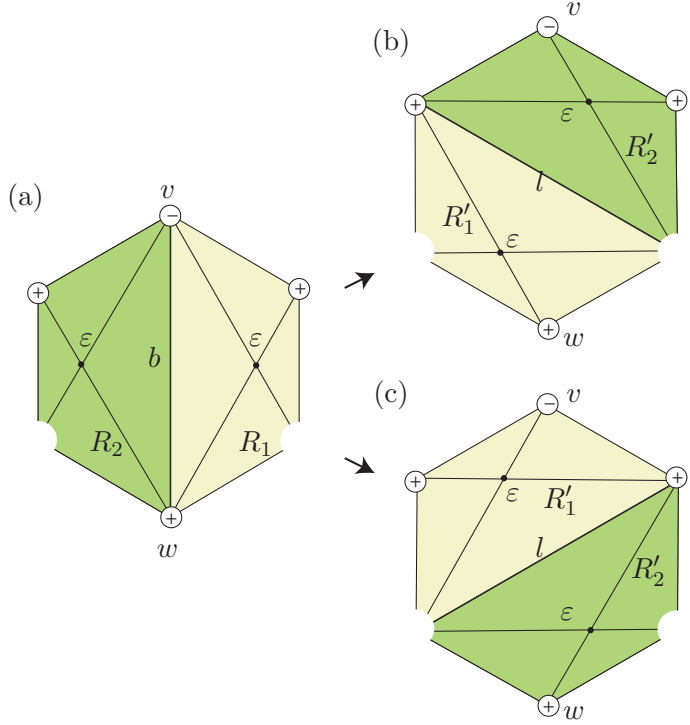}}
\caption{
Transitions (a)$\to$(b) and (a)$\to$(c) are $b$-arc foliation changes. 
At each corner hole a negative elliptic point (reps. the boundary of $F$) is placed if $R_i$ is a bb-tile (reps. ab-tile). }
\label{fig:folchange}
\end{center}
\end{figure}

\section{Outline of the proof of Theorem \ref{theorem:main}}
\label{sec:outline}

The rest of the paper is devoted to proving Theorem \ref{theorem:main}.
In this section we give an overview of the proof.
Assume that a planar open book $(S,\phi)$ supports an overtwisted contact structure. 
We start with an arbitrary transverse overtwisted disc $D$ in $M_{(S,\phi)}$ and introduce a {\em complexity} of $D$ which measures how far $D$ is from having the property {\bf (SE1)}. 
We construct a new transverse overtwisted disc $D'$ whose complexity is less than that of $D$. 
By standard induction on the complexity we finish the proof.

Here is a more detailed outline. 

In Section \ref{sec:niceposition}, as an intermediate step we construct from $D$  an embedded disc $D_*$ by replacing a boundary parallel (i.e., non-strongly essential) b-arc of $D$ with a non-essential b-arc. 

In Section \ref{sec:foliation}, we study the open book foliation of $D_*$ and show how $\F(D_*)$ is related to $\F(D)$ (cf. Figure \ref{fig:blowup}). 

In Section \ref{sec:complexity}, we construct a transverse overtwisted disc $D'$ from $D_*$. After studying basic properties of $D'$ we define a complexity of a transverse overtwisted disc and prove that $D'$ has smaller complexity than that of the original $D$. 

In Section \ref{sec:proof}, we complete the proof of Theorem \ref{theorem:main}.

\section{Movie presentation of the intermediate disc $D_*$}
\label{sec:niceposition}

Let $(S, \phi)$ be a planar open book supporting an overtwisted contact structure. 
Let $D$ be a transverse overtwisted disk in $M_{(S, \phi)}$. 
Assume that $D$ does not satisfy the property {\bf (SE1)} stated in Theorem \ref{theorem:main}.

In this section we construct a disc $D_*$ from  $D$ mentioned in Section~\ref{sec:outline}. This is a crucial intermediate step to find an overtwisted disc with {\bf (SE1)}. We do this by converting a boundary parallel b-arc into a non-essential b-arc. 

By \cite[Theorem 3.2]{ik2} we may assume that $\F(D)$ is essential.

Let $v \in G_{--}(D)$ be a valence one vertex. That is, $v$ is a negative elliptic point of $\F(D)$ and the number of negative hyperbolic points connected to $v$ by a singular leaf is one. Let $m$ be the number of positive hyperbolic points connected to $v$ by a singular leaf. 
Call the positive (resp. negative) hyperbolic points $h_1,\dots,h_m$ (resp. $h_-$). 
At $v$ one bb-tile $R_-$ containing $h_-$ and $m$ ab-tiles $R_1,\dots,R_m$ containing $h_1,\dots,h_m$ respectively meet. 
Let $\Omega_0,\ldots,\Omega_m$ denote the positive elliptic points which are connected to $v$ by a b-arc such that $\Omega_i \in \partial R_i \cap \partial R_{i+1}$ and $\Omega_0, \Omega_m \in \partial R_-$.   
For $t \in[0,1]$ we denote the b-arc in the page $S_{t}$ that ends at $v$ by $b_{t}$, see Figure~\ref{fig:nearv}-(a). 
\begin{figure}[htbp]
\begin{center}
\SetLabels
(0.02*0.98) (a)\\
(0.55*0.98) (b)\\
(0.2*0.5) \Large$v$\\
(0.31*0.25) $\Omega_0$\\
(0.13*0.3) $\Omega_1$\\
(0.105*0.7) $\Omega_{m\!-\!1}$\\
(0.36*0.75) $\Omega_{\!m}$\\
(0.22*0.28) $h_1$\\
(0.22*0.73) $h_{\!m}$\\
(0.36*0.46) $h_{-}$\\
(0.62*0.13) \Large $v$\\
(0.62*0.85) $\Omega_0$\\
(0.92*0.14) $S_0$\\
(0.89*0.42) $\Delta$\\
(0.63*0.33) $x_n$\\
(0.63*0.62) $x_2$\\
(0.63*0.7) $x_1$\\
(.98*.5) $b_0$\\
(0.8*0.56) {\rotatebox{-180}{\LARGE $A^\Box$}}\\
\endSetLabels
\strut\AffixLabels{\includegraphics*[width=110mm]{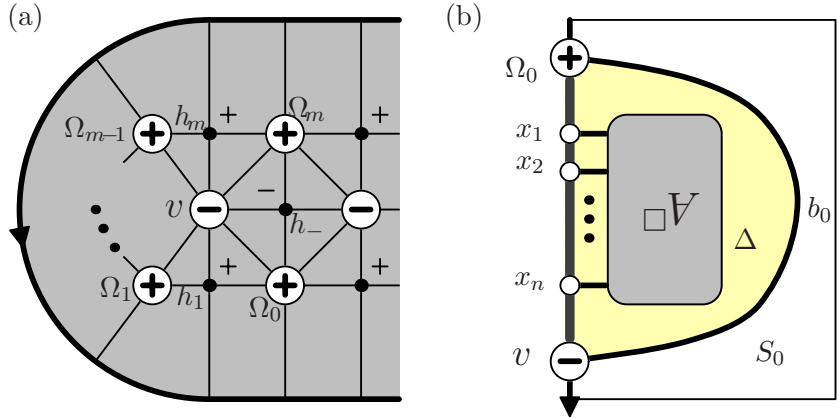}}
\caption{
(a) $\F(D)$ near the valence one vertex $v$. 
(b) The leaf box $A^{\Box}$ (Definition~\ref{definition:leafbox}) in the page $S_{0}$ represents the set of leaves bounded by $b_0$. The label $A^{\Box}$ is  upside-down because the orientation of the binding is downward.
}
\label{fig:nearv}
\end{center}
\end{figure}

We may assume that the page $S_{0}= \phi(S_{1})$ is a regular fiber. 
Denote the singular fiber that contains $h_{i}$ by $S_{t_i}$ where $t_{i} \in (0,1)$.
There exists a small $\varepsilon>0$ such that: 
\begin{itemize}
\item $h_{-} \in S_{1-\varepsilon}$.
\item For any distinct singular fibers $S_{t}$ and $S_{t'}$ we have $|t-t'|>2\varepsilon$. 
\item The family $\{S_{t}\: | \:t_{i}-\varepsilon \leq t \leq t_{i}+\varepsilon \}$ contains exactly one hyperbolic point which is  $h_{i}$.
\item $0 < t_1 <t_2 < \cdots < t_m < 1$.  
\end{itemize}

\begin{lemma}
\label{lemma:removesep}
With some perturbation of $D$ we may assume that: 
\begin{description}
\item[(P1)] The b-arc $b_t$ is non-separating for all $t \in (t_1,t_m)$ with $t \neq t_1, \ldots, t_m$. 
\end{description}
\end{lemma}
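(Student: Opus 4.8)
The plan is to use planarity to pin down exactly which of the leaves $b_t$ can be separating, and then to remove the offending ones one at a time by $b$-arc foliation changes, inducting on the number $m$ of positive hyperbolic points joined to $v$. The starting point is an elementary planarity fact: since every page $S_t$ is a copy of the planar surface $S$, a properly embedded arc in $S_t$ with both endpoints on the binding $B$ is separating if and only if its two endpoints lie on the same binding component — cutting a planar surface along an arc that joins two distinct boundary circles leaves it connected, while cutting along an arc with both endpoints on one circle disconnects it. One endpoint of $b_t$ is always $v$, which lies on a fixed binding component $C \subset \partial S$. Tracing the movie of $\F(D)$ around the radial node $v$, whose leaves lie on pairwise distinct pages so that $b_t$ sweeps once around $v$ as $t$ runs over $[0,1]$, one sees that for each $i \in \{1,\dots,m-1\}$ the leaf $b_t$ has endpoints $v$ and $\Omega_i$ for every $t \in (t_i,t_{i+1})$, the common boundary leaf $\partial R_i \cap \partial R_{i+1}$ being the $b$-arc joining $v$ and $\Omega_i$. (Here one uses $t_m < 1-\e$, so that $h_-$ plays no role in this range; this follows from the separation-of-singular-fibers conditions fixed before the lemma.) Hence \textbf{(P1)} holds if and only if none of $\Omega_1,\dots,\Omega_{m-1}$ lies on $C$.

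I would then argue by induction on $m$. If $m \le 1$ then $(t_1,t_m)=\emptyset$ and \textbf{(P1)} is vacuous. Assume $m \ge 2$ and \textbf{(P1)} fails, and pick $j \in \{1,\dots,m-1\}$ with $\Omega_j \in C$; then $b := \partial R_j \cap \partial R_{j+1}$ is a separating $b$-arc, and the two distinct regions meeting along $b$ are exactly $R_j$ and $R_{j+1}$. Each of these has $v$ as an elliptic corner and has $b$ on its boundary, so, since $\F(D)$ has no $c$-circles, each is an $ab$- or $bb$-tile; and each contains a hyperbolic point joined to $v$, which is positive because the only negative hyperbolic point joined to $v$ is $h_-$, lying in $R_- \notin \{R_j,R_{j+1}\}$. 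Thus, after a small perturbation if $R_j$ and $R_{j+1}$ happen to share more than the single leaf $b$, the hypotheses of Theorem~\ref{theorem:b-arcfolchange} are met with $\e = +1$; applying the $b$-arc foliation change produces an ambient isotopic disc $D' = \Phi_1(D)$.

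Next I would check that $D'$ satisfies all the running hypotheses of this section but with $m$ decreased by one. By parts (1), (3), (4) of Theorem~\ref{theorem:b-arcfolchange}, $\F(D')$ is again essential, the region decomposition away from $R_j \cup R_{j+1}$ — in particular the $bb$-tile $R_-$, hence the unique $G_{--}$-edge incident to $v$ — is unchanged, and $\partial D'$ is braid isotopic to $\partial D$; by part (2)(a) the new regions $R'_j, R'_{j+1}$ are $aa$-, $ab$- or $bb$-tiles, so $\F(D')$ still has no $c$-circles; and by parts (2)(b), (2)(e) the point $v$ is still a valence-one vertex of $G_{--}(D')$, now joined to only $m-1$ positive hyperbolic points. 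Granting that $D'$ is still a transverse overtwisted disc, the induction hypothesis applied to $(D',v)$ supplies a further perturbation yielding \textbf{(P1)}, and the composition of the two perturbations is the one claimed.

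The step I expect to be the main obstacle is confirming that $D'$ is still a transverse overtwisted disc — concretely, that $G_{++}(D')$ is still homeomorphic to $S^1$. The $b$-arc foliation change rewires the stable separatrices of the two positive hyperbolic points inside $R_j \cup R_{j+1}$, so one must verify that in each of the model transitions (a)$\to$(b) and (a)$\to$(c) of Figure~\ref{fig:folchange} the sub-arc $G_{++} \cap (R_j \cup R_{j+1})$ of the circle $G_{++}(D)$ is replaced by an arc with the same endpoints on $\partial(R_j \cup R_{j+1})$, so that the ambient circle is not broken. A subsidiary technical point is justifying the preliminary perturbation that arranges $R_j \cap R_{j+1} = b$, and checking that it leaves the rest of $\F(D)$ near $v$ undisturbed.
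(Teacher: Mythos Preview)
Your strategy matches the paper's: both apply the $b$-arc foliation change of Theorem~\ref{theorem:b-arcfolchange} to the adjacent positive $ab$-tiles $R_j,R_{j+1}$. But the paper follows this with a \emph{destabilization} of the closed braid $\partial D$, and that second step is essential --- without it the argument breaks at precisely the point you flag as the main obstacle.

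Here is why your hoped-for verification fails. In the hexagon $R_j\cup R_{j+1}$ the corners in cyclic order are $v,\ \Omega_{j-1},\ q_1,\ \Omega_j,\ q_2,\ \Omega_{j+1}$ with $q_1,q_2\in\partial D$, and the foliation change replaces the diagonal $v\Omega_j$ by one of $\Omega_{j-1}q_2$ or $q_1\Omega_{j+1}$. In either case the two new regions are one positive $ab$-tile (with $v$ as a corner) and one positive $aa$-tile (with $\Omega_j$ as a corner), as the paper states. Now, since $G_{++}(D)\cong S^1$, the vertex $\Omega_j$ has valence exactly $2$ there, so $h_j$ and $h_{j+1}$ are the \emph{only} positive hyperbolic points joined to $\Omega_j$; every other region around $\Omega_j$ is negative. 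After the change $\Omega_j$ is a corner of only the new $aa$-tile among positive regions, so it has valence $1$ in $G_{++}(D')$, while one of $\Omega_{j\pm1}$ (the one that is a corner of both new tiles) acquires valence $3$. Thus $G_{++}(D')$ is a circle with a pendant edge attached, not $S^1$, and $D'$ is not a transverse overtwisted disc.

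The paper's remedy is to destabilize $\partial D$ across the new $aa$-tile (Figure~\ref{fig:removesep}-(b)), deleting that tile together with its hyperbolic point and the elliptic point $\Omega_j$. What remains near $v$ is a single positive $ab$-tile with positive corners $\Omega_{j-1},\Omega_{j+1}$, so $G_{++}$ is again a circle, $G_{--}$ is untouched, and the number of positive hyperbolic points joined to $v$ has dropped by one; your induction on $m$ then goes through. (Incidentally, the ``preliminary perturbation'' you propose is unnecessary: since $\Omega_{j-1},\Omega_j,\Omega_{j+1}$ are distinct, $R_j\cap R_{j+1}$ is automatically the single $b$-arc $v\Omega_j$.)
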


\begin{proof}
Assume that $b_t$ is separating for some $t \in (t_i,t_{i+1})$. 
Then the ab-tiles $R_{i}$ and $R_{i+1}$ meet along a separating b-arc. 
Since $\sgn(R_i)=\sgn(R_{i+1})$, applying a b-arc foliation change (Theorem \ref{theorem:b-arcfolchange}) the region $R_i \cup R_{i+1}$ is replaced by the union of one positive ab-tile and one positive aa-tile. 
See the passage (a) in Figure~\ref{fig:removesep}. 
The new aa-tile can be eliminated by destabilizing the closed braid $\partial D$, see Figure~\ref{fig:removesep}-(b). 
The resulting disc is a transverse overtwisted disc.

As a consequence, the family of separating b-arcs $\{b_t \: |\: t \in (t_i,t_{i+1})\}$ disappears and the number of positive elliptic points connected to $v$ decreases by one. 
\end{proof}
\begin{figure}[htbp]
 \begin{center}
\SetLabels
(0*0.96) {\small separating b-arc}\\
(0.25*0.97) $v$\\
(0.42*0.86) $\Omega_{i-1}$\\
(0.24*0.85) $\Omega_{i}$\\
(0.07*0.86) $\Omega_{i+1}$\\
(0.42*0.24) $\Omega_{i-1}$\\
(0.25*0.38) $v$\\
(0.07*0.24) $\Omega_{i+1}$\\
(0.33*0.75) $h_{i}$\\
(0.12*0.75) $h_{i+1}$\\
(0.24*0.16) $h'$\\
(0.68*0.9) $h'$\\
(0.68*0.3) $h'$\\
(0.9*0.75) $h''$\\
(0.5*0.82) (a)\\
(0.82*0.52) (b)\\
(0.5*0.22) (c)\\
(0.26*0.65)  $R_{i}$\\
(0.16*0.65) $R_{i+1}$\\
\endSetLabels
\strut\AffixLabels{\includegraphics*[width=100mm]{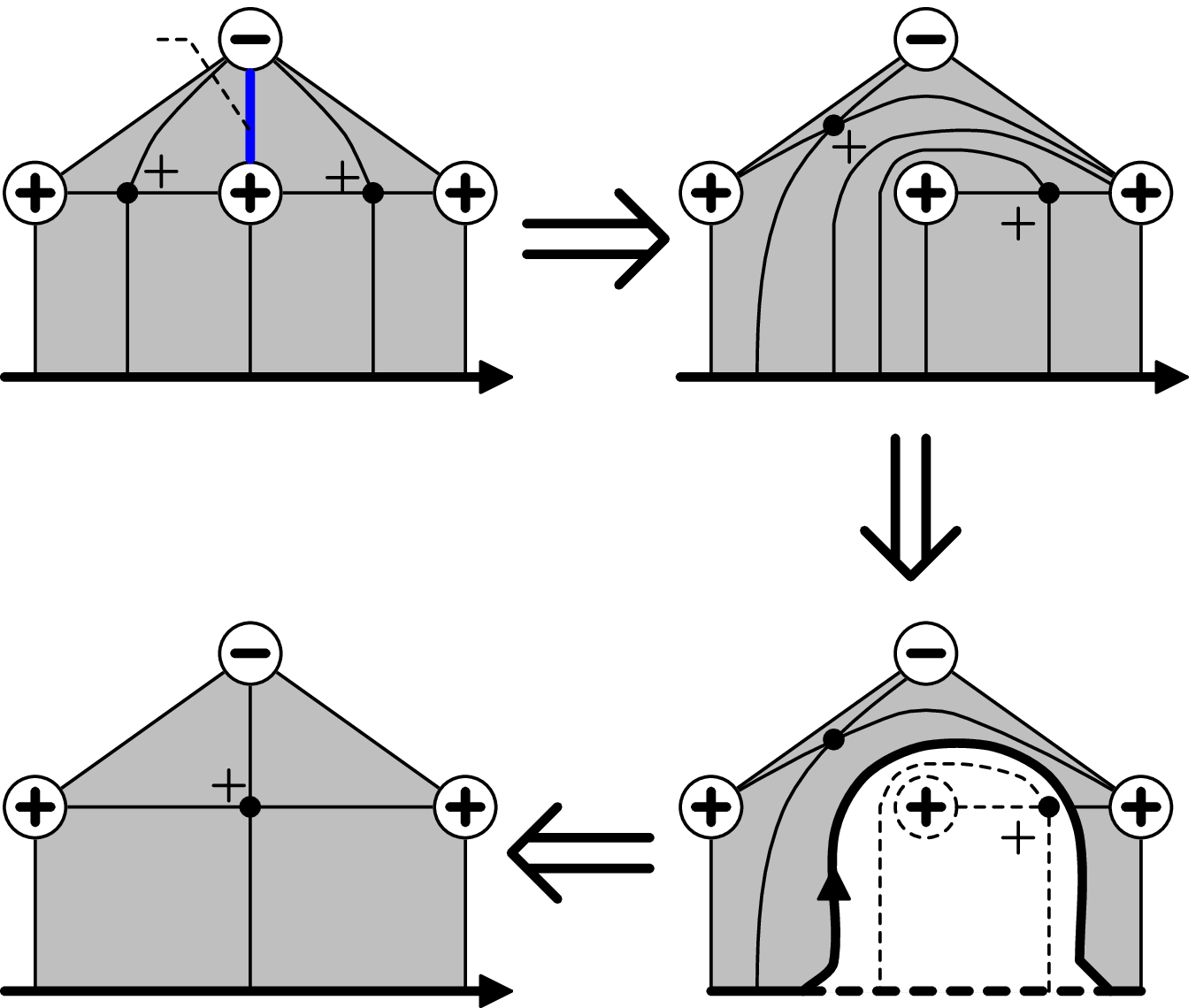}}
\caption{(Lemma \ref{lemma:removesep}): 
If the bb-tiles $R_{i}$ and $R_{i+1}$ are adjacent at a separating b-arc then we apply: \\ 
(a) b-arc foliation change. \\
(b) destabilization of the closed braid $\partial D$. \\
(c) rescaling the open book foliation.}
 \label{fig:removesep}
  \end{center}
\end{figure}

From now we assume that $v \in \F(D)$ is a non-strongly essential,  valence one vertex satisfying the property {\bf (P1)}.
This means that either $b_0$ or $b_{1-2\e}$ is boundary parallel (i.e., non-strongly essential). 
We may assume that $b_0$ is boundary parallel in $S_0$ and cobounds a disc $\Delta \subset S_{0}$ with a binding component. 
(The other case can be treated similarly.) 
Let $\partial' \Delta := \partial \Delta \setminus b_0$. 
Since $\F(D)$ is essential $\Int(\Delta)$ intersects $D$. Hence $\partial' \Delta$ contains elliptic points, $x_{1},\ldots,x_{n}$.

\begin{definition}
\label{definition:leafbox}
Let $A := \Int(\Delta) \cap D \subset S_{0}$ be the set of leaves contained in $\Delta$. 
The end points of $A$ are $x_1,\dots,x_n$. 
In movie presentations, $A$ is represented  by a gray box labeled $A^\Box$ as depicted in Figure \ref{fig:nearv}-(b). 
We call it a \emph{leaf box} for $A$.
\end{definition}

The following is a key observation where the planar assumption of $S$ plays a crucial role.

\begin{lemma}
\label{lemma:key}
Assume that $S$ is planar. Let $v$ be a non-strongly essential, valence one vertex satisfying {\bf(P1)} such that $b_0$ is boundary parallel in $S_0$.
Then $D$ satisfies the property 
\begin{description}
\item[(P2)] $\Omega_i \not \in \partial'\Delta$ for all $i=1,\ldots,m$. That is, $\Omega_i \neq x_1, \dots, x_n$ for all  $i=1,\ldots,m$.
\end{description}
\end{lemma}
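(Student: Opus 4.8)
The plan is to argue by contradiction: suppose some $\Omega_i$ equals one of the elliptic points $x_1,\dots,x_n$ on $\partial'\Delta$, i.e.\ $\Omega_i$ lies in the sub-disc $\Delta\subset S_0$ cut off by the boundary-parallel b-arc $b_0$. Recall that $\Omega_i$ is a positive elliptic point joined to $v$ by a b-arc, and that this b-arc $b$ is a leaf lying on $\partial R_i\cap\partial R_{i+1}$ (or $\partial R_-$ when $i=0$ or $m$). I would first track, via the movie presentation of $D$ near $v$, where this connecting b-arc sits inside the pages as $t$ runs from $0$ up through the hyperbolic points $h_1,\dots,h_i$: since the hyperbolic singularities $h_1,\dots,h_m$ and $h_-$ are precisely the ones joined to $v$, the b-arcs $b_t$ ending at $v$ evolve only at those times, and projecting by $\mathcal P$ one sees that $b$ and $b_0$ are either equal or disjoint-and-nested in $S$, with $v$ a common endpoint. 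The point is that $v$ lies on the binding component $C$ that, together with $b_0$, bounds $\Delta$; hence every b-arc ending at $v$ either runs into $\Delta$ or escapes it, and consistency with the nesting forces the picture to be highly constrained.

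The key step, and where the planarity of $S$ is essential, is the following: in a planar surface, a b-arc $b$ with one endpoint at $v$ and the other endpoint at a positive elliptic point $\Omega_i\in\Int(\Delta)$ must itself lie (up to the projection $\mathcal P$ and isotopy) inside $\Delta$, because $b$ together with an arc of $C$ through $v$ and an arc of the radial leaves at $\Omega_i$ cannot "go around" any genus or any other boundary component — there is nowhere else for it to go. But then $b$ is boundary-parallel in $S_t\setminus(S_t\cap\partial D)$: it is isotoped rel endpoints into $b_0\cup\partial'\Delta$, hence cut off from $C$ together with a sub-disc of $\Delta$ containing no other boundary of $D$ — contradicting the standing assumption (from \cite[Theorem 3.2]{ik2}) that $\F(D)$ is essential, i.e.\ that every b-arc of $\F(D)$ is essential. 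Here I must use that $b$ is genuinely a leaf of $\F(D)$ in some page $S_t$ with $t\in[0,1]$, not merely a page-arc; this is exactly the content of $\Omega_i\in\partial R_i\cap\partial R_{i+1}$ together with the region decomposition \cite[Proposition 2.15]{ik1-1}.

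I would organize the write-up in two moves: (1) a lemma-internal claim that if $\Omega_i\in\Delta$ then the connecting b-arc $b$ can be isotoped into $\Delta$, proved by the planarity/no-room argument above applied to the region decomposition near $v$ and property {\bf (P1)} (which guarantees the $b_t$ are non-separating for $t$ in the relevant range, so no separating b-arc obstructs the isotopy); and (2) the deduction that such a $b$ is inessential, contradicting essentiality of $\F(D)$. The main obstacle I anticipate is move (1): making precise "there is nowhere else for $b$ to go" requires a careful Euler-characteristic or innermost-arc argument in the planar page, keeping track of all of $\partial D\cap S_t$, and ruling out the degenerate configurations of Figure~\ref{degtiles} that could let $b$ wrap around a corner hole; I would handle this by an innermost-disc argument on $\mathcal P(b)\cup\mathcal P(b_0)$ in $S$, using that $S$ has genus zero so every simple closed curve formed by sub-arcs bounds a disc, and then pushing that disc to see $b$ is boundary-parallel.
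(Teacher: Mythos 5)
Your proposal has two genuine gaps, and it misses the route the paper actually takes. First, the key claim ``in a planar surface, a b-arc $b$ from $v$ to $\Omega_i\in\partial'\Delta$ must lie inside $\Delta$'' is unjustified and is in fact false in the one case where it matters most: for $i=m$, the paper shows (using $\sgn(h_-)=-1$ and Observation~\ref{obs of sign}) that $\mathcal P(b_{1-2\e})$ lies \emph{outside} $\mathcal P(\Delta)$, on the opposite side of $\mathcal P(b_0)$ near $v$. Planarity of $S$ does not confine an arc with endpoints on a single boundary circle $C$ to one side; it merely forces such an arc to be separating. Second, even granting your containment claim, the deduction that $b$ is therefore inessential does not follow: $\Delta$ contains leaves of $\F(D)$ and braid points of $\partial D$ (this is precisely why $b_0$ is essential despite being non-strongly essential), so the subdisc of $\Delta$ cut off by $b$ can still contain points of $S_t\cap\partial D$, and $b$ may remain essential. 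Essentiality of $\F(D)$ is therefore the wrong invariant to contradict here.

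The paper's argument is simpler and splits into two cases. If $\Omega_i\in\partial'\Delta$ for some $i=1,\dots,m-1$, then both endpoints of the b-arc $b_{t_i+\e}$ lie on the same binding component $C$ (the one carrying $\partial'\Delta$ and $v$), and in a planar page any such arc is separating; this directly contradicts property {\bf (P1)}, since $t_i+\e\in(t_1,t_m)$. For $i=m$, (P1) is unavailable (the relevant b-arc lives at $t>t_m$), so the paper instead observes that since the interval $[1-2\e,1]$ contains only the single hyperbolic point $h_-$, the projected arcs $\mathcal P(b_{1-2\e})$ and $\mathcal P(b_0)$ are disjoint, so $\Omega_m\in\partial'\Delta$ would force $\mathcal P(b_{1-2\e})\subset\mathcal P(\Delta)$; but $\sgn(h_-)=-1$ together with Observation~\ref{obs of sign} puts $\mathcal P(b_{1-2\e})$ on the wrong side of $\mathcal P(b_0)$, a contradiction. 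You would need both the separating-arc observation (for $i<m$) and the sign-of-$h_-$ observation (for $i=m$); the innermost-disc argument you sketch does not supply either, and the essentiality contradiction you aim for is not actually a contradiction.
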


\begin{proof} 
Assume that $\Omega_{i} \in \partial' \Delta$ for some $i=1,\ldots,m-1$. 
Since $S$ is planar the b-arc $b_{t_{i}+\varepsilon}$ connecting $v$ and $\Omega_{i}$ is separating, which contradicts {\bf (P1)}.

Next we show that $\Omega_{m} \not \in \partial' \Delta$. 
Using the projection $\mathcal P$ in (\ref{projection}) we compare objects in different pages. 
Since the family $\{b_t \ | \ 1-2\e \leq t \leq 1\sim 0\}$ contains only one hyperbolic point which is $h_-$, the interiors of $\mathcal P(b_{1-2\e})$ and $\mathcal P(b_0)$ have zero geometric intersection. 
Therefore, if $\Omega_{m} \in \partial' \Delta$ then the b-arc $\mathcal P(b_{1-2\e})$ must be included in $\mathcal P(\Delta)$.
However, since $h_{-}$ is a negative hyperbolic point,  Observation~\ref{obs of sign} implies that $\mathcal P(b_{1-2\e})$ must lie on the right of $\mathcal P(b_0)$ near $v$ (i.e., $\mathcal P(b_{1-2\e})$ is outside $\mathcal P(\Delta)$), which is a contradiction.
\end{proof}

Now by modifying $D$ we construct a new embedded disc $D_*$.
The elliptic points $v, \Omega_0,\dots,\Omega_m$ and the b-arc $b_0 \subset S_0$ are kept the same under the construction.  
However $b_{0}$ of the new disc $D_*$ will become non-essential.   
We do this by moving the the set of leaves $A$ out of $\Delta$ at the cost of introducing new singular points. 
The disc $D_*$ may not be a transverse overtwisted disc, but is similar to a transverse overtwisted disc in the sense that $\partial D_*$ violates the Bennequin-Eliashberg inequality. 



The definition of $D_*$ is given in four steps of movie presentations. \\

\textbf{Step 1}: Movie for $t \in [0,t_1+\varepsilon]$. \\

See Figure \ref{fig:step1}, where the left column depicts a  movie presentation of $D$ for $t \in [0,t_1+\e]$ near $\Delta\cup b_{t_1+\e}$. We can identify $\{ b_t \ | \ 0 \leq t < t_1 \} \cong (\partial \Delta \setminus \partial'\Delta) \times [0, t_1)$ 
and may assume the region $\Delta \times [0,t_{1})$ contains no hyperbolic points. 
Thus $D \cap (\Delta \times [0, t_{1})) \cong A \times [0, t_{1})$.

A movie presentation of $D_*$ for $t \in [0,t_1+\e]$ near $\Delta\cup b_{t_1+\e}$ is given in the right column of Figure~\ref{fig:step1}. 
There are no leaves in $\Delta \times \{t\}$ for $t\in[0,t_1)$, i.e., no elliptic points on $\partial'\Delta$.  
Instead, for each page new elliptic points $x_{1}^{(1)},\ldots,x_{n}^{(1)}$  with $\sgn(x^{(1)}_j)=\sgn(x_j)$ and a copy of the leaves $A$ are placed on the right-hand side of $\Omega_1$. 
\begin{figure}[htbp]
\begin{center}
\SetLabels
(.27*.97) {\scriptsize $0\leq t < t_1-\varepsilon$}\\
(0.035*0.95) $\Omega_0$\\
(0.13*0.88) {\rotatebox{-180}{$A^\Box$}}\\
(0.04*0.9) $x_{1}$\\
(0.04*0.81) $x_{n}$\\
(0.04*0.74) {\large $v$}\\
(0.42*0.73) $\Omega_{1}$\\
(.27*.61) {\scriptsize $t = t_1-\varepsilon$}\\
(0.035*0.61) $\Omega_{0}$\\
(0.13*0.54) {\rotatebox{-180}{$A^\Box$}}\\
(0.04*0.56) $x_{1}$\\
(0.04*0.47) $x_{n}$\\
(0.04*0.4) {\large $v$}\\
(0.42*0.39) $\Omega_{1}$\\
(0.2*0.39) $+$\\
(0.25*0.36) $h_{1}$\\
(.27*.28) {\scriptsize $t_1< t \leq t_1 + \e$}\\
(0.035*0.27) $\Omega_{0}$\\
(0.13*0.2) {\rotatebox{-180}{$A^\Box$}}\\
(0.04*0.22) $x_{1}$\\
(0.04*0.14) $x_{n}$\\
(0.04*0.06) {\large $v$}\\
(0.42*0.05) $\Omega_{1}$\\
(.27*.09) $b_{t_1+\e}$\\
(0.58*0.95) $\Omega_{0}$\\
(.8*.97) {\scriptsize $0\leq t < t_1-\varepsilon$}\\
(.88*.85) $A^\Box$\\
(0.98*0.9) $x_{n}^{(1)}$\\
(0.98*0.81) $x_{1}^{(1)}$\\
(0.58*0.74) {\large $v$}\\
(0.98*0.73) $\Omega_{1}$\\
(0.58*0.61) $\Omega_{0}$\\
(.8*.61) {\scriptsize $t = t_1-\varepsilon$}\\
(0.58*0.4) {\large $v$}\\
(0.98*0.39) $\Omega_{1}$\\
(0.98*0.56) $x_{n}^{(1)}$\\
(0.98*0.47) $x_{1}^{(1)}$\\
(.88*.5) $A^\Box$\\
(0.75*0.39) $+$\\
(0.8*0.36) $h_{1}$\\
(0.58*0.27) $\Omega_{0}$\\
(.8*.28) {\scriptsize $t_1< t \leq t_1 + \e$}\\
(0.58*0.06) {\large $v$}\\
(0.98*0.05) $\Omega_{1}$\\
(0.98*0.22) $x_{n}^{(1)}$\\
(0.98*0.13) $x_{1}^{(1)}$\\
(.88*.16) $A^\Box$\\
(.75*.09) $b_{t_1+\e}$\\
(0.25*-0.04) (Movie of $D$)\\
(0.75*-0.04) (Movie of $D_*$)\\
\endSetLabels
\strut\AffixLabels{\includegraphics*[width=135mm]{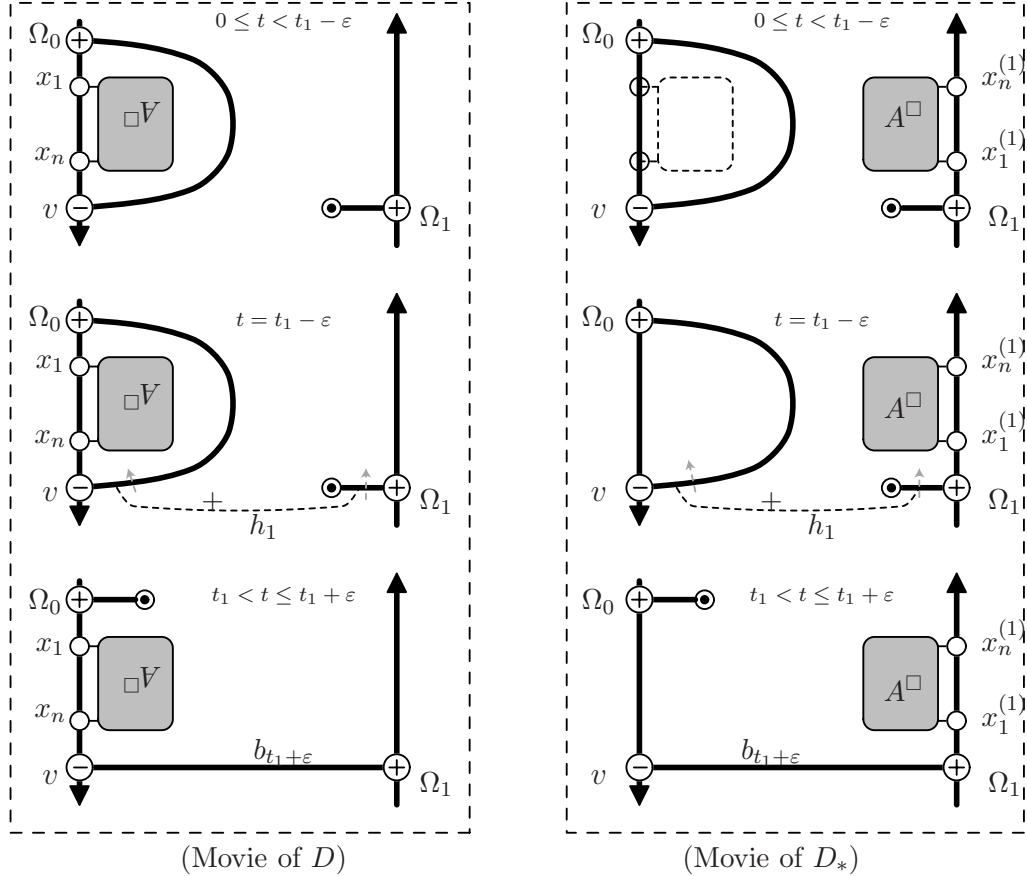}}
\vspace{0.3cm}
\caption{{\bf Step 1}: The movie presentations of $D$ and $D_*$ for $t\in [0,t_{1}+\varepsilon]$. Dashed gray arrows in the middle row indicate normals $\vec n_D$ and $\vec n_{D_*}$.}
\label{fig:step1}
\end{center}
\end{figure}

Away from the neighborhood of $\Delta\cup b_{t_1+\e}$ the disks $D$ and $D_*$ have the same movie presentation, except that we put elliptic points $x_1^{(i)},\dots,x_n^{(i)}$ with $\sgn(x^{(i)}_j)=\sgn(x_j)$ and a copy of $A$ on the right-hand side of $\Omega_i$ for each $i=2,\ldots,m$, see Figure~\ref{fig:step1-2}. 
These copies of $A$ will be used in {\bf Step 3} to ``switch legs''.
The property {\bf(P2)} guarantees that $x_{1}^{(i)},\ldots,x_{n}^{(i)} \not\in \partial' \Delta$ for all $i=1,\ldots,m$. 
\begin{figure}[htbp]
\begin{center}
\SetLabels
(.2*.16) $\Omega_i$\\
(1.04*.16) $\Omega_i$\\
(1.05*.32) $x_1^{(i)}$\\
(1.05*.5) $x_2^{(i)}$\\
(1.05*.72) $x_n^{(i)}$\\
(.86*.55) $A^\Box$\\
(0*.9) $D\cap S_t$\\
(.7*.9) $D_* \cap S_t$\\
\endSetLabels
\strut\AffixLabels{\includegraphics*[width=80mm]{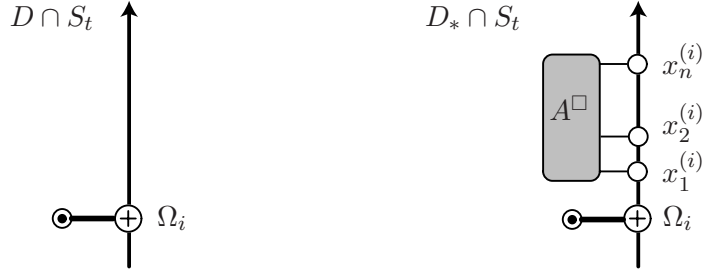}}
\caption{{\bf Step 1}: 
In the page $S_t$ for $t\in [0, t_1+\e]$ we put elliptic points $x_1^{(i)},\ldots, x_n^{(i)}$ and a copy of $A$ on the right-hand side of $\Omega_i$ for each $i=1,\dots,m$.}
\label{fig:step1-2}
\end{center}
\end{figure}
%

\textbf{Step 2}: Movie for $t \in [t_{i}+\e, t_{i+1}-\e)$ $(i=1,2,\ldots,m-1)$ and $[t_{m}+\varepsilon,1-2\e)$.\\

The left sketch of Figure \ref{fig:slideleg} depicts the slice $D \cap S_t$ for $t \in [t_{i}+\e, t_{i+1}-\e)$ near $\partial'\Delta \cup b_t$.  
For $t \in [0,1]$ let $X_t \subset S_t$ be the set of leaves connected to $\partial'\Delta$ (i.e., end at the elliptic points $x_1,\ldots,x_n$). 
For $t \in [0,t_1]$ the set $X_t$ is identical to $A$. 
For $t\in(t_1, 1]$ the set $X_t$ may not be the same as $A$ because the leaf from $x_i$ in the page $S_t$ may be a singular leaf and contain a hyperbolic point. 
In movie presentations  $X_t$ is denoted by a leaf box labeled $X^\Box$ or $X_t^\Box$ (we often omit the subscript $t$ for simplicity).

By {\em legs} of $X$ we mean subset of $X$ near $x_1,\ldots,x_n$ ($n$ arcs coming out of the leaf box for $X$).  
The points $x_1,\ldots,x_n$ are called {\em feet} of the legs.
We construct $D_*$ for the interval $[t_{i}+\e, t_{i+1}-\e)$ by sliding the legs of  $X_t \subset S_t$ along the b-arc $b_t$, see the right sketch of Figure \ref{fig:slideleg}. 
This can be made possible due to {\bf (P2)}.
\begin{figure}[htbp]
 \begin{center}
 \SetLabels
(0.05*0.8) $\Omega_{0}$\\
(0.56*0.8) $\Omega_{0}$\\
(0.22*0.56) $X_t^\Box$\\
(0.22*0.26) {\large $b_t$}\\
(0.8*0.12) {\large $b_t$}\\
(0.72*0.56) $X_t^\Box$\\
(0.05*0.65) $x_{1}$\\
(0.05*0.5) $x_{n}$\\
(0.95*0.5) $x_{n}^{(i)}$\\
(0.95*0.3) $x_{1}^{(i)}$\\
(0.05*0.17) {$v$}\\
(0.55*0.17) {$v$}\\
(0.46*0.2) $\Omega_{i}$\\
(0.95*0.2) $\Omega_{i}$\\
(0.25*.02) (Slice $D \cap S_t$)\\
(0.75*.02) (Slice $D_*\cap S_t$)\\
\endSetLabels
\strut\AffixLabels{\includegraphics*[width=130mm]{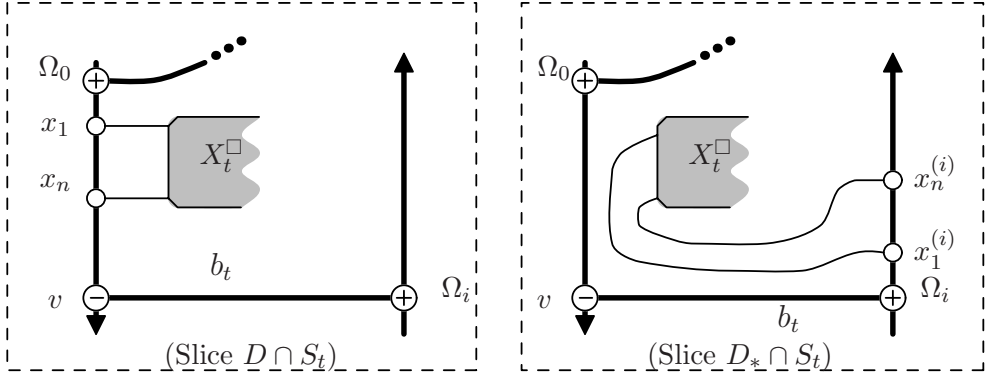}}
\vspace{0.3cm}
\caption{{\bf Step 2}: 
The slices $D_*\cap S_t$ and $D_*\cap S_t$ for $t\in [t_i+\e, t_{i+1}-\e)$.} 
\label{fig:slideleg}
\end{center}
\end{figure}

If for some $t^*\in [t_{i}+\varepsilon ,t_{i+1}-\varepsilon)$ the set of leaves $X_{t^*}$ contains a hyperbolic point $h^*$ then we use the same describing arc of $h^*$ (dashed arc in Figure \ref{fig:step2}) to define a hyperbolic point of the same sign. 
We can do this because the describing arc  does not intersect the b-arc $b_{t^*-\delta}$ where $\delta\ll\e$.
\begin{figure}[htbp]
\begin{center}
\SetLabels
(.35*.95) $\scriptstyle{(t=t^*-\delta)}$\\
(.8*.95) $\scriptstyle{(t=t^*-\delta)}$\\
(.35*.43) $\scriptstyle{(t=t^*+\delta)}$\\
(.8*.43) $\scriptstyle{(t=t^*+\delta)}$\\
(0.07*0.92) $\Omega_{0}$\\
(0.54*0.92) $\Omega_{0}$\\
(0.07*0.85) $x_{1}$\\
(0.07*0.76) $x_{j}$\\
(0.07*0.66) $x_{n}$\\
(0.08*0.59) $v$\\
(0.55*0.59) $v$\\
(0.46*0.6) $\Omega_{i}$\\
(0.95*0.85) $x_{n}^{(i)}$\\
(0.95*0.75) $x_{j}^{(i)}$\\
(0.95*0.66) $x_{1}^{(i)}$\\
(0.95*0.6) $\Omega_{i}$\\
(0.27*0.86) $h^{*}$\\
(0.75*0.86) $h^{*}$\\
(0.71*0.78) $\pm$\\
(.25*.56) $b_{t^*-\delta}$\\
(.25*.04) $b_{t^*+\delta}$\\
(0.07*0.4) $\Omega_{0}$\\
(0.54*0.4) $\Omega_{0}$\\
(0.07*0.33) $x_{1}$\\
(0.07*0.23) $x_{j}$\\
(0.07*0.14) $x_{n}$\\
(0.08*0.07) $v$\\
(0.55*0.07) $v$\\
(0.46*0.1) $\Omega_{i}$\\
(0.95*0.33) $x_{n}^{(i)}$\\
(0.95*0.25) $x_{j}^{(i)}$\\
(0.95*0.15) $x_{1}^{(i)}$\\
(0.95*0.1) $\Omega_{i}$\\
(0.25*-0.04) (Movie of $D$)\\
(0.75*-0.04) (Movie of $D_*$)\\
(.75*.56) $b_{t^*-\delta}$\\
(.75*.04) $b_{t^*+\delta}$\\
\endSetLabels
\strut\AffixLabels{\includegraphics*[width=130mm]{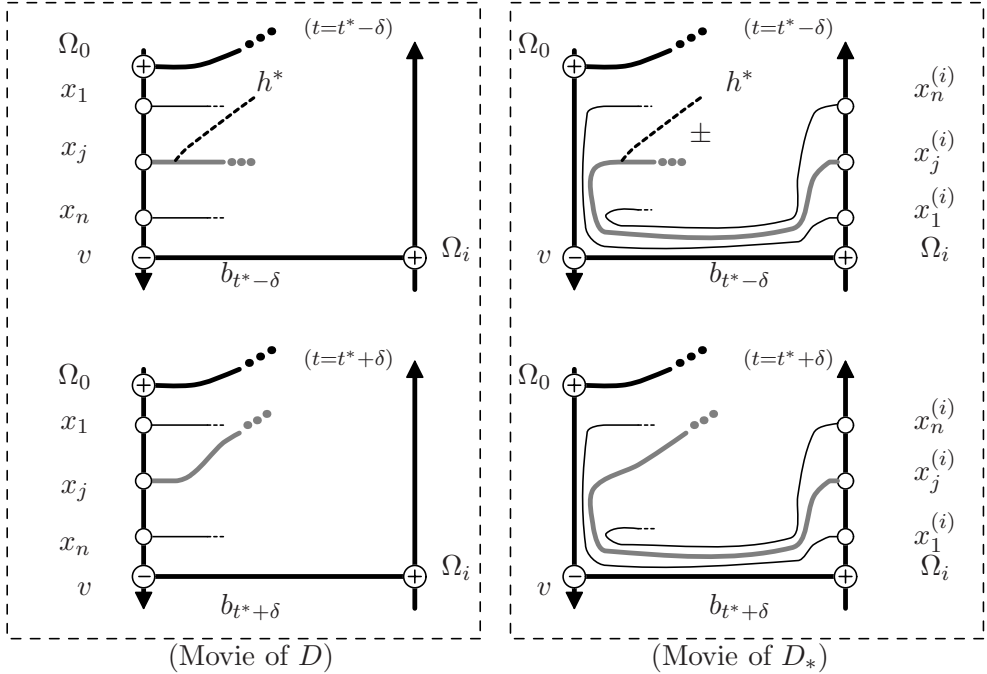}}
\vspace{0.3cm}
\caption{{\bf Step 2}: Treatment of a hyperbolic point where $\delta \ll \e$. } 
\label{fig:step2}
\end{center}
\end{figure}

Away from the neighborhood of $\partial'\Delta\cup b_{t}$ we define $D_*$ so that the movie presentations of $D$ and $D_*$ are identical, except that on the right-hand side of $\Omega_k$ for each $k\neq i$ the elliptic points $x_1^{(k)}, \ldots, x_n^{(k)}$ and a copy of $A$ are placed (see Figure~\ref{fig:step2-2}).
\begin{figure}[htbp]
\begin{center}
\SetLabels
(.2*.16) $\Omega_k$\\
(1.04*.16) $\Omega_k$\\
(1.05*.32) $x_1^{(k)}$\\
(1.05*.5) $x_2^{(k)}$\\
(1.05*.72) $x_n^{(k)}$\\
(.85*.55) $A^\Box$\\
(0*.9) $D\cap S_t$\\
(.7*.9) $D_* \cap S_t$\\
\endSetLabels
\strut\AffixLabels{\includegraphics*[width=80mm]{step1-2.eps}}
\caption{{\bf Step 2}: 
In the page $S_t$ for $t\in [t_i+\e, t_{i+1}-\e ]$ we put a copy of $A$ on the right-hand side of $\Omega_k$ for each $k \neq i$. \\
{\bf Step 3}:
In the page $S_t$ for $t\in [t_i-\e, t_i+\e ]$ we put a copy of $A$ on the right-hand side of $\Omega_k$ for each $k \neq i-1$, $i$.\\
{\bf Step 4}: 
In the page $S_t$ for $t\in [1-2\e, 1]$ we put a copy of $A$ on the right-hand side of $\Omega_k$ for each $k \neq m$.}
\label{fig:step2-2}
\end{center}
\end{figure}
\\

\textbf{Step 3}: Movie for $t\in [t_{i}-\varepsilon,t_{i}+\varepsilon]$ where $i=2,\ldots,m$.\\

On each page $S_t$ we put a copy of $A$ on the right-hand side of $\Omega_k$ for each $k \neq i-1$, $i$ as depicted in Figure~\ref{fig:step2-2}.

The construction of $D_*$ near $\Omega_{i-1}$ and $\Omega_i$ is summarized in Figure~\ref{fig:step3result}. 
The detail is described in two sub-steps. 
Since $h_{t_i}$ is the only hyperbolic point of $\F(D)$ in the interval $[t_{i}-\e,t_{i}+\e]$, we have $X_t \cong X_{t_i-\e}$ for all $t \in [t_{i}-\e,t_{i}+\e]$. 
We may denote $X_t$ simply by $X$. 
\begin{figure}[htbp]
 \begin{center}
 \SetLabels
(0.07*0.91) $\Omega_{0}$\\
(0.56*0.91) $\Omega_{0}$\\
(.2*.1) \scriptsize $t=t_i-\e$\\
(.67*.1) \scriptsize $t=t_i+\e$\\
(0.17*0.71) $X^\Box$\\
(0.67*0.71) $X^\Box$\\
(0.84*0.61) $A^\Box$\\
(0.33*0.22) $A^\Box$\\
(0.05*0.5) {\large $v$}\\
(0.57*0.5) {\large $v$}\\
(0.45*0.04) $\Omega_{i}$\\
(0.95*0.04) $\Omega_{i}$\\
(0.96*0.5) $\Omega_{i-1}$\\
(0.47*0.5) $\Omega_{i-1}$\\
(0.96*0.73) $x_{n}^{(i-1)}$\\
(0.96*0.6) $x_{1}^{(i-1)}$\\
(0.95*0.3) $x_{n}^{(i)}$\\
(0.95*0.16) $x_{1}^{(i)}$\\
(0.47*0.73) $x_{n}^{(i-1)}$\\
(0.47*0.6) $x_{1}^{(i-1)}$\\
(0.46*0.3) $x_{n}^{(i)}$\\
(0.46*0.16) $x_{1}^{(i)}$\\
\endSetLabels
\strut\AffixLabels{\includegraphics*[width=130mm]{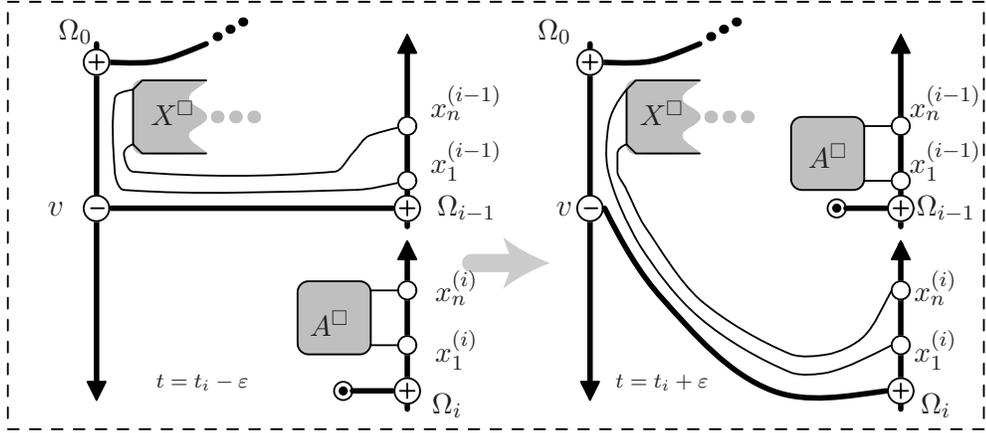}}
\caption{A movie presentation of $D_*$ before and after {\bf Step 3} near $\Omega_{i-1}$ and $\Omega_i$.  } 
\label{fig:step3result}
\end{center}
\end{figure}

{\bf Step 3-1}: Movie for $t\in [t_{i}-\varepsilon, t_{i}+\frac{1}{2}\varepsilon]$ near $\Omega_{i-1}$ and $\Omega_i$.\\

See Figure \ref{fig:step3-1}. 
For each page $S_t$ near $\Omega_{i-1}$ and $\Omega_i$ the slice $D_* \cap S_t$ is exactly the same as $D \cap S_t$ except that a copy of $A$ is added on the right-hand side of $\Omega_i$ and the legs of $X$ are moved to the right-hand side of $ \Omega_{i-1}$.  \\
\begin{figure}[htbp]
\begin{center}
\SetLabels
(0.04*0.92) $\Omega_{0}$\\
(0.56*0.92) $\Omega_{0}$\\
(.3*.95) \scriptsize $t=t_i-\e$\\
(.82*.95) \scriptsize $t=t_i-\e$\\
(0.16*0.85) $X^{\Box}$\\
(0.68*0.85) $X^{\Box}$\\
(0.05*0.88) $x_{1}$\\
(0.05*0.83) $x_{n}$\\
(0.05*0.75) $v$\\
(0.56*0.75) $v$\\
(0.46*0.76) $\Omega_{i-1}$\\
(0.44*0.53) $\Omega_{i}$\\
(0.96*0.53) $\Omega_{i}$\\
(.96*.58) $x^{(i)}_1$\\
(.96*.66) $x^{(i)}_n$\\
(.98*0.85) $x_{n}^{(i-1)}$\\
(.98*0.79) $x_{1}^{(i-1)}$\\
(0.98*0.73) $\Omega_{i-1}$\\
(0.2*0.68) $h_{i}$\\
(0.18*0.65) $+$\\
(0.72*0.68) $h_{i}$\\
(0.68*0.65) $+$\\
(.85*.62) $A^\Box$\\
(.3*.76) $b_t$\\
(.8*.72) $b_t$\\
(0.04*0.43) $\Omega_{0}$\\
(0.56*0.43) $\Omega_{0}$\\
(.3*.42) \scriptsize $t=t_i+\frac{\e}{2}$\\
(.82*.42) \scriptsize $t=t_i+\frac{\e}{2}$\\
(0.15*0.35) $X^{\Box}$\\
(0.67*0.35) $X^{\Box}$\\
(0.05*0.39) $x_{1}$\\
(0.05*0.32) $x_{n}$\\
(0.05*0.25) $v$\\
(0.56*0.25) $v$\\
(0.46*0.25) $\Omega_{i-1}$\\
(0.44*0.05) $\Omega_{i}$\\
(0.96*0.05) $\Omega_{i}$\\
(.96*.1) $x^{(i)}_1$\\
(.96*.17) $x^{(i)}_n$\\
(0.98*0.36) $x_{n}^{(i-1)}$\\
(0.98*0.3) $x_{1}^{(i-1)}$\\
(0.98*0.25) $\Omega_{i-1}$\\
(0.25*-0.04) (Movie of $D$)\\
(0.75*-0.04) (Movie of $D_*$)\\
(.85*.13) $A^\Box$\\
(.3*.06) $b_t$\\
(.8*.02) $b_t$\\
\endSetLabels
\strut\AffixLabels{\includegraphics*[width=110mm]{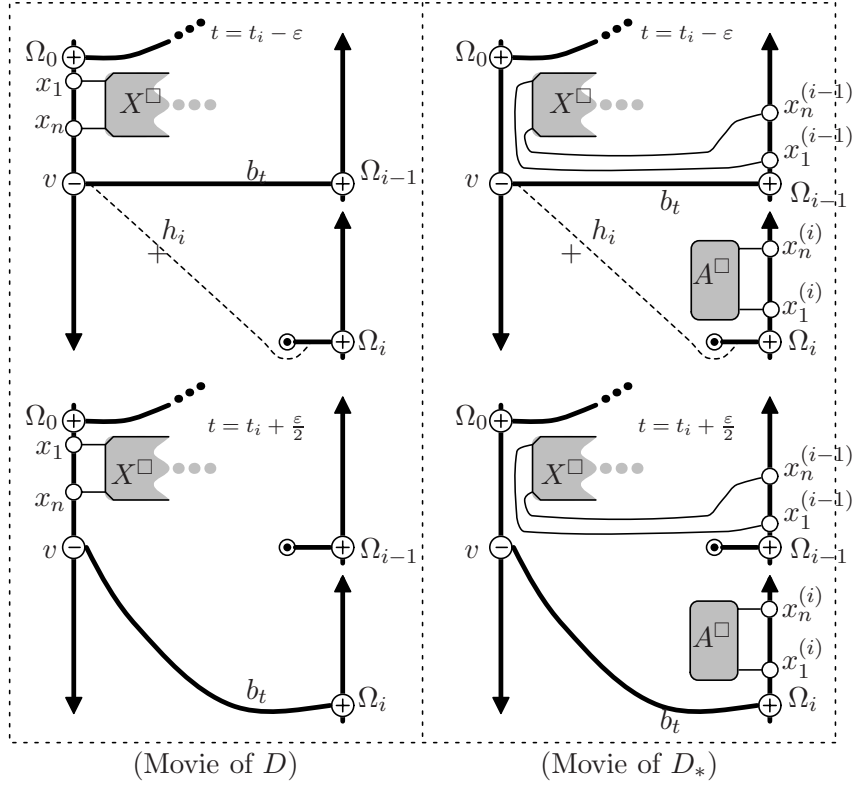}}
\vspace{0.3cm}
\caption{{\bf Step 3-1} for $t\in[t_{i}-\varepsilon, t_{i}+\frac{1}{2}\varepsilon]$.}
\label{fig:step3-1}
\end{center}
\end{figure}

{\bf Step 3-2}: Movie for $t \in (t_i+\frac{\e}{2}, t_i+\e]$ near $\Omega_{i-1}$ and $\Omega_i$. \\

{\bf (3-2-1)}: 
Suppose that the leaf of $\F(D)$ from $x_1$ in the page $S_0$ (i.e., in the set of leaves $A$) is an a-arc. 
Then in the page $S_{t_i+\e/2}$ the leaf from $x_{1}^{(i)}$ must be an a-arc. 
In the page $S_{t_i + \e/2}$
join this a-arc and the leg of $X$ landing on $x_{1}^{(i-1)}$  by a $(+)$ describing arc that lies near $b_{t_i+\e/2}$ as in Figure~\ref{fig:step3-2b}-(1). 
After changing the configuration, the leaf from $x_{1}^{(i-1)}$ becomes an a-arc and $x_1^{(i)}$ is connected to $X$ as in Figure~\ref{fig:step3-2b}-(2). 
\begin{figure}[htbp]
\begin{center}
\SetLabels
(.03*.93) (1)\\
(.54*.93) (2)\\
(0.07*0.85) $\Omega_{0}$\\
(0.56*0.85) $\Omega_{0}$\\
(0.22*0.75) $X^\Box$\\
(0.71*0.75) $X^\Box$\\
(0.08*0.5) $v$\\
(0.56*0.5) $v$\\
(0.47*0.5) $\Omega_{i-1}$\\
(0.45*.08) $\Omega_{i}$\\
(0.95*0.08) $\Omega_{i}$\\
(0.96*0.75) $x_{j}^{(i-1)}$\\
(0.96*0.6) $x_{1}^{(i-1)}$\\
(0.96*0.5) $\Omega_{i-1}$\\
(0.95*0.32) $x_{j}^{(i)}$\\
(0.95*0.16) $x_{1}^{(i)}$\\
(0.47*0.75) $x_{j}^{(i-1)}$\\
(0.47*0.6) $x_{1}^{(i-1)}$\\
(0.47*0.5) $\Omega_{i-1}$\\
(0.46*0.32) $x_{j}^{(i)}$\\
(0.46*0.16) $x_{1}^{(i)}$\\
(0.25*0.42) {\footnotesize $(b_{t_{i}-\varepsilon})$}\\
(0.23*0.05) $b_{t_{i}+\e/2}$\\
\endSetLabels
\strut\AffixLabels{\includegraphics*[width=130mm]{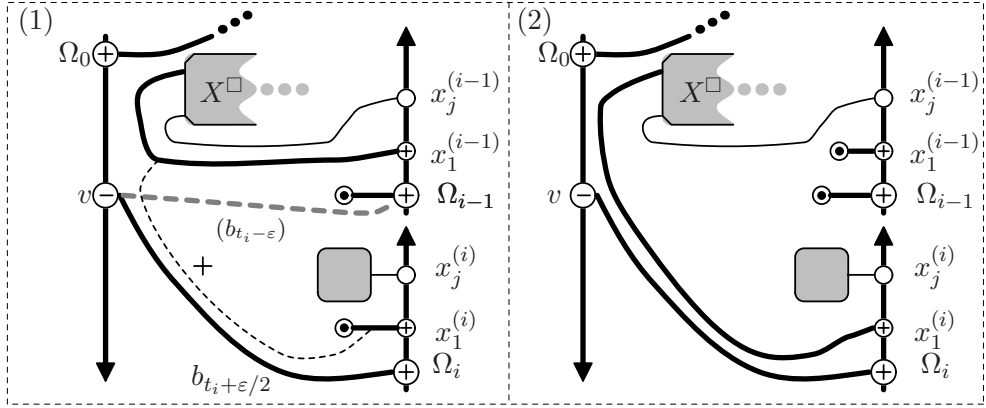}}
\caption{{\bf Step 3-2-1}: Movie of $D_*$.} 
\label{fig:step3-2b}
\end{center}
\end{figure}

{\bf (3-2-2)}: 
Suppose that the leaf of $\F(D)$ from $x_1$ in the page $S_0$ is a b-arc, $b$. 
Let $x_j \in \partial' \Delta$ be the other endpoint of $b$. 
We have $\sgn(x_1) = -\sgn(x_j)$. 
Let $A'$ denote the subset of $A$ enclosed by $b$. 
Put $A'':= A \setminus (A' \cup b)$. 
As a result of {\bf Step 3-1}, in the slice $D_*\cap S_{t_i+\e/2}$ the elliptic points $x_1^{(i)}$ and $x_j^{(i)}$ are joined by a b-arc, $b^{(i)}$, and $b^{(i)}$ encloses a copy of $A'$ as depicted in Figure~\ref{fig:step3-2c}-(1). 
\begin{figure}[htbp]
\begin{center}
\SetLabels
(-.08*1) (1)\\
(.6*1) (2)\\
(.6*.45) (3)\\
(-.08*.45) (4)\\
(0.52*.69) (a)\\
(0.83*0.5) {\large (b)}\\
(0.52*.35) {\large (c)}\\
(-0.02*0.94) $\Omega_{0}$\\
(0.6*0.94) $\Omega_{0}$\\
(0.14*0.9) $X^\Box$\\
(0.75*0.9) $X^\Box$\\
(-0.02*0.8) {$v$}\\
(0.6*0.8) {$v$}\\
(0.42*0.79) $\Omega_{i-1}$\\
(1.04*0.79) $\Omega_{i-1}$\\
(0.4*0.56) $\Omega_{i}$\\
(1.02*0.56) $\Omega_{i}$\\
(0.42*0.9) $x_{j}^{(i-1)}$\\
(0.42*0.83) $x_{1}^{(i-1)}$\\
(1.04*0.9) $x_{j}^{(i-1)}$\\
(1.04*0.83) $x_{1}^{(i-1)}$\\
(0.41*0.68) $x_{j}^{(i)}$\\
(0.41*0.61) $x_{1}^{(i)}$\\
(1.03*0.68) $x_{j}^{(i)}$\\
(1.03*0.61) $x_{1}^{(i)}$\\
(0.14*0.61) {\footnotesize $b_{t_{i}+\e/2}$}\\
(.26*.66) $b^{(i)}$\\
(0.32*0.64) $A'$\\
(0.93*0.64) $A'$\\
(.17*.69) $h$\\
(-0.02*0.39) $\Omega_{0}$\\
(0.6*0.39) $\Omega_{0}$\\
(0.14*0.35) $X^\Box$\\
(0.75*0.35) $X^\Box$\\
(-0.02*0.25) $v$\\
(0.6*0.25) $v$\\
(0.42*0.24) $\Omega_{i-1}$\\
(1.04*0.24) $\Omega_{i-1}$\\
(0.4*0.01) $\Omega_{i}$\\
(1.02*0.01) $\Omega_{i}$\\
(0.42*0.35) $x_{j}^{(i-1)}$\\
(0.42*0.27) $x_{1}^{(i-1)}$\\
(1.04*0.35) $x_{j}^{(i-1)}$\\
(1.04*0.27) $x_{1}^{(i-1)}$\\
(0.41*0.12) $x_{j}^{(i)}$\\
(0.41*0.04) $x_{1}^{(i)}$\\
(1.03*0.12) $x_{j}^{(i)}$\\
(1.03*0.04) $x_{1}^{(i)}$\\
(0.32*0.3) $A'$\\
(0.925*0.3) $A'$\\
(.315*.72) $A''$\\
(.315*.16) $A''$\\
(.93*.72) $A''$\\
(.93*.16) $A''$\\
(.23*.28) $b^{(i-1)}$\\
(0.88*0.28) $\overline{h}$ \\
\endSetLabels
\strut\AffixLabels{\includegraphics*[width=120mm]{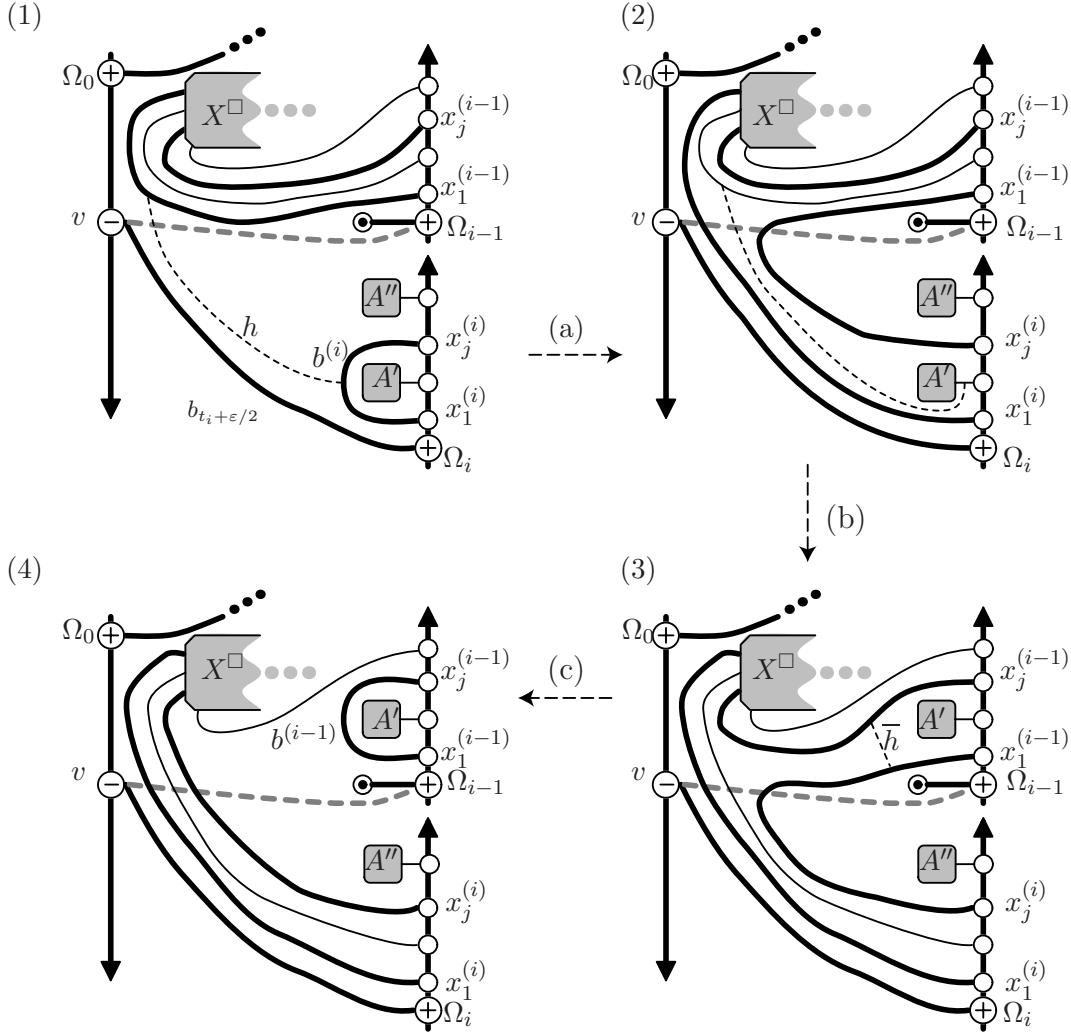}}
\caption{{\bf Step 3-2-2}: Movie of $D_*$.} 
\label{fig:step3-2c}
\end{center}
\end{figure}

\begin{enumerate}
\item[(a)]
Join the b-arc $b^{(i)}$ and the leg $X$ landing on $x_{1}^{(i-1)}$ by a describing arc lying  near $b_{t_i+\e}$ as in Figure~\ref{fig:step3-2c}-(1). 
Call the resulting hyperbolic point $h$. 
Note that $\sgn(h)=\sgn(x_1)$. 
As in the passage (a) of Figure~\ref{fig:step3-2c} the configuration changes so that the leaf box $X$ and $x_1^{(i)}$ are connected, and $x_j^{(i)}$ and $x_1^{(i-1)}$ are joined by a b-arc.
 
\item[(b)] 
Apply to the legs of $A'$ the operations of {\bf Step 3-2-1} or {\bf Step 3-2-2} (a) until $A'$  is completely moved from near $\Omega_i$ to near $\Omega_{i-1}$. 
See the passage (b) of Figure~\ref{fig:step3-2c}.

\item[(c)] 
Introduce a hyperbolic point, $\overline h$, between the leaves from $x_{1}^{(i-1)}$ and $x_{j}^{(i-1)}$ to enclose $A'$ by a b-arc, $b^{(i-1)}$. See the passage (c) of Figure~\ref{fig:step3-2c}. 
The sign of the hyperbolic point $\overline h$ is opposite to that of $h$. Therefore, 
\begin{equation}\label{sign of h}
\sgn(h)=-\sgn(\overline{h}) = \sgn(x_1).
\end{equation}

\item[(d)]
Apply the above (a, b, c) to the leaf box $A''$.
\end{enumerate}

As a consequence, the entire leaf box $A$ is moved to the right-hand side of $\Omega_{i-1}$ from the right-hand side of $\Omega_i$ and the leaves from $x^{(i-1)}_{1},\ldots,x^{(i-1)}_{n}$ are connected to $A$. See Figure~\ref{fig:step3result}. \\

{\bf Step 4}: Movie for  $t\in [1-2\varepsilon,1]$.\\

In the interval $[1-2\varepsilon,1]$ there is only one hyperbolic point $h_{-} \in S_{1-\e}$.  
Its describing arc is depicted in the upper-left sketch of Figure~\ref{fig:step4}. 
Recall {\bf (Step 2)} for $t\in[t_{m}+\e,1-2\varepsilon]$ where  the slice $D_* \cap S_{1-2\varepsilon}$ is obtained from the slice $D \cap S_{1-2\varepsilon}$ by sliding the legs of $A$ along $b_{1-2\varepsilon}$.
We give $D_*$ a negative hyperbolic point (we call it $h_-$ with the same name) in the page $S_{1-\e}$ defined by the describing arc as depicted in the upper-right sketch of Figure~\ref{fig:step4}. 
The region between the describing arc and the binding component $\partial' \Delta$ contains no leaves. 
Therefore, the b-arc $b_0 \subset S_0$ of $D_*$ is boundary parallel in $S_0 \setminus (S_0 \cap D_*)$, in other words $b_0$ is non-essential.  
\begin{figure}[htbp]
\begin{center}
\SetLabels
(.3*.93) \scriptsize $t=1-2\e$\\
(.84*.93) \scriptsize $t=1-2\e$\\
(.3*.4) \scriptsize $t=1$\\
(.84*.4) \scriptsize $t=1$\\
(0.035*0.91) $\Omega_{0}$\\
(.16*.77) \rotatebox{180}{$A^\Box$}\\
(0.04*0.82) $x_{1}$\\
(0.04*0.69) $x_{n}$\\
(0.04*0.6) {\large $v$}\\
(0.43*0.61) $\Omega_{m}$\\
(0.24*0.82) $h_{-}$\\
(0.03*0.41) $\Omega_{0}$\\
(.16*.27) \rotatebox{180}{$A^\Box$}\\
(0.04*0.32) $x_{1}$\\
(0.04*0.18) $x_{n}$\\
(0.04*0.06) {$v$}\\
(0.43*0.05) $\Omega_{m}$\\
(.16*.06) $b_0$\\
(0.57*0.91) $\Omega_{0}$\\
(.72*.7) $A^\Box$\\
(0.96*0.82) $x_{n}^{(m)}$\\
(0.96*0.69) $x_{1}^{(m)}$\\
(0.56*0.6) {\large $v$}\\
(0.96*0.58) $\Omega_{m}$\\
(0.65*0.82) $h_{-}$\\
(0.57*0.41) $\Omega_{0}$\\
(0.56*0.06) {\large $v$}\\
(0.96*0.05) $\Omega_{m}$\\
(0.96*0.32) $x_{n}^{(m)}$\\
(0.96*0.18) $x_{1}^{(m)}$\\
(.7*.06) $b_0$\\
(0.88*0.2) $A^\Box$\\
(0.25*-0.08) (Movie of $D$)\\
(0.75*-0.08) (Movie of $D_*$)\\
\endSetLabels
\strut\AffixLabels{\includegraphics*[width=130mm]{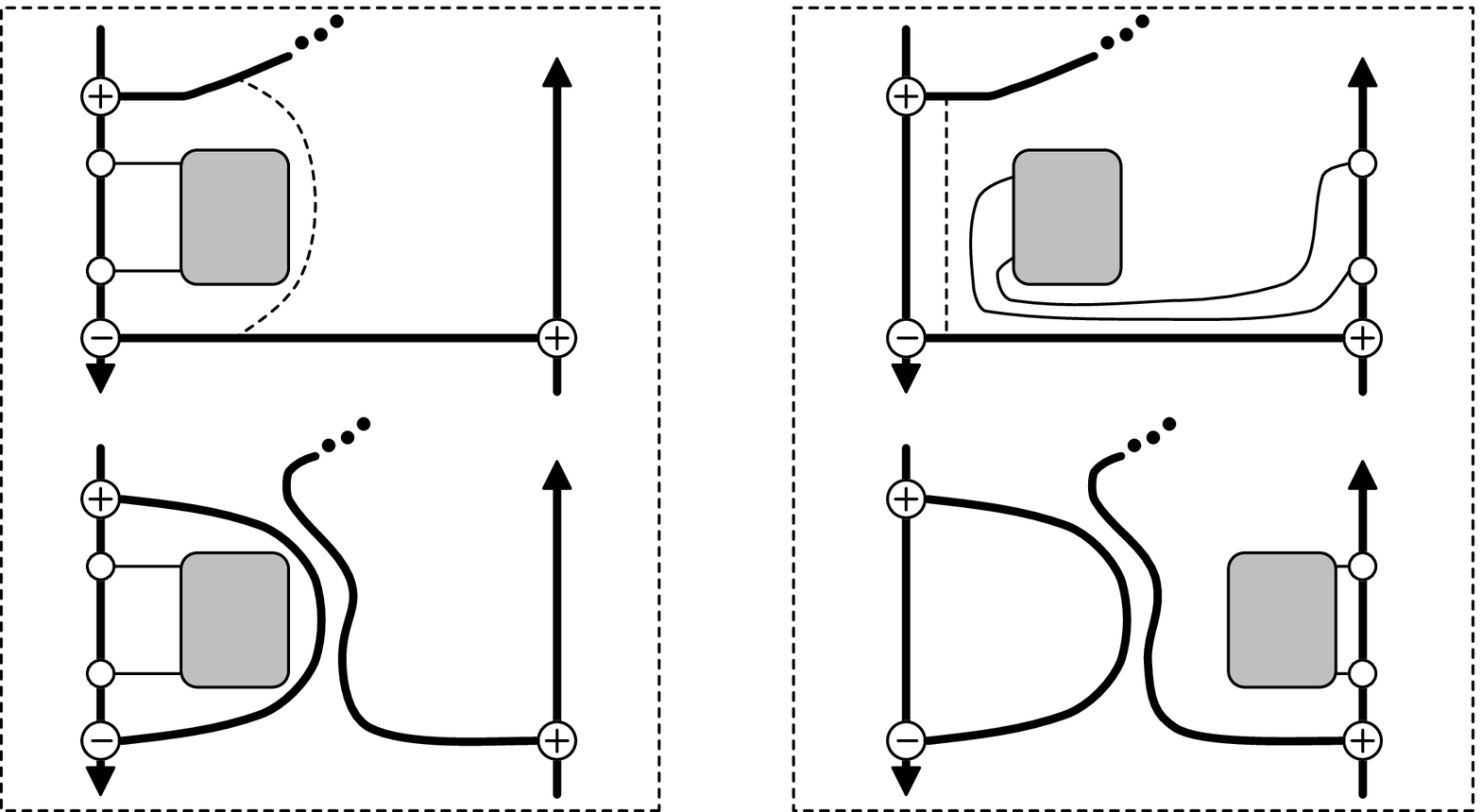}}
\vspace{0.4cm}
\caption{{\bf Step 4:}} 
\label{fig:step4}
\end{center}
\end{figure}

Away from $\partial'\Delta \cup b_{1-2\e}$ we define $D_*$ so that the slices $D\cap S_t$ and $D_* \cap S_t$ are identical except for the added $m$ copies of $A$ on the right-hand side of $\Omega_i$ for each $i=1, \dots, m-1$. See Figure~\ref{fig:step2-2}. 
The copies of $A$ on the page $S_1$ and the copies of $A$ on the page $S_0$ can be identified under the monodoromy $\phi$ since all these copies of $A$ are very close to the binding and $\phi={\rm id}$ near the binding.

Thus $\phi(D_* \cap S_1) = D_* \cap S_0$. We glue $D_* \cap S_1$ and $D_* \cap S_0$ by $\phi$ and obtain a properly embedded surface $D_*$ with non-essential b-arc $b_0$. 
In the next section (Proposition~\ref{prop:foliation}) we will show that $D_*$ is indeed a disc by studying how the open book foliation of $D_*$ is related to the open book foliation of $D$. 
This completes the construction of $D_*$.

\section{Open book foliation of $D_*$}
\label{sec:foliation}

In this section we describe the open book foliation of $D_*$ and explain how $\F(D_*)$ is related to $\F(D)$.
To construct $D_*$ recall that we started with $D$ and removed a neighborhoods of the elliptic points $x_1,\ldots,x_n \in \partial'\Delta$ then added new elliptic points $x^{(i)}_1,\ldots,x^{(i)}_n$ for $i=1,\dots,m$.

\begin{proposition}\label{prop:foliation}
The open book foliation of $D_*$ is obtained from the open book foliation of $D$ with the following changes.
\begin{enumerate}
\item
Near the negative elliptic point $v$ the open book foliation $\F(D_*)$ is identical to $\F(D)$ as depicted in Figure~\ref{fig:nearv}

\item
If the leaf from $x_j$ contained in the page $S_0$ is an a-arc then the change near $x_j$ can be depicted as in Figure~\ref{fig:blowup}-(a). 
In other words, a neighborhood of $x_j$ is replaced by a region consisting of $m$ positive elliptic points and $m-1$ positive hyperbolic points. 

\item
If $x_j$ and $x_k$ are joined by a b-arc in $S_0$ then the change near $x_j$ and $x_k$ can be depicted as in Figure~\ref{fig:blowup}-(b),
In other words, a neighborhood of the b-arc is replaced by a region containing $m$ positive and $m$ negative elliptic points and $m-1$ positive and $m-1$ negative hyperbolic points whose signs are determined by the condition (\ref{sign of h}). 

\end{enumerate}

Therefore, the surface $D_*$ is a disc.
\end{proposition}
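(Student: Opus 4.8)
The plan is to verify the three local claims about $\F(D_*)$ directly from the four-step construction of $D_*$, and then deduce that $D_*$ is a disc by an Euler-characteristic count. First I would check claim (1): by construction, the elliptic points $v,\Omega_0,\dots,\Omega_m$ and the b-arcs $b_t$ ending at $v$ are never touched in Steps 1--4 (all modifications happen near $\partial'\Delta\cup b_t$ and near the copies of $A$, which by \textbf{(P2)} are disjoint from a neighborhood of $v$), so the bb-tile $R_-$ and the ab-tiles $R_1,\dots,R_m$ survive verbatim; this is immediate from Figure~\ref{fig:nearv} and needs only a sentence noting that nothing in the construction enters a neighborhood of $v$.

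Next I would handle claims (2) and (3), which track what happened to a neighborhood of each $x_j\in\partial'\Delta$. The point is bookkeeping: in \textbf{Step 1} each elliptic point $x_j$ was deleted and replaced by $m$ new elliptic points $x_j^{(1)},\dots,x_j^{(m)}$ (one copy of $A$ parked to the right of each $\Omega_i$), all with $\sgn(x_j^{(i)})=\sgn(x_j)$. In \textbf{Steps 2--3} these copies are slid along the b-arcs $b_t$ and merged: if the $A$-leaf at $x_j$ is an a-arc, \textbf{Step 3-2-1} introduces exactly one positive hyperbolic point each time a copy is transferred past an $\Omega_i$, so after all transfers the $m$ elliptic points $x_j^{(1)},\dots,x_j^{(m)}$ are strung together by $m-1$ positive hyperbolic points — this is precisely Figure~\ref{fig:blowup}-(a). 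If instead $x_j$ and $x_k$ are joined by a b-arc in $S_0$, then \textbf{Step 3-2-2} (passages (a)--(d)) moves the enclosed sub-boxes $A',A''$ across, creating the hyperbolic points $h$ and $\overline h$ of opposite signs governed by \eqref{sign of h}; summing over all $m$ transfers yields $m-1$ positive and $m-1$ negative hyperbolic points together with the $m$ positive and $m$ negative elliptic points $x_j^{(i)},x_k^{(i)}$, which is Figure~\ref{fig:blowup}-(b). I would also remark that \textbf{Step 4} affects only the hyperbolic point $h_-$ and the region between its describing arc and $\partial'\Delta$ (now leaf-free), so it does not alter the local pictures at the $x_j$'s, and that the gluing by $\phi$ at $t=0\sim 1$ is legitimate since all copies of $A$ lie in the region where $\phi=\mathrm{id}$.

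Finally, to see that $D_*$ is a disc I would compute its Euler characteristic from the region decomposition. We pass from $D$ to $D_*$ by: deleting $n$ elliptic points and, for each surviving $x_j$ (those with an a-arc in $A$, say there are $a$ of them) adding $m$ elliptic points and $m-1$ hyperbolic points; and for each b-arc pair $\{x_j,x_k\}$ in $A$ (say there are $\beta$ of them, so $a+2\beta=n$) adding $2m$ elliptic points and $2(m-1)$ hyperbolic points. Using $\chi = (\#\text{elliptic}) - (\#\text{hyperbolic})$ for an open book foliation (valid since every region is a tile, annulus, or pants of the appropriate Euler characteristic, cf.\ \cite[Proposition 2.15]{ik1-1} and there are no c-circles), the net change is $-n + a\bigl(m-(m-1)\bigr) + \beta\bigl(2m-2(m-1)\bigr) = -n + a + 2\beta = 0$. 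Hence $\chi(D_*)=\chi(D)=1$. Since $D_*$ is connected (claim (1) shows $v$ is still joined to every $\Omega_i$, and each transferred copy of $A$ stays attached to the rest of $D_*$ throughout the sliding) and embedded with one boundary component (Step 2 and Theorem~\ref{theorem:b-arcfolchange}(4)-type braid isotopies keep $\partial D_*$ a single closed braid, and no boundary components are created), it follows that $D_*$ is a disc. The main obstacle I anticipate is not any single step but the sustained bookkeeping in claim (3): one must make sure that across the iterated applications of passages (a)--(d) to the nested boxes $A',A''$ no extra hyperbolic points are silently created and that the signs assemble exactly as in \eqref{sign of h}, so that the Euler-characteristic cancellation is exact.
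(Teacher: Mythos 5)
Your proposal is correct and follows the same route as the paper's proof: claims (1)--(3) are verified exactly as the paper does, by tracking the four-step movie construction and noting that the only singularities introduced per transfer are the single positive hyperbolic point of Step 3-2-1 (a-arc case) or the pair $h,\overline{h}$ of Step 3-2-2 (b-arc case), with signs fixed by \eqref{sign of h}. The one place you go beyond the paper is the final Euler-characteristic count $\chi(D_*)=\chi(D)$ to justify ``$D_*$ is a disc''; the paper simply asserts this as a consequence of the local disc-by-disc replacements shown in Figure~\ref{fig:blowup}, whereas you make the cancellation $-n+a+2\beta=0$ explicit, which is a harmless and clarifying addition (just note that the identity $\chi(F)=e-h$ for c-circle-free open book foliations comes from the Poincar\'e--Hopf index of the foliation vector field $X_{ob}$, not literally from \cite[Proposition~2.15]{ik1-1}). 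Your worry about the recursion in Step 3-2-2 (passages (a)--(d)) is the right thing to flag; the resolution, implicit in the paper, is that the hyperbolic points created while moving the sub-boxes $A',A''$ are attributed to the \emph{interior} elliptic points of those sub-boxes, so when one sums over all $x_j$ and all b-arc pairs the count is exactly $m-1$ per foot and nothing is double-counted.
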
  

\begin{figure}[htbp]
\begin{center}
\SetLabels
(0.0*0.96) (a)\\
(0.0*0.5) (b)\\
(0*.85) $\F(D)$\\
(0*.4) $\F(D)$\\
(.45*.95) $\F(D_*)$\\
(.43*.46) $\F(D_*)$\\
(0.14*0.9) {\small $x_{j}$}\\
(0.14*0.43) {\small$x_{j}$}\\
(0.16*0.1) {\small$x_{k}$}\\
(0.13*0.62) {\footnotesize $t\!=\!0$}\\
(0.15*0.25) {\footnotesize $t\!=\!0$}\\
(0.81*0.9) $+$\\
(0.51*0.8) {\small$x_{j}^{(m)}$}\\
(0.71*0.8) {\small$x_{j}^{(2)}$}\\
(0.88*0.8) {\small$x_{j}^{(1)}$}\\
(0.8*0.4) $+$\\
(0.8*0.1) $-$\\
(0.51*0.4) {\small$x_{j}^{(m)}$}\\
(0.51*0.2) {\small$x_{k}^{(m)}$}\\
(0.73*0.4) {\small$x_{j}^{(2)}$}\\
(0.73*0.19) {\small$x_{k}^{(2)}$}\\
(0.9*0.3) {\small$x_{j}^{(1)}$}\\
(0.9*0.1) {\small$x_{k}^{(1)}$}\\
\endSetLabels
\strut\AffixLabels{\includegraphics*[width=120mm]{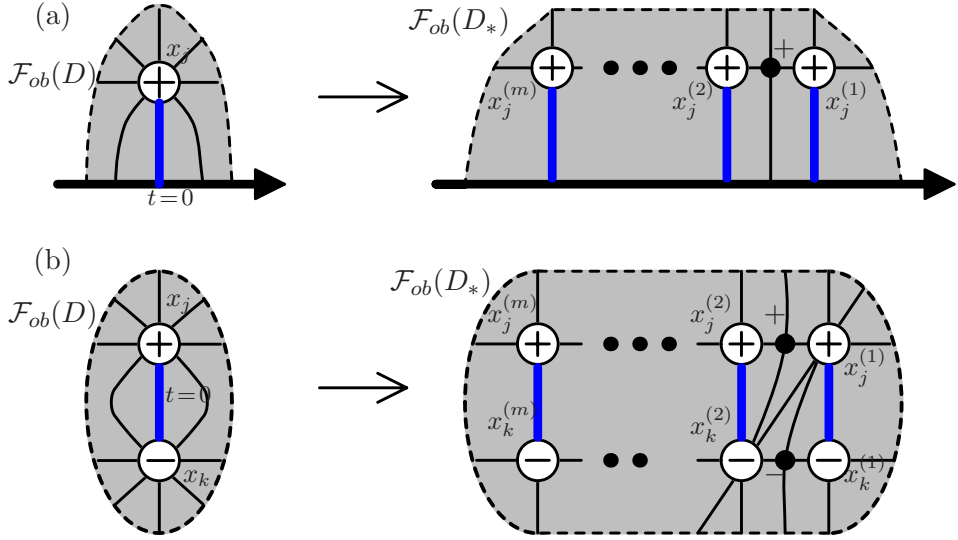}}
\caption{(Proposition~\ref{prop:foliation}-(2), (3)) 
Open book foliations $\F(D)$ and $\F(D_*)$ near $x_j$. 
Here (b) illustrates the case $\sgn(x_{j})=-\sgn(x_{k})=+1$.} 
\label{fig:blowup}
\end{center}
\end{figure}

\begin{proof}[Proof of Proposition~\ref{prop:foliation}-(1)] 
In the construction of $D_*$ we keep the same elliptic points $v, \Omega_0,\dots, \Omega_m$, the b-arcs $b_t$ and the hyperbolic points $h_-, h_1, \dots, h_m$. 
Therefore $\F(D_*)$ and $\F(D)$ are identical near $v$. 
\end{proof}

\begin{proof}[Proof of Proposition~\ref{prop:foliation}-(2)]

Let $j \in \{1,\dots,n\}$. 
Suppose that the leaf from the elliptic point $x_j$ of $D$ contained in the page $S_0$ (hence contained in the leaf box $A^\Box$) is an a-arc. 
This imposes $\sgn(x_j)=+1$, hence $\sgn(x^{(i)}_j)=\sgn(x_j) = +1$ for all $i=1,\dots,m$. 
In the following we compare the changes in foliations and {\bf Step 1}, \dots, {\bf Step 4} of the construction of $D_*$ in Section~\ref{sec:niceposition}.

\noindent{\bf The open book foliation for {\bf Step 1} } 

\noindent Let $t\in [0, t_1+\e]$. 
Recall the operations in Figure~\ref{fig:step1} and Figure~\ref{fig:step1-2}.
For each page $S_t$ we remove the positive elliptic point $x_j$ and the a-arc emanating from $x_j$  then add positive elliptic points $x^{(1)}_j,\dots,x^{(m)}_j$ and a-arcs emanating from $x^{(1)}_j,\dots,x^{(m)}_j$. 
The union of these a-arcs for $t \in [0, t_1+\e]$ is $m$ disjoint union of fan-shaped regions with the pivotal points $x^{(1)}_j,\dots,x^{(m)}_j$ as depicted in the top row of Figure~\ref{fig:step3fola}. 
\begin{figure}[htbp]
\begin{center}
\SetLabels
(0.02*1.01) {\bf [Step 1]}\\
(0.12*0.96) {$x_j^{(i)}$}\\
(0.03*0.85) {\scriptsize $t=0$}\\
(0.2*0.92) {\scriptsize $t=t_1+\varepsilon$}\\
(0.02*0.52) {\scriptsize $t=0$}\\
(0.19*0.41) {\scriptsize $t=t_i\!+\!\varepsilon$}\\
(0.2*0.58) {\scriptsize $t=t_{i+1}\!-\!\varepsilon$}\\
(0.04*0.58) {$x_j^{(k)}$}\\
(0.06*0.64) {\scriptsize $(k\neq i)$}\\
(0.02*0.7) {\bf [Step 2]}\\
(0.31*0.48) {\scriptsize $t=0$}\\
(0.48*0.39) {\scriptsize $t=t_i\!+\!\varepsilon$}\\
(0.35*0.7) {\scriptsize $t=t_{i+1}\!-\!\varepsilon$}\\
(0.33*0.55) {$x_j^{(i)}$}\\
(0.82*0.39) {\scriptsize $t=t_i\!+\!\varepsilon$}\\
(0.76*0.7) {\scriptsize $t=t_{i+1}\!-\!\varepsilon$}\\
(0.68*0.51) {$x_j$}\\
(0.9*0.67) {$\mathcal{F}_{ob}(D)$}\\
(0.04*0.22) {$x_j^{(k)}$}\\
(0.06*0.28) {\scriptsize $(k \neq i,i-1)$}\\
(0.02*0.35) {\bf [Step 3]}\\
(0.01*0.16) {\scriptsize $t=0$}\\
(0.17*0.07) {\scriptsize $t=t_i\!-\!\varepsilon$}\\
(0.19*0.23) {\scriptsize $t=t_{i}\!+\!\varepsilon$}\\
(0.31*0.15) {\scriptsize $t=0$}\\
(0.53*0.15) {\scriptsize $t=0$}\\
(0.32*0.23) {$x_j^{(i)}$}\\
(0.54*0.25) {$x_j^{(i-1)}$}\\
(0.49*0.33) {\scriptsize $t=t_i\!-\!\varepsilon$}\\
(0.33*0.28) {\scriptsize $t=t_{i}\!+\!\varepsilon$}\\
(0.81*.23) {\small This space will be filled by}\\
(0.81*.2) {\small a-arcs by Steps 2, 3 and 4.}\\
(.55*.03) {\scriptsize $t=t_i\!+\!\varepsilon$}\\
(0.25*0.03) {\scriptsize $t=t_i\!-\!\varepsilon$}\\
\endSetLabels
\strut\AffixLabels{\includegraphics*[width=130mm]{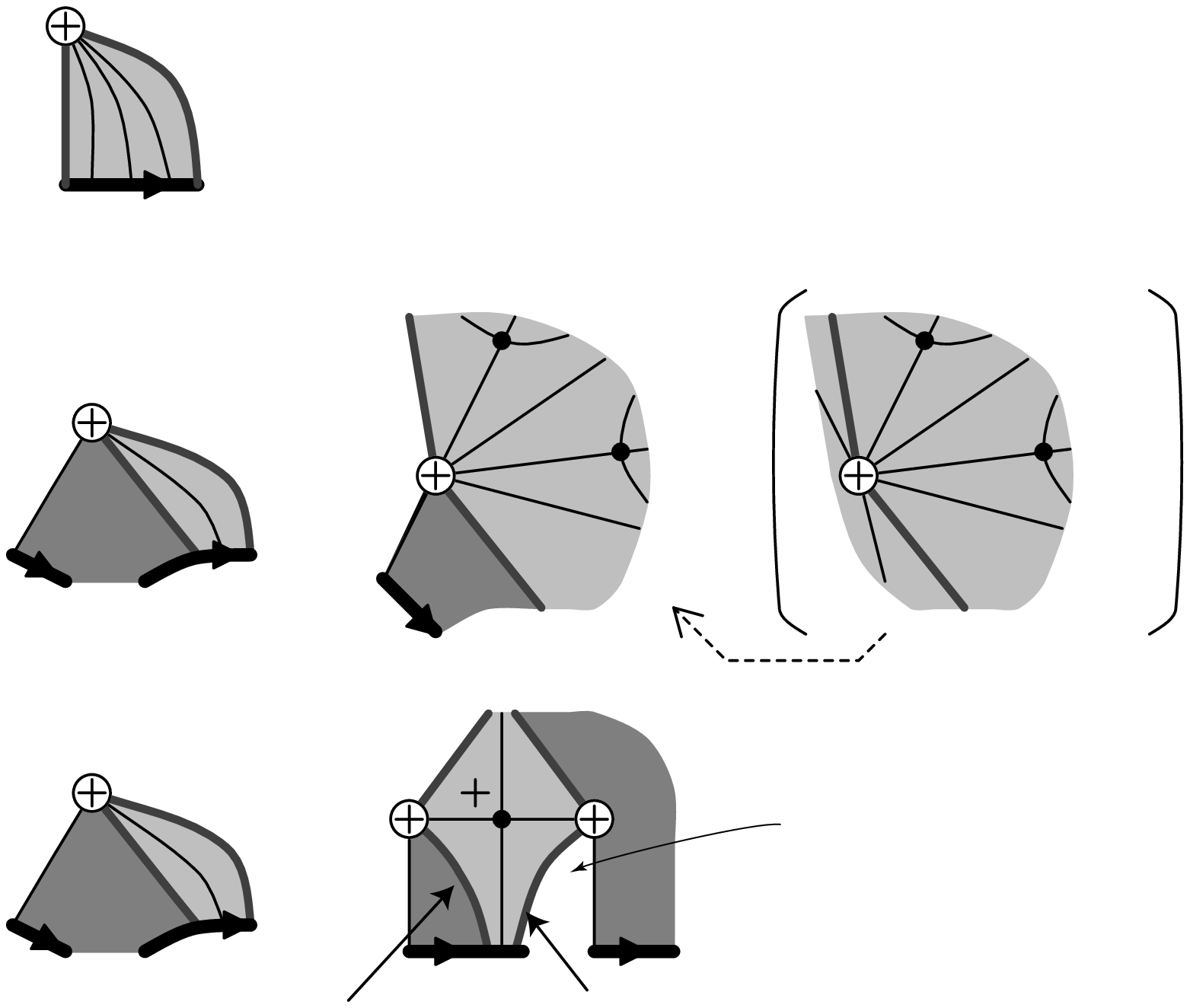}}
\caption{(Proposition~\ref{prop:foliation}-(2)) Open book foliation of $D_*$ near $x_j^{(i)}$, in the case the leaf from $x_j$ in the page $S_0$ is an a-arc.} 
\label{fig:step3fola}
  \end{center}
\end{figure}

\noindent{\bf The open book foliation for {\bf Step 2} } 

\noindent Let $i \in \{1, \dots, m-1\}$ and $t \in [t_i+\e, t_{i+1}-\e)$. 
A similar argument works for $i=m$.

Suppose that $k \neq i$. 
In the page $S_t$ we give $D_*$ an a-arc emanating from $x^{(k)}_j$ as depicted in Figure~\ref{fig:step2-2}. 
The union of the a-arcs for $t \in [t_i+\e, t_{i+1}-\e)$ is a fan-shaped region with the pivotal point $x^{(k)}_j$ (see the light gray region in Figure~\ref{fig:step3fola}).

Suppose that $k = i$. 
The construction depicted in Figures~\ref{fig:slideleg} and \ref{fig:step2} shows that the union of leaves emanating from the elliptic point $x^{(i)}_j$ for the interval $[t_i+\e, t_{i+1}-\e)$ (the light gray region in the middle sketch of Figure~\ref{fig:step3fola}) and the union of leaves emanating from $x_j$ (the right sketch of Figure~\ref{fig:step3fola}) are exactly the same.

\noindent{\bf The open book foliation for  {\bf Step 3} } 

\noindent 
Fix $i \in \{2, \dots, m\}$ and let $t \in [t_i-\e, t_i+\e)$. 

Suppose that $k \neq i-1$, $i$.  
By the operation depicted in Figure~\ref{fig:step2-2} the leaf emanating from $x^{(k)}_j$ in the page $S_t$ is an a-arc.
The union of these a-arcs for the interval $[t_i-\e, t_i+\e)$ forms a fan-shaped region with the pivotal point $x^{(k)}_j$ (the light gray region in the left sketch of  Figure~\ref{fig:step3fola}).

Next we study the open book foliation near $x^{(i-1)}_j$ and $x^{(i)}_j$. 
Let $t\in[t_{i}-\varepsilon, t_{i}+\frac{1}{2}\varepsilon]$.
Due to {\bf Step 3-1} (Figure~\ref{fig:step3-1}) the leaves in $S_0$ from $x^{(i-1)}_j$ and $x^{(i)}_j$ are regular.  
In the interval $[t_{i}+\frac{\e}{2},t_{i}+\e]$, 
{\bf Step 3-2-1} (Figure~\ref{fig:step3-2b}) introduces a positive hyperbolic point in order to switch the foot of the leg of $X$ from $x_j^{(i-1)}$ to $x_j^{(i)}$. 
The light gray region in the bottom right sketch of Figure~\ref{fig:step3fola} is filled.

\noindent{\bf The open book foliation for {\bf Step 4} } 

\noindent
{\bf Step 4} does nothing special to the a-arcs emanating from $x^{(i)}_j$ for every $i=1,\dots, m$. 
Therefore the family of a-arcs for the interval $[1-2\e, 1]$ form a fan-shaped region with the pivotal point $x^{(i)}_j$. 
The a-arc in the page $S_1$ and the a-arc in $S_0$ are identified under the monodromy $\phi$. 
\end{proof}

\begin{proof}[Proof of Proposition~\ref{prop:foliation}-(3)]

Assume that the elliptic points $x_j$ and $x_k$ ($1\leq j < k \leq n$) of $\F(D)$ are joined by a b-arc in the page $S_0$ (hence the b-arc is contained in the leaf box $A^\Box$). 

\noindent{\bf The open book foliation for {\bf Step 1} } 

\noindent
Let $t\in [0, t_1+\e]$. 
In every page $S_t$, {\bf Step 1} adds a copy of $A$ on the right-hand side of $\Omega_i$ for each $i=1,\dots, m$, thus the elliptic points $x^{(i)}_j$ and $x^{(i)}_k$ are joined by a b-arc. 
This yields $m$ disjoint bigons foliated by b-arcs as shown in the top row of Figure \ref{fig:step3b}. 
\begin{figure}[htbp]
 \begin{center}
 \SetLabels
(0.01*1.01) {\bf [Step 1]}\\
(0.14*0.97) {$x_j^{(i)}$}\\
(0.14*0.8) {$x_k^{(i)}$}\\
(0.01*0.94) {\scriptsize $t=0$}\\
(0.2*0.92) {\scriptsize $t=t_1+\varepsilon$}\\
(0.66*0.97) {$x_k^{(i)}$}\\
(0.66*0.8) {$x_j^{(i)}$}\\
(0.54*0.94) {\scriptsize $t=0$}\\
(0.72*0.92) {\scriptsize $t=t_1+\varepsilon$}\\
(0.01*0.75) {\bf [Step 2]}\\
(0.0*0.64) {\scriptsize $t=0$}\\
(0.09*0.6) {\scriptsize $t=t_i\!+\!\varepsilon$}\\
(0.16*0.73) {\scriptsize $t=t_{i+1}\!-\!\varepsilon$}\\
(0.16*0.44) {\scriptsize $t=t_{i+1}\!-\!\varepsilon$}\\
(0.05*0.68) {$x^{(i)}_j$}\\
(0.05*0.47) {$x^{(i)}_k$}\\
(0.32*0.6) {\scriptsize $t=t_i\!+\!\varepsilon$}\\
(0.355*0.72) {\scriptsize $t=t_{i+1}\!-\!\varepsilon$}\\
(0.355*0.44) {\scriptsize $t=t_{i+1}\!-\!\varepsilon$}\\
(0.27*0.68) {$x_j$}\\
(0.27*0.5) {$x_k$}\\
(0.43*0.77) {$\mathcal{F}_{ob}(D)$}\\
(0.01*0.4) {\bf [Step 3]}\\
(0.00*0.25) {\scriptsize $t=0$}\\
(0.07*0.2) {\scriptsize $t=t_i\!-\!\varepsilon$}\\
(0.07*0.36) {\scriptsize $t=t_{i}\!+\!\varepsilon$}\\
(0.28*0.14) {\scriptsize $t=0$}\\
(0.06*0.3) {$x^{(i)}_j$}\\
(0.27*0.29) {$x^{(i-1)}_j$}\\
(0.05*0.07) {$x^{(i)}_k$}\\
(0.27*0.08) {$x^{(i-1)}_k$}\\
(0.24*0.36) {\scriptsize $t=t_i\!-\!\varepsilon$}\\
(0.25*0.18) {\scriptsize $t=t_{i}\!+\!\varepsilon$}\\
(0.07*0.02) {\scriptsize $t=t_{i}\!+\!\varepsilon$}\\
(0.24*0.02) {\scriptsize $t=t_i\!-\!\varepsilon$}\\
(0.42*0.34) {\scriptsize This space}\\
(0.42*0.31) {\scriptsize will be filled}\\
(0.42*0.28) {\scriptsize by b-arcs}\\
(0.42*0.25) {\scriptsize by Steps}\\
(0.42*0.22) {\scriptsize 2, 3 and 4.}\\
(0.52*0.64) {\scriptsize $t=0$}\\
(0.61*0.6) {\scriptsize $t=t_i\!+\!\varepsilon$}\\
(0.68*0.73) {\scriptsize $t=t_{i+1}\!-\!\varepsilon$}\\
(0.68*0.44) {\scriptsize $t=t_{i+1}\!-\!\varepsilon$}\\
(0.57*0.68) {$x^{(i)}_k$}\\
(0.57*0.47) {$x^{(i)}_j$}\\
(0.84*0.6) {\scriptsize $t=t_i\!+\!\varepsilon$}\\
(0.875*0.72) {\scriptsize $t=t_{i+1}\!-\!\varepsilon$}\\
(0.875*0.44) {\scriptsize $t=t_{i+1}\!-\!\varepsilon$}\\
(0.79*0.68) {$x_k$}\\
(0.79*0.5) {$x_j$}\\
(0.95*0.77) {$\mathcal{F}_{ob}(D)$}\\
(0.54*0.26) {\scriptsize $t=0$}\\
(0.59*0.2) {\scriptsize $t=t_i\!-\!\varepsilon$}\\
(0.07*0.36) {\scriptsize $t=t_{i}\!+\!\varepsilon$}\\
(0.8*0.14) {\scriptsize $t=0$}\\
(0.58*0.3) {$x^{(i)}_k$}\\
(0.79*0.29) {$x^{(i-1)}_k$}\\
(0.57*0.07) {$x^{(i)}_j$}\\
(0.79*0.08) {$x^{(i-1)}_j$}\\
(0.76*0.36) {\scriptsize $t=t_i\!-\!\varepsilon$}\\
(0.77*0.18) {\scriptsize $t=t_{i}\!+\!\varepsilon$}\\
(0.59*0.02) {\scriptsize $t=t_{i}\!+\!\varepsilon$}\\
(0.76*0.02) {\scriptsize $t=t_i\!-\!\varepsilon$}\\
(1*0.34) {\scriptsize This space will be}\\
(1*0.31) {\scriptsize filled by b-arcs by}\\
(1*0.28) {\scriptsize Steps 2, 3 and 4.}\\
\endSetLabels
\strut\AffixLabels{\includegraphics*[width=120mm]{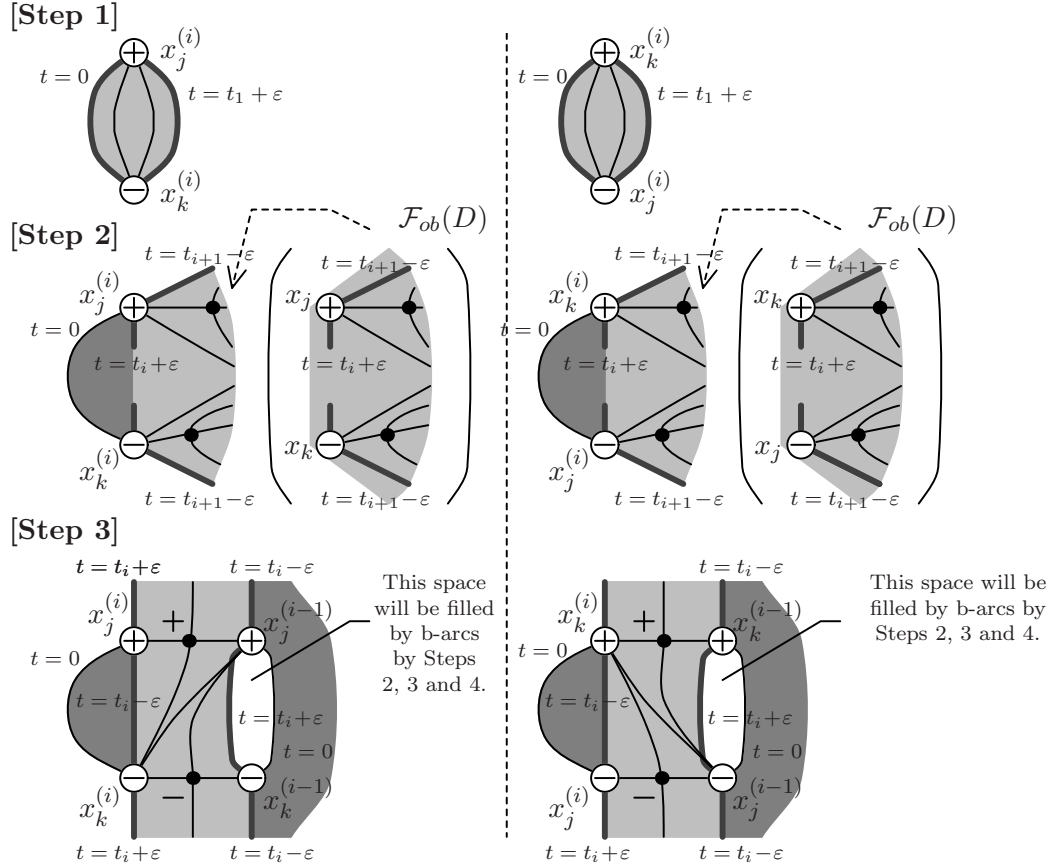}}
\caption{(Proposition~\ref{prop:foliation}-(3)) 
The open book foliation of $D_*$ near $x^{(i)}_{j}$ and $x^{(i)}_{k}$, in the case that $x_j$ and $x_k$ $(j<k)$ are joined by a b-arc in the page $S_0$. The left (resp. right) column illustrates the case $\sgn(x_j)=-\sgn(x_k)=+1$ (resp. $-1$). }
\label{fig:step3b}
  \end{center}
\end{figure}

\noindent{\bf The open book foliation for {\bf Step 2} } 

\noindent
Let $i \in \{1, \dots, m-1\}$ and $t \in [t_i+\e, t_{i+1}-\e)$. 
A similar argument works for $i=m$.   

Suppose that $l\neq i$. 
In the page $S_t$ we give $D_*$ a b-arc joining $x^{(l)}_j$ and $x^{(l)}_j$ as depicted in Figure~\ref{fig:step2-2}. 
The family of the b-arcs for $t \in [t_i+\e, t_{i+1}-\e)$ yields a bigon.

Suppose that $l=  i$. 
The construction depicted in Figures~\ref{fig:slideleg} and \ref{fig:step2} shows that the 
open book foliation of $D$ near $x_j$ and $x_k$ and the open book foliation of $D_*$ near $x^{(l)}_j$ and $x^{(l)}_k$ are identical. 
See the light gray region in the middle row of Figure~\ref{fig:step3b}.

\noindent{\bf The open book foliation for  {\bf Step 3} } 

\noindent 
Let $i \in \{2, \dots, m\}$, and let $l \neq i-1$, $i$ and $t \in [t_i-\e, t_i+\e)$. 
By the operation depicted in Figure~\ref{fig:step2-2} in the page $S_t$ the elliptic points $x^{(l)}_j$ and $x^{(l)}_k$ are joined by a b-arc. 
The family of the b-arcs for the interval $[t_i-\e, t_i+\e)$ yields a bigon.

Let $l = i-1$, $i$.  
In the page $S_t$ for $t\in[t_{i}-\varepsilon, t_{i}+\frac{1}{2}\varepsilon]$ due to {\bf Step 3-1} (Figure~\ref{fig:step3-1}) $x^{(l)}_j$ and $x^{(l)}_k$ are joined by a b-arc. 
The family of the b-arcs for the interval $[t_{i}-\varepsilon, t_{i}+\frac{1}{2}\varepsilon]$ yields a bigon. 
In the interval $[t_{i}+\frac{\e}{2},t_{i}+\e]$, 
{\bf Step 3-2-2} (Figure~\ref{fig:step3-2c}) introduces two hyperbolic points $h$ and $\overline h$ in order to switch the feet of the legs of $X$ from $x_j^{(i-1)}$ to $x_j^{(i)}$ and from $x_k^{(i-1)}$ to $x_k^{(i)}$. 
If $\sgn(x_{j})=-\sgn(x_{k})=+1$, by the sign constraint (\ref{sign of h}) we have $\sgn(h)=-\sgn(\overline h)=+1$. 
Putting {\bf Step 3-1} and {\bf Step 3-2-2} together we obtain a region consisting of two tiles of type ab- or bb-, wee the light gray regions shown in the bottom left sketch of Figure~\ref{fig:step3b}. 
If $\sgn(x_{j})=-\sgn(x_{k})=-1$ a parallel argument holds. 

\noindent{\bf The open book foliation for {\bf Step 4} } 

\noindent
{\bf Step 4} does nothing special to the b-arcs joining $x^{(i)}_j$ and $x^{(i)}_k$ for every $i=1,\dots, m$. 
Therefore the family of the b-arcs for the interval $[1-2\e, 1]$ form a bigon. 
The b-arc in the page $S_1$ and the b-arc in $S_0$ are identified under the monodromy $\phi$. 
\end{proof}

\begin{proposition}
\label{prop:consequence}
The disc $D_*$ has the following properties. 
\begin{enumerate}
\item[(i)] The b-arc $b_0 \subset S_0$ of $\F(D_*)$ connecting $v$ and $\Omega_{0}$ is not essential. 

\item[(ii)] $G_{--}(D_*)$ is a tree and the number of the valence $1$ vertices of $G_{--}(D_*)$ and that of $G_{--}(D)$ are the same.
\item[(iii)] The self-linking number $sl(\partial D_*, D_*)= 1$, 
i.e., $\partial D_*$ does not satisfy the Bennequin-Eliashberg inequality \cite{el2}. 
\end{enumerate}
\end{proposition}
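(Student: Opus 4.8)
The plan is to read all three statements off the explicit construction of $D_*$ in Section~\ref{sec:niceposition} together with Proposition~\ref{prop:foliation}. Part (i) is immediate: $b_0\subset S_0$ still joins $v$ and $\Omega_0$, so it is a b-arc of $\F(D_*)$, and by \textbf{Step 4} the sub-disc of $S_0$ bounded by $b_0$, the describing arc of the negative hyperbolic point $h_-$, and $\partial'\Delta$ contains no leaf of $D_*$; this says exactly that $b_0$ is boundary-parallel in $S_0\setminus(S_0\cap D_*)$, i.e. non-essential.

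For part (iii) I would combine the open book foliation formula $sl(\partial F, F) = -(e_+(F)-e_-(F)) + (h_+(F)-h_-(F))$ from \cite{ik1-1}, where $e_\pm(F)$ and $h_\pm(F)$ count the elliptic and hyperbolic points of $\F(F)$ of each sign, with the fact that $sl(\partial D,D)=1$ for every transverse overtwisted disc $D$ (also \cite{ik1-1}, a consequence of $G_{--}(D)$ being a tree with no fake vertices and $G_{++}(D)\cong S^1$). Proposition~\ref{prop:foliation} shows that passing from $\F(D)$ to $\F(D_*)$ keeps the quantity $-(e_+-e_-)+(h_+-h_-)$ unchanged: modification (1) makes no change; modification (2) removes one positive elliptic point and inserts $m$ positive elliptic and $m-1$ positive hyperbolic points, contributing $-(m-1)+(m-1)=0$; modification (3) removes one positive and one negative elliptic point and inserts $m$ positive, $m$ negative elliptic points and $m-1$ positive, $m-1$ negative hyperbolic points, again contributing $0$. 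Hence $sl(\partial D_*,D_*)=1$, and since $D_*$ is a disc with nonempty boundary (Proposition~\ref{prop:foliation}) we get $1 = sl(\partial D_*,D_*) > -1 = -\chi(D_*)$, so $\partial D_*$ violates the Bennequin--Eliashberg inequality.

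Part (ii) is where the real work is. Modification (1) of Proposition~\ref{prop:foliation} makes no change at all, and modification (2) creates and destroys only positive elliptic and hyperbolic points, so neither affects $G_{--}$. Hence $G_{--}(D_*)$ differs from $G_{--}(D)$ only through modification (3), over the b-arcs $x_jx_k\subset A$ with $\sgn(x_k)=-1$. Over such a b-arc, Proposition~\ref{prop:foliation}-(3) and Figures~\ref{fig:blowup}-(b), \ref{fig:step3b} show the effect on $G_{--}(D)$: the subtree carried by the replaced neighborhood of $x_jx_k$ — which meets the rest of $G_{--}(D)$ in a single vertex, because of the normal form ``no hyperbolic points in $\Delta\times[0,t_1)$'' — is replaced by $m$ parallel copies chained together by the $m-1$ negative hyperbolic points $\overline h$, the rest of $G_{--}(D)$ staying grafted onto the first copy. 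I would then verify that this ``linear expansion'' of a subtree into $m$ chained copies sends a tree to a tree and conserves the number of valence-$1$ vertices: each of the $m-1$ chaining edges raises the valence of one copy of $x_k$ by one, and the last copy recovers exactly one valence-$1$ vertex in place of the one that $x_k$ contributed to $G_{--}(D)$. Performing this over every b-arc of $A$ (which sit in pairwise disjoint parts of $G_{--}$) yields (ii).

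The main obstacle is precisely this last combinatorial check in part (ii): one must extract from the local pictures in Figures~\ref{fig:step3b} and \ref{fig:blowup}-(b) exactly how the $m$ copies of the subtree are chained by the $\overline h$'s and how the edges of $G_{--}(D)$ already incident to $x_k$ are distributed among the copies, and then confirm that the valence-$1$ count is exactly preserved. Parts (i) and (iii) follow formally once the construction and Proposition~\ref{prop:foliation} are in place.
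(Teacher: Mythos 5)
Your treatment of parts (i) and (iii) is correct and matches the paper's own argument. For (i) you read off from \textbf{Step 4} that the subdisc of $S_0$ cut off by $b_0$ contains no leaves of $D_*$, which is exactly the paper's point. For (iii) you are in fact more explicit than the paper: you apply the formula $sl = -(e_+-e_-)+(h_+-h_-)$ from \cite{ik1-1} and verify that each of the three modifications in Proposition~\ref{prop:foliation} contributes $0$ to the right-hand side, so $sl(\partial D_*,D_*)=sl(\partial D,D)=1 > -\chi(D_*)$.

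Part (ii) is where your argument goes off-track. You assert that the edges of $G_{--}(D)$ incident to $x_k$ ``stay grafted onto the first copy'' $x_k^{(1)}$, and base your count of valence-$1$ vertices on that. This is not what the construction does. Re-reading \textbf{Steps 2--4}: for $t\in[t_i+\e,t_{i+1}-\e)$ the legs of $X_t$ are slid to lie beside $\Omega_i$, and when $X_{t^*}$ contains a hyperbolic point $h^*$ the same describing arc is reused there. Hence a negative hyperbolic point of $\F(D)$ joined to $x_k$ at time $t^*$ gets joined in $\F(D_*)$ to $x_k^{(i)}$ for whichever $i$ satisfies $t^*\in(t_i,t_{i+1})$; the old edges are distributed among $x_k^{(1)},\dots,x_k^{(m)}$ according to time, not concentrated at $x_k^{(1)}$. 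This matters for the count you want to carry out: in a chain $x_k^{(1)}-\overline h_2-x_k^{(2)}-\cdots-\overline h_m-x_k^{(m)}$ with $d_i$ external edges at $x_k^{(i)}$, the valence-$1$ vertices among the copies are exactly those ends with $d_1=0$ or $d_m=0$; your scheme (everything on $x_k^{(1)}$, so only $x_k^{(m)}$ is an end of valence $1$) would actually produce a spurious extra valence-$1$ vertex whenever $x_k$ itself had valence $\geq 2$ in $G_{--}(D)$. Your sentence ``the last copy recovers exactly one valence-$1$ vertex in place of the one that $x_k$ contributed'' silently assumes $x_k$ had valence $1$, which is not given. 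So the combinatorial verification you flag as ``the main obstacle'' is not merely unfinished --- the plan you propose for it rests on a mis-identification of where the old edges reattach. The paper itself is terse here (it simply points to Figure~\ref{fig:blowup}-(b) and says $x_k$ is ``replaced by the linear graph'' of the $x_k^{(i)}$ and the $m-1$ negative $\overline h$'s), so you should be analyzing that figure together with the time-distribution of the old edges, rather than inventing the ``graft onto the first copy'' hypothesis.
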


\begin{proof}
(i): By {\bf Step 4} of the construction of $D_*$, the b-arc $b_{0}$ is not essential. 

(ii): 
Suppose that $x_k \in \partial'\Delta$ is a negative elliptic point, that is $x_k$ is a vertex of the graph $G_{--}(D)$. 
As shown in Figure~\ref{fig:blowup}-(b) the graph $G_{--}(D_*)$ is obtained by replacing the vertex $x_k \in G_{--}(D)$ with the linear graph that connects $x_{j}^{(1)},\dots, x_{j}^{(m)}$ and contains $m-1$ negative hyperbolic points. 

(iii):
The self-linking number of a braid $K$ with respect an open book $(S, \phi)$ bounding a Seifert surface $F$ can be computed by using the combinatorial formula proven in \cite[Proposition 3.2]{ik1-1}
$$sl(K, F) = -(e_+ - e_-) + (h_+ - h_-)$$ 
where $e_{\pm}$ is the number of $\pm$ elliptic points of the open book foliation $\F(F)$ and $h_{\pm}$ is the number of $\pm$ hyperbolic points of $\F(F)$. 
By Definition~\ref{def:trans-ot-disc} we have $sl(\partial D, D)=1$. 
By (ii) we conclude $sl(\partial D_*, D_*)=sl(\partial D, D)=1$. 
\end{proof}

\begin{remark-number}\label{rmk non-planar}
If $S$ is a non-planar surface, it may be possible that $\Omega_{i} \in \partial' \Delta$ for some $i$ and exist a non-separating b-arc connecting $v$ and $\Omega_{i}$, i.e., the property {\bf (P2)} may not hold. 
Then the elliptic points $x_{1}^{(i)},\ldots,x_{n}^{(i)}$ of $D_*$ are placed on $\partial' \Delta$ and  Proposition~\ref{prop:consequence}-(i) may not hold.
\end{remark-number}

\section{The new transverse overtwisted disc $D'$ and complexity}
\label{sec:complexity}

In this section we construct a transverse overtwisted disc $D'$. 
Recall the intermediate disc $D_*$ constructed in Section  \ref{sec:niceposition} and studied in Section~ \ref{sec:foliation}. 
By Proposition \ref{prop:consequence}-(i), the b-arc $b_{0} \subset S_0$ is not essential and co-bounds a disc $\Delta$ with $\partial'\Delta$. 
We remove $\Delta$ as shown in Figure~\ref{fig:removeb} and flatten the local extrema. 
As a result the negative elliptic point $v \in \F(D_*)$ is removed. 
Call the resulting disc $D_{**}$. 
The open book foliation changes as in the passage (1)$\to$(2)$\to$(3) of Figure~\ref{fig:near_v}. 
\begin{figure}[htbp]
\begin{center}
\SetLabels
(0.25*0.55) $\Delta$\\ 
(.25*.68) $b_0$\\
(.08*.22) $v$\\
(.05*.72) $\Omega_0$\\
(.25*.15) $D_*$\\
\endSetLabels
\strut\AffixLabels{\includegraphics*[scale=0.5, width=80mm]{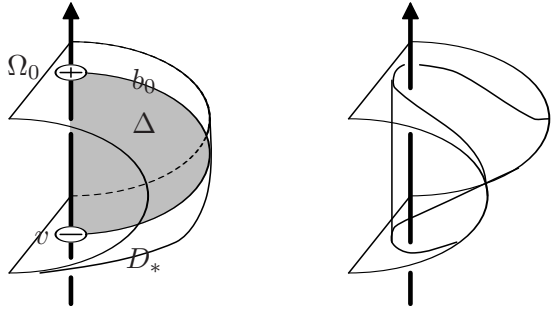}}
\caption{Removing the non-essential b-arc $b_0$.}
\label{fig:removeb}
\end{center}
\end{figure}
\begin{figure}[htbp]
\begin{center}
\SetLabels
(.18*1.02) (1) $\F(D_*)$\\
(0.23*0.2) $\Omega_0$\\
(0.1*0.18) $\Omega_1$\\
(0.06*0.7) $\Omega_{m\!-\!1}$\\
(0.23*0.73) $\Omega_m$\\
(.12*.5) $v$\\
(0.47*1.02) (2)\\
(0.43*0.2) $\Omega_1$\\
(0.41*0.7) $\Omega_{m\!-\!1}$\\
(0.58*0.76) $\Omega_m$\\
(0.85*1.02) (3) $\F(D_{**})$\\
(0.78*0.2) $\Omega_1$\\
(0.76*0.7) $\Omega_{m\!-\!1}$\\
(0.92*0.76) $\Omega_m$\\
\endSetLabels
\strut\AffixLabels{\includegraphics*[width=140mm]{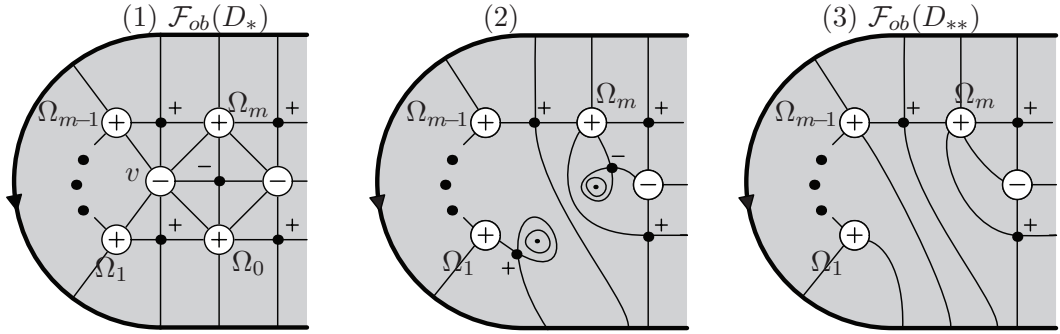}}
\caption{Construction of $D_{**}$. }
\label{fig:near_v}
\end{center}
\end{figure}

This does not affect the self-linking number and we have $sl(\partial D_{**}, D_{**})=sl(\partial D_*,D_*) = 1$ by Proposition~\ref{prop:consequence}-(iii). 
The Bennequin-Eliashberg inequality \cite{el2} does not hold for $D_{**}$. 
Thus, we can apply the construction discussed in \cite[Section 4]{ik1-1} to $D_{**}$ and obtain a transverse overtwisted disc $D'$. 
We call the whole construction $$D\to D_*\to D_{**} \to D'$$ {\em deforming $D$ at $v$}.

\begin{proposition}
\label{prop:ends}
The number of the valence $1$ vertices of the graph $G_{--}(D')$ is less than or equal to that of $G_{--}(D)$. 
\end{proposition}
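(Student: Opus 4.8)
The statement will be proved by tracking the graph $G_{--}$ through the three arrows $D\to D_*\to D_{**}\to D'$ of the deformation at $v$, checking that the number of valence $1$ vertices does not increase along any one of them.

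The first arrow is already controlled: Proposition~\ref{prop:consequence}(ii) says $G_{--}(D_*)$ is a tree with exactly as many valence $1$ vertices as $G_{--}(D)$. For the second arrow, Proposition~\ref{prop:foliation}(1) gives that $\F(D_*)$ and $\F(D)$ agree near $v$, so $v$ is still a valence $1$ vertex of the tree $G_{--}(D_*)$, attached through its unique incident negative hyperbolic point $h_-$ to a genuine (i.e.\ non-fake) negative elliptic vertex $w\neq v$, the latter because $G_{--}(D)$, hence $G_{--}(D_*)$, has no fake vertices. By Proposition~\ref{prop:consequence}(i) the $b$-arc $b_0$ is inessential and $\Delta$ contains no leaf of $\F(D_*)$; therefore passing from $D_*$ to $D_{**}$ (Figures~\ref{fig:removeb} and \ref{fig:near_v}) is an isotopy supported near $v$ that erases the negative elliptic point $v$ and leaves $\F(D_*)$ unchanged away from $v,\Omega_0,h_-$. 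On the level of graphs this prunes the leaf $v$ from the tree $G_{--}(D_*)$, together with its edge and with a possible further local deletion or edge-contraction near $w$. Since none of these operations on a tree increases the number of leaves,
\[
\#\{\text{valence }1\text{ vertices of }G_{--}(D_{**})\}\ \leq\ \#\{\text{valence }1\text{ vertices of }G_{--}(D)\}.
\]

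The third arrow is the delicate one. Here $D'$ is produced from $D_{**}$ by the construction of \cite[Section 4]{ik1-1}, which upgrades a disc violating the Bennequin--Eliashberg inequality to a transverse overtwisted disc. By the previous paragraph $G_{--}(D_{**})$ is already a tree with no fake vertices, so the work done by that construction concerns only the $c$-circles and the graph $G_{++}$ of $D_{**}$; moreover, since $\F(D)$ was chosen essential and the one inessential $b$-arc $b_0$ has already been removed, the configurations that must be eliminated in $D_{**}$ are $c$-circles and boundary-parallel $aa$-tiles. One then checks move by move — $c$-circle eliminations through $ac$-, $bc$- or $cc$-regions, and destabilizations of $aa$-tiles — that each such move either leaves $G_{--}$ untouched or modifies it by pruning a subtree, deleting a leaf, contracting an edge, or deleting an isolated vertex, and that no move ever attaches a new leaf to $G_{--}$. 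Together with the two preceding arrows this yields $\#\{\text{valence }1\text{ vertices of }G_{--}(D')\}\le\#\{\text{valence }1\text{ vertices of }G_{--}(D)\}$, which is the proposition.

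The step I expect to be the main obstacle is this last one: spelling out precisely which simplification moves of \cite[Section 4]{ik1-1} can actually occur when the construction is run on the specific disc $D_{**}$, and verifying for each of them that it does not create a new valence $1$ vertex of $G_{--}$ (in particular that no middle edge of the $G_{--}$-tree is ever severed). By contrast, the first two arrows reduce to elementary bookkeeping about leaves of trees once Proposition~\ref{prop:consequence} is in hand.
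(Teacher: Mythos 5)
Your tracking of $G_{--}$ through the chain $D\to D_*\to D_{**}\to D'$ is exactly the paper's strategy, and your treatment of the first two arrows matches the paper: Proposition~\ref{prop:consequence}-(ii) gives equality of the counts for $D$ and $D_*$ (the paper's own proof cites part (iii), which is evidently a typo for (ii)), and passing from $D_*$ to $D_{**}$ simply prunes the valence-one vertex $v$ together with its single incident edge, which cannot increase the number of leaves of a tree. So far so good.

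The gap is exactly where you flagged it: the third arrow. Your plan is to inspect, move by move, the simplification procedure of \cite[Section~4]{ik1-1} and argue that no move attaches a new leaf to $G_{--}$. You don't carry this out, and more importantly, that is not what the paper does. The paper's proof begins with the single sentence ``Note that $G_{--}(D')=G_{--}(D_{**})$,'' i.e.\ the construction that turns $D_{**}$ into a genuine transverse overtwisted disc does not merely refrain from adding leaves to $G_{--}$ --- it leaves $G_{--}$ untouched altogether. This is stronger than what you set out to prove and makes the move-by-move bookkeeping unnecessary. Heuristically it is available because, after the first two arrows, $G_{--}(D_{**})$ is already a connected tree with no fake vertices and $\F(D_{**})$ has no c-circles; the only obstruction to $D_{**}$ being a transverse overtwisted disc sits in $G_{++}$, and the process of \cite[Section~4]{ik1-1} (cutting along a cycle of $G_{++}$) acts only on that part of the foliation. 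Without invoking this equality your proof is incomplete, and your expectation that the remaining moves might ``prune a subtree'' or ``contract an edge'' of $G_{--}$ is misplaced --- nothing of the sort happens.
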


\begin{proof}
Note that $G_{--}(D') = G_{--}(D_{**})$.
By the construction of $D_{**}$ the graph $G_{--}(D_{**})$ is obtained from $G_{--}(D_*)$ by removing the vertex $v$ and the edge from $v$. 
The assertion follows from Proposition \ref{prop:consequence}-(iii).
\end{proof}

\begin{remark-number}\label{remark on D_*}
Here are remarks on $D_*$ and $D_{**}$.
\begin{enumerate}
\item 
If $m=1$, that is, if there is only one positive hyperbolic point  connected to $v$, then $D_{**}$ is already a transverse overtwisted disc so $D'=D_{**}$. In this case, the operation $D\to D'$ is nothing but an {\em exchange move} studied in \cite{ik3}.

\item  
The passage $D \to D_* \to D_{**}$ does not require that $D$ be a disc. We only need the assumption that $v$ is a non-strongly essential, valence one vertex of $G_{--}(D)$. Similar construction may apply to general surfaces embedded in $M_{(S,\phi)}$.
\end{enumerate}
\end{remark-number}

At a first glance, $\F(D')$ looks more complicated than $\F(D)$, since we have  introduced many singularities, including {\em negative} elliptic points in order to remove the vertex $v \in G_{--}(D)$. 
Our next task is to define a {\em complexity} of a transverse overtwisted disc and show that the complexity of $D'$ is smaller than that of $D$.

\begin{definition}\label{def:B(v)}
Let $D$ be a transverse overtwisted disc. 
Let $$\mathcal V_D := \textrm{the set of valence one, non-strongly essential vertices of } G_{--}(D).$$
For $v \in \mathcal V_D$, the {\em branch} $B(v)$ is the maximal connected subgraph of $G_{--}(D)$ containing $v$ and valence $\leq 2$, non-strongly essential vertices of $G_{--}(D)$.
See Figure \ref{fig:branch}.  
\end{definition}
\begin{figure}[htbp]
 \begin{center}
 \SetLabels
(0.1*0.8) $B(v)$\\
(0.03*0.55) $v$\\ 
(0.85*0.8) $B(u)$\\
(0.98*0.55) $u$\\ 
(.7*1.02) $B(w)$\\
(.75*.88) $w$\\
(0.75*0.4) {\large $G_{--}$}\\
(0.55*0.19) : non-strongly essential negative elliptic point\\
(0.51*0.04) : strongly essential negative elliptic point\\
\endSetLabels
\strut\AffixLabels{\includegraphics*[width=100mm]{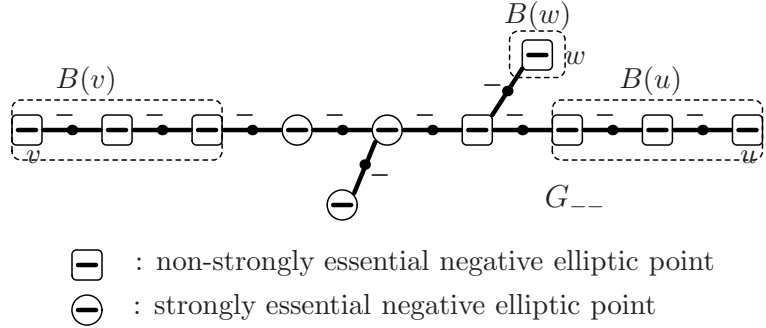}}
\caption{Examples of branches.}
\label{fig:branch}
\end{center}
\end{figure}

\begin{definition}
Let $v \in \mathcal V_D$ and $w \in B(v)$. 
Let $b \subset S_t$ be a non-strongly essential b-arc  ending at $w$. 
That is, $b$ cobounds a disc $\Delta \subset S_{t}$ with a sub-arc $\alpha$, of the binding. We define the {\em nesting level} of $b$ as follows (see also Figure~\ref{fig:nestlevel}): 
\begin{enumerate}
\item If $B(v) \cap \alpha$ is empty then we define the nesting level of $b$ to be zero.
\item If $B(v) \cap \alpha$ is non-empty then let $k$ be the maximal nesting level of the b-arcs in $S_t$ that end at $B(v) \cap \alpha$. The nesting level of $b$ is defined to be $k+1$.
\end{enumerate}
\end{definition}
\begin{figure}[htbp]
 \begin{center}
 \SetLabels
(0.0*0.08) \large $w$\\
(0.4*0.82) \large $\mathsf{2}$\\
(0.18*0.77) \large $\mathsf{0}$\\
(0.165*0.49) \large $\mathsf{0}$\\
(0.21*0.35) \large $\mathsf{0}$\\
(0.38*0.47) \large $\mathsf{1}$\\
(0.75*0.29) \large $\in B(v)$\\
(0.75*0.17) \large $\not \in B(v)$\\
(.5*.5) $b$\\
\endSetLabels
\strut\AffixLabels{\includegraphics*[width=90mm]{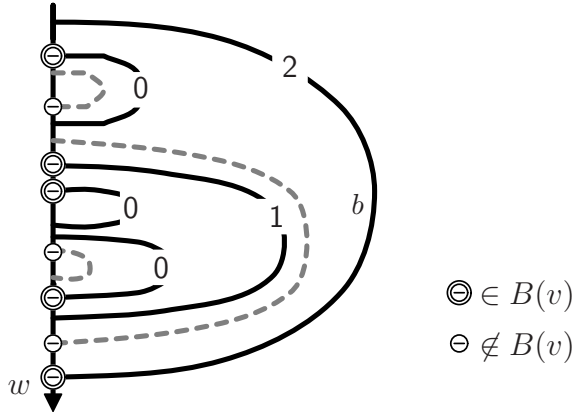}}
\caption{The nesting level of the b-arc $b$ is $2$. 
We take into account only the b-arcs that end at elliptic points in $B(v)$ (bold arcs). B-arcs not ending at $B(v)$  (dashed arcs) do not contribute to the nesting level.} 
\label{fig:nestlevel}
\end{center}
\end{figure}

\begin{definition}\label{NL(w)}
For $v \in \mathcal V_D$ and $w \in B(v)$ we define $\NL_D(w) \in \Z$ the {\em nesting level} of $w$ with respect to $D$ by 
\[ \NL_{D}(w) = \max\{ \textrm{the nesting level of } b \: | \: b  \textrm{ is a non-strongly essential b-arc ending at } w \}. \]
\end{definition}

\begin{definition}
Let $\mC_k$ be the number of the vertices $w$ of $B(v)$ of the nesting level $NL_D(w)=k$.
The {\em complexity} of $v \in \mathcal V_D$ is a sequence of non-negative integers
\[ \mC_{D}(v) = (\ldots,\mC_{k},\mC_{k-1},\ldots,\mC_{1},\mC_0).\]
We compare sequences by the lexicographical order (from the left). 
For example $$(\ldots,0,0,1,1,4) > (\ldots,0,0,9,42).$$ 
\end{definition}

\begin{definition}
Let 
\[ \mC_D :=  \min \{\mC_{D}(v)\: | \: v \in \mathcal V_D\}. \]
If the set $\mathcal V_D$ is non-empty then $\mathcal C_D > (\dots, 0, 0).$
Thus we may define $\mC_D=(\dots,0,0)$ if and only if all the valence one vertices of $G_{--}(D)$ are strongly essential.
\end{definition}

\begin{definition}
Let $\left| \mathcal V_D \right|$ denote the cardinality of the set $\mathcal V_D$. 
We define the {\em complexity} of $D$ to be the pair
\[ \mathfrak{C}(D):= (\left| \mathcal V_D \right|, \mC_D) \]
and compare $\mathfrak{C}(D)$ by the lexicographical order. 
\end{definition}

\begin{remark-number}
In the definition of $\mC_D$ we use not ``$\max$'' but ``$\min$''. 
The reason becomes apparent in the proof of the following proposition.  To construct the new transverse overtwisted disc $D'$ from $D$ we have added many elliptic points, which may increase the complexity $\mathcal C_D(v)$ of some  $v \in \mathcal{V}_D$ (in particular, $\max \{\mC_{D}(v)\: | \: v \in \mathcal V_D\}$ may increase). 
However, if we look at $v \in \mathcal{V}_D$ realizing the minimal complexity $\mathcal C_D$, deforming at $v$ either reduces $|\mathcal{V}_D|$ or produces a valence one vertex of $G_{--}$ with less complexity.
\end{remark-number}

The following is a key to the proof of Theorem \ref{theorem:main}.

\begin{proposition}
\label{prop:complexity}
Let $D$ be a transverse overtwisted disc with $\left| \mathcal V_D \right| \geq 1$.  
Let $v\in \mathcal V_D$ such that $\mC_D=\mC_{D}(v)$, and $D'$ the transverse overtwisted disc obtained by deforming $D$ at $v$. Then $\mathfrak{C}(D')< \mathfrak{C}(D).$
\end{proposition}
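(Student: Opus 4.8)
The plan is to track what the operation $D\to D'$ (deforming $D$ at $v$) does to the pair $\mathfrak C(D)=(|\mathcal V_D|,\mathcal C_D)$, separating into two cases according to whether the branch $B(v)$ is reduced to the single vertex $v$ or not. First I would set up notation from Section~\ref{sec:niceposition}: $v$ is the chosen valence one non-strongly essential vertex realizing the minimum $\mathcal C_D=\mathcal C_D(v)$, with $m$ positive hyperbolic points $h_1,\dots,h_m$ and one negative hyperbolic point $h_-$ incident to it, the disc $\Delta\subset S_0$ bounded by $b_0$, the feet $x_1,\dots,x_n$ on $\partial'\Delta$, and the copies $x_j^{(i)}$ created in the construction. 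By Proposition~\ref{prop:foliation} the graph $G_{--}(D')=G_{--}(D_{**})$ is obtained from $G_{--}(D)$ by (a) deleting $v$ and its one edge, and (b) replacing each \emph{negative} elliptic foot $x_k\in\partial'\Delta$ by a linear chain $x_k^{(1)}\!-\!\cdots\!-\!x_k^{(m)}$ (with $m-1$ negative hyperbolic points), leaving all other elliptic points and the rest of $G_{--}$ untouched; the positive data (a-arc feet, the bigons of Proposition~\ref{prop:foliation}(3)) do not affect $G_{--}$.

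The key case split is: \textbf{Case 1}, $B(v)=\{v\}$. Then by Definition~\ref{def:B(v)} every vertex adjacent to $v$ along $G_{--}$, other than the one hyperbolic point $h_-$ leading out, is strongly essential, and in fact (by Lemma~\ref{lemma:key}, property \textbf{(P2)}) the feet $x_k$ on $\partial'\Delta$ are never the $\Omega_i$'s, so the chains inserted in (b) attach to \emph{strongly essential} negative elliptic points of $G_{--}$ and therefore the inserted chain vertices are themselves strongly essential (a $b$-arc of $D'$ ending at some $x_k^{(i)}$ projects, via $\mathcal P$, into a neighborhood of the original strongly essential $b$-arc at $x_k$, hence is strongly essential; I would prove this small projection lemma explicitly). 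Consequently no new element is added to $\mathcal V_{D'}$, while $v$ is removed, so $|\mathcal V_{D'}|=|\mathcal V_D|-1$ and $\mathfrak C(D')<\mathfrak C(D)$. (When $m=1$ this is just the exchange move of Remark~\ref{remark on D_*}(1), and the same count applies.)

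\textbf{Case 2}, $B(v)\supsetneq\{v\}$. Here $|\mathcal V_D|$ may stay the same: deleting $v$ can turn a valence two vertex of $B(v)$ into a new valence one vertex. I claim that in this case $\mathcal C_{D'}<\mathcal C_D$, which suffices. The point is to compare $\mathcal C_{D'}(v')$ with $\mathcal C_D=\mathcal C_D(v)$ for the new valence one vertex $v'$ (the neighbor of $v$ in $B(v)$ that was valence two and is now valence one). The branch $B_{D'}(v')$ is contained in (the image in $G_{--}(D')$ of) $B_D(v)\setminus\{v\}$, possibly with some $x_k$ replaced by chains. Crucially, removing the innermost elliptic point $v$ \emph{lowers the nesting level} of every $b$-arc that used to nest over $v$: by construction the disc $\Delta$ for $b_0$ contained no other vertices of $B(v)$ "below'' it that are deleted along with $v$, so a $b$-arc ending at some $w\in B_D(v)$ whose bounding disc $\Delta_w$ met $B_D(v)$ only at $v$ now has nesting level one less; more generally every nesting level drops by at least the contribution of the deleted innermost layer. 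Since the chains inserted for negative feet $x_k$ sit very close to the binding (inside what used to be $\Delta$, hence "inside'' the deleted layer) they only contribute to nesting levels strictly below the ones present in $B_D(v)\setminus\{v\}$, so they do not raise the top nonzero entry of the complexity sequence. Therefore the leading (leftmost, highest) nonzero entry of $\mathcal C_{D'}(v')$ occurs at a strictly smaller index $k$ than that of $\mathcal C_D(v)$, giving $\mathcal C_{D'}(v')<\mathcal C_D(v)=\mathcal C_D$, hence $\mathcal C_{D'}\le\mathcal C_{D'}(v')<\mathcal C_D$ and $\mathfrak C(D')<\mathfrak C(D)$.

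\textbf{Main obstacle.} The delicate point is Case 2: making precise the assertion that deforming at the \emph{minimal-complexity} vertex $v$ strictly decreases the top nonzero entry of the nesting-level sequence, i.e. that the newly introduced negative elliptic points $x_k^{(i)}$ and negative hyperbolic points (from Proposition~\ref{prop:foliation}(3)) all sit at nesting levels strictly below the deleted layer and cannot create a vertex of nesting level $\ge k$ where $k$ is the top index of $\mathcal C_D(v)$. This is exactly why one minimizes rather than maximizes $\mathcal C_D(v)$ over $\mathcal V_D$ — the chosen $\Delta$ is "innermost'', so nothing of $B(v)$ lies strictly inside it, and the surgery only happens inside $\Delta$. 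I would carry this out by a careful bookkeeping argument on the disc $\Delta$: show $\alpha=\partial'\Delta$ meets $B_D(v)$ only in the $x_k$'s (using \textbf{(P2)} and minimality), deduce that the $b$-arcs of $D'$ ending at the new chain vertices all have bounding subdiscs of $\partial'\Delta$ contained in $\mathcal P(\Delta)$, and conclude their nesting levels are bounded by $\mathrm{NL}_D(v)$, which is strictly below the level of any vertex counted in the top entry of $\mathcal C_D(v)$. Everything else — the effect on $|\mathcal V|$ in Case 1, and the fact that $D'$ is again a transverse overtwisted disc — is already supplied by Propositions~\ref{prop:foliation}, \ref{prop:consequence}, \ref{prop:ends} and the construction of Section~\ref{sec:complexity}.
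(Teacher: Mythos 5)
Your overall strategy — split into the case $B(v)=\{v\}$ (count drops) and the case $B(v)\supsetneq\{v\}$ (nesting levels drop), track Type A (new chain) versus Type B (original) vertices — is the same skeleton as the paper's proof, but there is a genuine gap in Case 1 and a discrepancy in Case 2.

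\textbf{Case 1 gap.} You deduce that the feet $x_k\subset\partial'\Delta$ are ``strongly essential negative elliptic points'' from \textbf{(P2)}. That is a non sequitur: \textbf{(P2)} asserts only that the \emph{positive} elliptic points $\Omega_i$ connected to $v$ do not lie on $\partial'\Delta$; it says nothing about whether the $x_k$'s are strongly essential, nor is strong essentiality of the $x_k$'s implied by $B(v)=\{v\}$ (the $x_k$'s need not even lie in $B(v)$, nor be adjacent to $v$ in $G_{--}$). Also, since $v$ has valence one in $G_{--}$, the remark ``every vertex adjacent to $v$ other than $h_-$ is strongly essential'' is vacuous. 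Consequently your claimed ``projection lemma'' has nothing to project to: a $b$-arc of $D'$ at $x_k^{(i)}$ that tracks a $b$-arc of $D$ at $x_k$ is only as strongly essential as the latter, which you haven't established. The paper instead bypasses this entirely: it quotes Proposition~\ref{prop:ends} (which rests on Proposition~\ref{prop:consequence}(ii), that replacing each negative $x_k$ by a linear chain preserves the number of valence-one vertices) to get $|\mathcal V_{D'}|\le|\mathcal V_D|$ unconditionally, and then reduces to the equality case.

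\textbf{Case 2 discrepancy.} Your mechanism for the decrease is that ``removing $v$ lowers the nesting level of every $b$-arc that used to nest over $v$.'' The paper argues differently: for Type~B vertices $x$ with original counterpart $x^*$ it asserts $\mathrm{NL}_{D'}(x)=\mathrm{NL}_D(x^*)$ (unchanged, because the added copies of $A$ sit near the $\Omega_i$'s and do not enter the bounding discs relevant to those vertices), for Type~A vertices $\mathrm{NL}_{D'}(x)\le \mathrm{NL}_D(v)-1$ (since $A\subset\Delta$), and the strict drop in the lexicographic sequence comes simply from deleting $v$ from the count at level $\mathrm{NL}_D(v)$, with all higher entries untouched. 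You would need to reconcile your ``levels drop'' picture with the paper's ``levels stay the same except $v$ disappears'' picture; as written your version leaves open the possibility that some Type~B vertex with $\mathrm{NL}_D(x^*)>\mathrm{NL}_D(v)$ keeps its high level and spoils the leading-entry comparison. The cleaner route is the paper's: establish invariance of Type~B levels, the bound for Type~A levels, and then compare at the single index $\mathrm{NL}_D(v)$.
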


\begin{proof}
By the proof of Proposition~\ref{prop:ends} we have $\left| \mathcal V_{D'} \right| \leq \left| \mathcal V_D \right|$.
If $\left| \mathcal V_{D'} \right| < \left| \mathcal V_D \right|$ then $\mathfrak{C}(D')< \mathfrak{C}(D).$

Assume that $\left| \mathcal V_{D'} \right| = \left| \mathcal V_D \right|$, that is $|B(v)| \geq 2$.  
Let $v' \in B(v)$ be the adjacent vertex to $v \in \mathcal V_D$. 
By Definition~\ref{def:B(v)} and the construction of $D'$ we have $v' \in \mathcal V_{D'}$. 
To compare $\mathcal C_D$ and $\mathcal C_{D'}$  we examine the nesting levels of the vertices in $B(v')$.
Recall that in the construction of $D_*$ (hence $D'$) we have added copies of the leaf box $A$. 
There are two types of vertices $x\in B(v')$:
\begin{description}
\item[Type A] $x$ is a newly introduced negative elliptic point at a foot of the legs of $A$. (i.e., $x$ is a negative elliptic point of the form $x_{j}^{(i)}$ in Section~\ref{sec:niceposition}.)
\item[Type B] $x$ is a vertex that comes from a  vertex, $x^* \in B(v)$. 
\end{description}

The original leaf box $A$ for $D$ is contained in the disc $\Delta$ co-bounded by the b-arc $b_0$ ending at $v$. So for a vertex $x \in B(v')$ of Type A, we have
$$\NL_{D'}(x) \leq \NL_{D}(v) -1.$$ 
For a vertex $x \in B(v')$ of Type B with the corresponding vertex $x^{*} \in B(v)$, the added copies of $A$ do not affect the nesting level, i.e., 
$$\NL_{D'}(x) = \NL_{D}(x^{*}).$$
To get $D'$ we have removed the vertex $v \in G_{--}(D)$ of the nesting level $\NL_{D}(v)$, so these observations imply: 
$$\mC_{D'} \leq \mC_{D'}(v') < \mC_{D}(v) = \mC_D.$$  
\end{proof}

\section{Proofs of Theorem \ref{theorem:main} \ and Corollary \ref{cor:tight}}
\label{sec:proof}

\begin{proof}[Proof of Theorem \ref{theorem:main}]
Let $(S,\phi)$ be a planar open book supporting an overtwisted contact structure.
Take a transverse overtwisted disc $D \subset M_{(S,\phi)}$.
By \cite[Theorem 3.2]{ik2} we may assume that $\F(D)$ is an essential open book foliation.

If $\F(D)$ contains only one negative elliptic point $v$ we have shown  in \cite[Theorem 6.2, Claim 6.3]{ik2} that $v$ is strongly essential, thus the condition {\bf (SE1)} is satisfied. 

If $\F(D)$ contains more than one negative elliptic point, then by Proposition \ref{prop:complexity} we can find a transverse overtwisted disc $D'$ with $\mathfrak{C}(D')=(0,(\ldots,0))$, i.e., {\bf (SE1)} is satisfied.
\end{proof}

\begin{proof}[Proof of Corollary \ref{cor:tight}]

Suppose that a planar open book $(S,\phi)$ supports an overtwisted contact structure.
By Theorem \ref{theorem:main} there exists a transverse overtwisted disc $D$ with the property {\bf (SE1)}. Let $v \in G_{--}(D)$ be a valence $\leq 1$ vertex and assume that $v$ lies on the binding component $C$. Lemma \ref{lemma:estimate} and {\bf (SE1)} imply that $c(\phi,C) \leq 1$, which is a contradiction. 
\end{proof}

\section{Questions and comments}

As noted in Remark~\ref{remark on D_*}, our construction of the discs $D\to D_*\to D_{**}$ discussed in Sections \ref{sec:niceposition}--\ref{sec:complexity} is valid for general surfaces $F$ in planar open books. 
We call the operation removing a non-strongly essential valence one vertex $v$ {\em deforming $F$ at $v$}. 

An additional argument similar to the one for the exchange move in  \cite{ik3} shows that if $F'$ is a surface obtained by deforming $F$ at $v$, the braids $\partial F$ and $\partial F'$ are {\em transversely isotopic} (if $F$ has boundary) and $F$ and $F'$ are isotopic in $M_{(S,\phi)}$.

This observation, combined with the complexity $\mathfrak{C}(D)$ defined in Section~\ref{sec:complexity}, gives the following result concerning a ``nice'' position of general Seifert surfaces in planar open books. 

\begin{theorem}
Let $F$ be a Seifert surface of a closed braid $L$ with respect to  a planar open book. Then there exists a surface $F'$ isotopic to $F$ such that $\partial F'$ is transversely isotopic to $L$ and 
\begin{description}
\item [(SE1$'$)] all the valence $\leq 1$ vertices of $G_{--}(F')$ are strongly essential.
\end{description}
\end{theorem}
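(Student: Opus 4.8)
The plan is to mimic the proof of Theorem~\ref{theorem:main} almost verbatim, replacing the transverse overtwisted disc $D$ with a general Seifert surface $F$ and discarding the parts of the argument that relied specifically on the overtwisted-disc structure. First I would, exactly as in Section~\ref{sec:proof}, invoke \cite[Theorem 3.2]{ik2} to arrange that $\F(F)$ is essential (this step does not use that $F$ is a disc). If all valence $\leq 1$ vertices of $G_{--}(F)$ are already strongly essential we are done, so assume there is a non-strongly essential valence one vertex $v$; equivalently $\mathcal V_F \neq \emptyset$, where $\mathcal V_F$ is defined exactly as $\mathcal V_D$ in Definition~\ref{def:B(v)}.

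Next I would run the construction $F \to F_* \to F_{**} \to F'$ of Sections~\ref{sec:niceposition}--\ref{sec:complexity}, \emph{deforming $F$ at $v$}. As observed in Remark~\ref{remark on D_*}(2) and reiterated in Section~\ref{sec:complexity}, the passage $F\to F_*\to F_{**}$ only uses that $v$ is a non-strongly essential valence one vertex of $G_{--}(F)$, never that $F$ is a disc; the movie-presentation steps (Steps 1--4), the planarity-driven property {\bf (P2)} (Lemma~\ref{lemma:key}), and the foliation analysis of Proposition~\ref{prop:foliation} go through unchanged, producing a surface $F_*$ with a non-essential b-arc $b_0$ and then $F_{**}$ with $v$ removed. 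The only point needing a (minor) adaptation is the final step $F_{**}\to F'$: in the disc case one applied \cite[Section~4]{ik1-1} to turn $F_{**}$ into a transverse overtwisted disc using the failure of the Bennequin--Eliashberg inequality, but here we simply \emph{omit} that step and set $F'=F_{**}$, since we are not trying to produce an overtwisted disc, only a surface satisfying {\bf (SE1$'$)}. Isotopy of $F$ and $F'$ in $M_{(S,\phi)}$ and transverse isotopy of the boundaries follow from the remark preceding the statement (the exchange-move-type argument of \cite{ik3}), which is precisely what that paragraph asserts.

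The induction is then driven by the same complexity $\mathfrak C(F)=(|\mathcal V_F|,\mathcal C_F)$ of Section~\ref{sec:complexity}; I would check that Propositions~\ref{prop:consequence} and \ref{prop:complexity} hold with $D$ replaced by $F$ throughout. Proposition~\ref{prop:consequence}(ii) (that $G_{--}(F_*)$ is related to $G_{--}(F)$ by the local blow-up of Figure~\ref{fig:blowup}(b)) and the nesting-level bookkeeping in the proof of Proposition~\ref{prop:complexity} are purely local statements about $G_{--}$ and its b-arcs, so they carry over; part (iii) of Proposition~\ref{prop:consequence} about self-linking numbers is simply not needed for the $F'=F_{**}$ version. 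Hence deforming $F$ at a vertex $v$ realizing $\mathcal C_F=\mathcal C_F(v)$ strictly decreases $\mathfrak C(F)$, and since $\mathfrak C$ takes values in a well-ordered set, after finitely many deformations we reach a surface $F'$ with $\mathfrak C(F')=(0,(\dots,0))$, i.e.\ with $\mathcal V_{F'}=\emptyset$, which is exactly {\bf (SE1$'$)}.

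The main obstacle, and the one place I would write out carefully rather than cite, is verifying that $F'=F_{**}$ really is an embedded surface with a legitimate open book foliation and the expected $G_{--}$-graph when $F$ is not a disc --- that is, re-reading the proofs of Proposition~\ref{prop:foliation} and Proposition~\ref{prop:consequence}(i,ii) to confirm no step secretly used $D\cong D^2$ (for instance in ruling out c-circles or in identifying the copies of the leaf box $A$ under the monodromy near the binding). I expect this to be routine --- the constructions are manifestly local around $v$, $\partial'\Delta$, and the $x_j$'s --- but it is the natural point of failure and deserves an explicit remark that the arguments of Sections~\ref{sec:niceposition}--\ref{sec:foliation} are insensitive to the global topology of $F$.
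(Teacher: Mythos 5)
Your proposal matches the paper's own treatment: the theorem is proved there via the short paragraph preceding its statement, which observes (exactly as you do) that the construction $F \to F_* \to F_{**}$, the planarity-driven property \textbf{(P2)}, and the complexity induction of Section~\ref{sec:complexity} all go through for a general Seifert surface, that one takes $F' = F_{**}$ and omits the final Bennequin--Eliashberg step used only to recover a transverse overtwisted disc, and that the transverse isotopy of $\partial F$ and isotopy of $F$ come from the exchange-move-type argument of \cite{ik3}. Your explicit flag that one should re-verify Proposition~\ref{prop:foliation} and Proposition~\ref{prop:consequence}(i),(ii) for non-disc $F$ (e.g.\ that copies of the leaf box $A$ containing c-circles cause no trouble, and that $G_{--}$ need not be a tree) is a sensible addendum, though the paper treats this as evident from the locality of the construction.
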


The condition {\bf (SE1$'$)} only concerns the vertices of valence $\leq 1$. One may ask whether one can further modify and put the surface $F$ while preserving the transverse knot type of its boundary so that:
\begin{description}
\item [(SE)] all the vertices of $G_{--}(F)$ are strongly essential.  
\end{description}

We close the paper with a question:
\begin{question}
If the FDTC $c(\phi, C) >1$ for all the boundary  components $C \subset \partial S$ then does the open book $(S, \phi)$ support a tight contact structure? 
\end{question}
In our whole arguments, we use the planar assumption only to guarantee the property {\bf (P2)} of Lemma~ \ref{lemma:key}, which is used to deform $D$ at $v$ (cf. Remark~\ref{rmk non-planar}).

\section*{Acknowledgements}
The authors would like to thank Joan Birman, Bill Menasco for useful conversations and  John Etnyre for informing us of an example that is included in Remark \ref{rem:bp}.
They also thank the referee for careful reading and numerous comments and suggestions. 
T.I. was supported by JSPS Research Grant-in-Aid for Research Activity Start-up, Grant Number 25887030.  
K.K. was partially supported by NSF grant  DMS-1206770.

\end{document}